\newtheorem{thm}{Theorem}[section]
\newtheorem{proposition}{Proposition}[section]
\newtheorem{definition}{Definition}[section]
\newtheorem{corollary}{Corollary}[section]
\newtheorem{lemma}{Lemma}[section]
\newcommand{\rad}{\operatorname{rad}}
\begin{document}

\title{Cell algebra structure on generalized Schur algebras}

\author{ Robert May\\
Department of Mathematics and Computer Science\\
Longwood University\\
201 High Street, Farmville, VA 23909\\
rmay@longwood.edu }
\maketitle

\section {Introduction}

     A family of ``generalized Schur algebras'' were first introduced in \cite{MA} and \cite{May}.  In \cite{May2} the left and right generalized Schur algebras were shown to be ``double coset algebras''.  In \cite{May} and \cite{May4} a stratification of these algebras was given leading to a parameterization, in most cases, of their irreducible representations.  In this paper we obtain cell algebra structures for these algebras in the sense of \cite{May3}.  (The Cell algebras of \cite{May3}  coincide with the standardly based algebras previously introduceed by Du and Rui in \cite{DuRui}.)  The properties of cell algebras combined with the parameterization of the irreducible representations leads to a more concrete description of all these irreducibles.  In certain cases these algebras are shown to be quasi-hereditary.  
     
In section 2 we review the definition and properties of cell algebras as presented in \cite{May3}.  In section 3 we describe the cell bases found in \cite{May3} for a family of semigroups including the full transformation semigroups $\mathcal{T}_r$  and the rook semigroups $\Re_r$.  In section 4 we give cell bases for the left and right generalized Schur algebras corresponding to these semigroups.  Finally, in section 5 we use the cell algebra structure to describe the irreducible representations of these algebras and to determine when they are quasi-hereditary.

\section {Cell algebra structures}  

In this section we review, without proofs, the definition and properties of cell algebras as presented in \cite{May3}.  (These algebras were previously studied as ``standardly based algebras'' by Du and Rui in \cite{DuRui}.)   Let $R$ be a commutative integral domain with unit 1 and let $A$ be an associative, unital $R$-algebra.  Let $\Lambda $ be a finite set with a partial order $ \leqslant $ and for each $\lambda  \in \Lambda $ let $L\left( \lambda  \right),R\left( \lambda  \right)$ be finite sets of ``left indices'' and ``right indices''. Assume that for each $\lambda  \in \Lambda ,s \in L\left( \lambda  \right),{\text{ and }}t \in R\left( \lambda  \right)$ there is an element $\,_s C_t ^\lambda   \in A$ such that the map $\left( {\lambda ,s,t} \right) \mapsto \,_s C_t ^\lambda  $ is injective and
\[ C = \left\{ {_s C_t ^\lambda  :\lambda  \in \Lambda ,s \in L(\lambda ),t \in R\left( \lambda  \right)} \right\} \]
is a free $R$-basis for $A$.  Define $R$-submodules of $A$ by 
\[A^\lambda   = R{\text{ - span of }}\left\{ {_s C_t ^\mu  :\mu  \in \Lambda ,\mu  \geqslant \lambda ,s \in L(\mu ),t \in R\left( \mu  \right)} \right\}\]
 and
\[ \hat A^\lambda   = R{\text{ - span of }}\left\{ {_s C_t ^\mu  :\mu   \in \Lambda ,\mu  > \lambda ,s \in L(\mu ),t \in R\left( \mu  \right)} \right\} . \]

\begin {definition} \label {d2.1}  Given $A,\Lambda,C$, $A$ is a cell algebra with poset $\Lambda$ and cell basis $C$ if 
    \begin{description}
     \item[i] For any $a \in A,\lambda  \in \Lambda ,{\text{ and }}s,s' \in L\left( \lambda  \right)$, there exists $r_L  = r_L \left( {a,\lambda ,s,s'} \right) \in R$ such that, for any $t \in R\left( \lambda  \right)$, $a \cdot \,_s C_t ^\lambda   = \sum\limits_{s' \in L\left( \lambda  \right)} {r_L  \cdot \,_{s'} C_t ^\lambda  } \, \, \bmod \hat A^\lambda  $, and
      \item[ii] For any $a \in A,\lambda  \in \Lambda ,{\text{ and }}t,t' \in R\left( \lambda  \right)$, there exists  $r_R  = r_R \left( {a,\lambda ,t,t'} \right) \in R$ such that, for any $s \in L\left( \lambda  \right)$, $_s C_t ^\lambda   \cdot a = \sum\limits_{t' \in R\left( \lambda  \right)} {r_R  \cdot \,_s C_{t'} ^\lambda  } \, \,  \bmod \hat A^\lambda  $ .
     \end{description}
\end {definition}

     Consider a fixed cell algebra $A$ with poset $\Lambda$ and cell basis $C$.

\begin {lemma} \label {l2.1} 
    \text{   } 
    \begin{enumerate} [\upshape(a)]
       \item  $A^\lambda  $ and $\hat A^\lambda  $ are two sided ideals in $A$ for any $\lambda  \in \Lambda $.
       \item  For $\lambda  \in \Lambda ,t,t' \in R\left( \lambda  \right),s,s' \in L\left( \lambda  \right)$,  $r_L \left( {_{s'} C_t ^\lambda  ,\lambda ,s,s'} \right) = r_R \left( {_s C_{t'} ^\lambda  ,\lambda ,t,t'} \right)$.                     
        \item Given $\lambda  \in \Lambda ,t \in R\left( \lambda  \right),s \in L\left( \lambda  \right)$, there exists $r_{st}  \in R$ such that for any $s' \in L\left( \lambda  \right),t' \in R\left( \lambda  \right)$ we have $_{s'} C_t ^\lambda  \,\,_s C_{t'} ^\lambda   = r_{st} \,\,_{s'} C_{t'} ^\lambda  \bmod \hat A^\lambda  $. In fact  	
$r_{st}  = r_L \left( {_{s'} C_t ^\lambda  ,\lambda ,s,s'} \right) = r_R \left( {_s C_{t'} ^\lambda  ,\lambda ,t,t'} \right)$.
     \end{enumerate}
\end {lemma}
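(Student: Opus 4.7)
The plan has two parts. For (a), I would argue directly from conditions (i) and (ii) of Definition~\ref{d2.1}: for any $\mu \geq \lambda$, every basis element $_{s'}C_t^\mu$ lies in $A^\lambda$, and $\hat A^\mu \subseteq A^\lambda$ because $\nu > \mu \geq \lambda$ implies $\nu \geq \lambda$. Property (i) then shows that $a \cdot {}_{s'} C_t^\mu \in A^\lambda$ for every $a \in A$, making $A^\lambda$ a left ideal, and property (ii) makes it a right ideal. The same argument with strict inequalities handles $\hat A^\lambda$.

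For (b) and (c), the strategy is to compute the product $P := {}_{s'} C_t^\lambda \cdot {}_s C_{t'}^\lambda$ in two different ways and then match coefficients. Applying (ii) with $a = {}_s C_{t'}^\lambda$ yields
\[
P \equiv \sum_{t'' \in R(\lambda)} r_R\bigl({}_s C_{t'}^\lambda, \lambda, t, t''\bigr) \cdot {}_{s'} C_{t''}^\lambda \pmod{\hat A^\lambda},
\]
while applying (i) with $a = {}_{s'} C_t^\lambda$ yields
\[
P \equiv \sum_{s'' \in L(\lambda)} r_L\bigl({}_{s'} C_t^\lambda, \lambda, s, s''\bigr) \cdot {}_{s''} C_{t'}^\lambda \pmod{\hat A^\lambda}.
\]
The structural fact I would rely on is that $A^\lambda / \hat A^\lambda$ is free over $R$ on the images of $\{{}_u C_v^\lambda : u \in L(\lambda),\, v \in R(\lambda)\}$. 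Equating coefficients in this basis, the first sum only supports elements with right index $t'$ and the second only supports elements with left index $s'$, so each expansion must reduce to a single term at $(u,v)=(s',t')$. This forces all off-diagonal coefficients to vanish and gives $r_L\bigl({}_{s'} C_t^\lambda,\lambda,s,s'\bigr) = r_R\bigl({}_s C_{t'}^\lambda,\lambda,t,t'\bigr)$, which is (b).

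For (c), the key additional observation is that $r_R\bigl({}_s C_{t'}^\lambda,\lambda,t,t'\bigr)$ makes no reference to $s'$ while $r_L\bigl({}_{s'} C_t^\lambda,\lambda,s,s'\bigr)$ makes no reference to $t'$; since they are equal, the common value depends only on $s$ and $t$, and we may call it $r_{st}$. Either expansion above then collapses to $P \equiv r_{st} \cdot {}_{s'} C_{t'}^\lambda \pmod{\hat A^\lambda}$, as required. The main obstacle I anticipate is not computational but structural: one has to make the passage to $A^\lambda/\hat A^\lambda$ rigorous and verify that the claimed family really descends to a free $R$-basis there. This is immediate from the freeness of $C$ as an $R$-basis of $A$, but it is the pivot that legitimizes the coefficient-matching and therefore both (b) and (c).
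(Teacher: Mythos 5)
Your argument is correct: part (a) follows exactly as you say from (i), (ii) and transitivity of the order, and the double expansion of ${}_{s'}C_t^\lambda\cdot{}_sC_{t'}^\lambda$ with coefficient matching in the free basis of $A^\lambda/\hat A^\lambda$ (legitimate because $C$ is a free $R$-basis and $\hat A^\lambda$ is spanned by basis elements distinct from the ${}_uC_v^\lambda$) yields (b) and, via the independence of the two coefficient functions from $t'$ and $s'$ respectively, (c). The paper itself states this lemma without proof, citing \cite{May3}, and your proof is exactly the standard argument used there and in the cellular/standardly based algebra literature, so there is nothing to add.
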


     By lemma \ref {l2.1}, part (a), $A/\hat A^\lambda  $ is a unital $R$-algebra and $A^\lambda  /\hat A^\lambda  $ is a two sided ideal in $A/\hat A^\lambda  $.  Observe that as an $R$-module $A^\lambda  /\hat A^\lambda  $ is free with a basis $\left\{ {_s C_t ^\lambda   + \hat A^\lambda  :s \in L\left( \lambda  \right),t \in R\left( \lambda  \right)} \right\}$.

     For a fixed $t \in R\left( \lambda  \right)$, define $\,_L C_t ^\lambda $ as the free $R$-submodule of $A^\lambda  /\hat A^\lambda $ with basis $\left\{ {_s C_t ^\lambda   + \hat A^\lambda  :s \in L\left( \lambda  \right)} \right\}$.  By property (i), $\,_L C_t ^\lambda  $ is a left $A$-module and $\,_L C_t ^\lambda   \cong \,_L C_{t'} ^\lambda $ as left $A$-modules for any $t,t' \in R\left( \lambda  \right)$.  Evidently, as left $A$-modules we have $A^\lambda  /\hat A^\lambda   \cong \mathop  \bigoplus \limits_{t \in R(\lambda )} \,\,_L C_t ^\lambda  $.  

\begin {definition} \label {d2.2}  The left cell module for $\lambda $ is the left $A$-module $\,_L C^\lambda  $ defined as follows:  Take the free $R$-module with a basis $\left\{ {_s C^\lambda  :s \in L\left( \lambda  \right)} \right\}$ and define the left action of $A$ by $a \cdot \,_s C^\lambda   = \sum\limits_{s' \in L\left( \lambda  \right)} {r_L \left( {a,\lambda ,s,s'} \right)} \,\,_{s'} C^\lambda  $ for $a \in A$. 
\end{definition}

     For any $t \in R\left( \lambda  \right)$, $\,_s C^\lambda   \mapsto \,_s C_t ^\lambda   + \hat A^\lambda  $ gives a left $A$-module isomorphism $\phi _t :\,_L C^\lambda   \to \,_L C_t ^\lambda . $  Then $A^\lambda  /\hat A^\lambda   \cong \mathop  \bigoplus \limits_{t \in R(\lambda )} \,\,_L C_t ^\lambda  $ is isomorphic to the direct sum of $\left| {R\left( \lambda  \right)} \right|$ copies of $\,_L C^\lambda  $.

      In a parallel way, for a fixed $s \in L\left( \lambda  \right)$, define $\,_s C_R ^\lambda  $ as the free $R$-module with basis $\left\{ {_s C_t ^\lambda   + \hat A^\lambda  :t \in R\left( \lambda  \right)} \right\}$, an $R$-submodule of $A^\lambda  /\hat A^\lambda  $.  By property (ii), $\,_s C_R ^\lambda  $ is a right $A$-module and $\,_s C_R ^\lambda   \cong \,_{s'} C_R ^\lambda  $ as right $A$-modules for any $s,s' \in L\left( \lambda  \right)$.  As right $A$-modules we have $A^\lambda  /\hat A^\lambda   \cong \mathop  \bigoplus \limits_{s \in L\left( \lambda  \right)} \,\,_s C_R ^\lambda  $.  

\begin {definition} \label {d2.3}  The right cell module for $\lambda $ is the right $A$-module $C_R ^\lambda  $ defined as follows:  Take the free $R$-module with a basis $\left\{ {C_t ^\lambda  :t \in R\left( \lambda  \right)} \right\}$ and define the right action of $A$ by $C_t ^\lambda   \cdot a = \sum\limits_{t' \in R\left( \lambda  \right)} {r_R \left( {a,\lambda ,t,t'} \right)} \,C_{t'} ^\lambda  $ for $a \in A$. 
\end{definition}

     For any $s \in L\left( \lambda  \right)$, $C_t ^\lambda   \mapsto \,_s C_t ^\lambda   + \hat A^\lambda  $ gives a right $A$-module isomorphism $_s \phi :C_R ^\lambda   \to \,_s C_R ^\lambda  .$  Then $A^\lambda  /\hat A^\lambda   \cong \mathop  \bigoplus \limits_{s \in L(\lambda )} \,\,_s C_R ^\lambda  $ is isomorphic to the direct sum of $\left| {L\left( \lambda  \right)} \right|$ copies of $C_R ^\lambda  $.

     For each $\lambda  \in \Lambda $ there is an $R$-bilinear map $ \left\langle \,\,,\,\,\right\rangle :\,C_R ^\lambda   \times \,_L C^\lambda \to R$ defined on basis elements by $\left\langle {C_t ^\lambda  ,\,_s C^\lambda  } \right\rangle  = r_{st} $, where $r_{st}  \in R$ is as given in lemma \ref {l2.1}.

\begin {definition} \label {d3.1}  The right $C_R ^\lambda  $ radical is \[ \rad \left( {C_R ^\lambda  } \right) = \left\{ {x \in C_R ^\lambda  :\left\langle {x,y} \right\rangle  = 0{\text{ for all }}y \in \,_L C^\lambda  } \right\} . \]
The left $\,_L C^\lambda  $ radical is
\[ \rad \left( {\,_L C^\lambda  } \right) = \left\{ {y \in \,_L C^\lambda  :\left\langle {x,y} \right\rangle  = 0{\text{ for all }}x \in C_R ^\lambda  } \right\} . \]
\end{definition}

     The radical $\rad \left( {C_R ^\lambda  } \right)$ is a right $A$-submodule of $C_R ^\lambda  $  and  $\rad \left( {\,_L C^\lambda  } \right)$ is a left $A$-submodule of $\,_L C^\lambda  $.

\begin {definition} \label {d3.2}  $D_R ^\lambda   = \frac{{C_R ^\lambda  }}
{{ \rad \left( {C_R ^\lambda  } \right)}}$,  $\,_L D^\lambda   = \frac{{\,_L C^\lambda  }}
{{ \rad \left( {\,_L C^\lambda  } \right)}}$  .
\end{definition}

     Then $D_R ^\lambda  $ is a right $A$-module and $\,_L D^\lambda  $ is a left $A$-module.  The following lemma follows at once from the definitions.

\begin {lemma} \label {l3.4}  The following conditions are equivalent: \\
(i) $D_R ^\lambda   = 0$; \   (ii) $\rad \left( {C_R ^\lambda  } \right) = C_R ^\lambda  $;   \       (iii) $\left\langle {x,y} \right\rangle  = 0{\text{ for all }}x \in C_R ^\lambda  \,,\,y \in \,_L C^\lambda  $; (iv) $\rad \left( {\,_L C^\lambda  } \right) = \,_L C^\lambda  $; and (v) $\,_L D^\lambda   = 0$.
\end{lemma}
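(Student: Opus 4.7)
The plan is to observe that Lemma \ref{l3.4} is purely a matter of unpacking Definitions \ref{d3.1} and \ref{d3.2}; no structural property of the cell basis is used. I would prove the equivalences in the cyclic order (i) $\Leftrightarrow$ (ii) $\Leftrightarrow$ (iii) $\Leftrightarrow$ (iv) $\Leftrightarrow$ (v), each step being immediate.

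First, the equivalence (i) $\Leftrightarrow$ (ii) is built into Definition \ref{d3.2}: since $D_R^\lambda = C_R^\lambda / \rad(C_R^\lambda)$ as an $R$-module, the quotient is zero exactly when the submodule equals the whole module. The same observation, applied on the left, gives (iv) $\Leftrightarrow$ (v).

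Next, for (ii) $\Leftrightarrow$ (iii) I would simply recall that by Definition \ref{d3.1} the element $x \in C_R^\lambda$ lies in $\rad(C_R^\lambda)$ precisely when $\langle x, y\rangle = 0$ for all $y \in {}_L C^\lambda$; hence $\rad(C_R^\lambda) = C_R^\lambda$ is literally the assertion that every $x$ has this property, which is (iii). In exactly the same fashion, reading Definition \ref{d3.1} on the other side yields (iii) $\Leftrightarrow$ (iv): the condition $\langle x, y\rangle = 0$ for all $x,y$ is equivalent to every $y \in {}_L C^\lambda$ lying in $\rad({}_L C^\lambda)$.

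There is no real obstacle here; the only thing to be careful about is to invoke the bilinear pairing $\langle\,,\,\rangle$ on the correct side each time, so that the two radicals (right and left) are seen to vanish together with the pairing itself. Once the four implications above are chained, all five conditions are equivalent.
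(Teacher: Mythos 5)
Your proposal is correct and matches the paper's treatment: the paper gives no written proof, remarking only that the lemma ``follows at once from the definitions,'' and your argument is exactly that unpacking — $D_R^\lambda = C_R^\lambda/\rad(C_R^\lambda)$ vanishes iff the radical is everything, which by Definition \ref{d3.1} is the statement that the pairing vanishes identically, and symmetrically on the left.
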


\begin {definition} \label {d3.3}  $\Lambda _0  = \left\{ {\lambda  \in \Lambda :\left\langle {x,y} \right\rangle  \ne 0{\text{ for some }}x \in C_R ^\lambda  ,y \in \,_L C^\lambda  } \right\}$.
\end{definition}

     Evidently, $\lambda  \in \Lambda _0  \Leftrightarrow D_R ^\lambda   \ne 0 \Leftrightarrow \,_L D^\lambda   \ne 0$.

     When $R = k$ is a field, one can characterize the irreducible modules in a cell algebra in terms of the set $\Lambda _0 $. 

\begin {proposition} \label {p4.1}  Let $R = k$ be a field and take $\lambda  \in \Lambda _0 $.  Then
  \begin{enumerate} [\upshape(a)]    
	\item $D_R ^\lambda  $ is an irreducible right $A$-module.
	\item $\rad \left( {C_R ^\lambda  } \right)$ is the unique maximal right submodule in $C_R ^\lambda  $.
	\item $\,_L D^\lambda  $ is an irreducible left $A$-module.
	\item $\rad \left( {\,_L C^\lambda  } \right)$ is the unique maximal left submodule in $\,_L C^\lambda  $.
     \end{enumerate}
\end{proposition}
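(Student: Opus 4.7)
The plan is to prove parts (a) and (b) together via a single key observation, and then deduce (c) and (d) by the mirror argument. The key observation is:

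\emph{If $\lambda \in \Lambda_0$ and $x \in C_R^\lambda \setminus \rad(C_R^\lambda)$, then the cyclic right $A$-submodule $xA$ equals $C_R^\lambda$.}

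To establish this, first I would read off the right action of certain distinguished elements of $A$ on the cell module from Lemma~\ref{l2.1}(c). That lemma gives $\,_{s'}C_t^\lambda \cdot \,_s C_{t'}^\lambda \equiv r_{st}\,\,_{s'}C_{t'}^\lambda \pmod{\hat A^\lambda}$, where $r_{st} = \langle C_t^\lambda, \,_s C^\lambda\rangle$; transporting this relation along the isomorphism $\,_{s'}\phi : C_R^\lambda \to \,_{s'}C_R^\lambda$ (and noting the output is independent of $s'$) yields, in the abstract module $C_R^\lambda$,
\[ C_t^\lambda \cdot \,_s C_{t'}^\lambda \;=\; \langle C_t^\lambda, \,_s C^\lambda\rangle \, C_{t'}^\lambda. \]
Extending by $k$-bilinearity, $x \cdot \,_s C_{t'}^\lambda = \langle x, \,_s C^\lambda\rangle \, C_{t'}^\lambda$ for every $x \in C_R^\lambda$.

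Now, given $x \notin \rad(C_R^\lambda)$, there exists $y \in \,_L C^\lambda$ with $\langle x, y\rangle \neq 0$; expanding $y$ in the basis $\{\,_s C^\lambda : s \in L(\lambda)\}$ forces $\alpha := \langle x, \,_s C^\lambda\rangle \neq 0$ for some $s$. Since $k$ is a field, $\alpha$ is a unit, so for every $t' \in R(\lambda)$ we have $C_{t'}^\lambda = \alpha^{-1}\,(x \cdot \,_s C_{t'}^\lambda) \in xA$, hence $xA = C_R^\lambda$. From this, any right submodule $M$ of $C_R^\lambda$ not contained in $\rad(C_R^\lambda)$ must equal $C_R^\lambda$, so $\rad(C_R^\lambda)$ is the unique maximal right submodule, which is part (b); note $\rad(C_R^\lambda) \neq C_R^\lambda$ by Lemma~\ref{l3.4} since $\lambda \in \Lambda_0$. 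Passing to the quotient, $D_R^\lambda$ is nonzero and admits no nontrivial submodule, giving (a). Parts (c) and (d) follow from the same argument with sides swapped: the analogous formula is $\,_{s'}C_t^\lambda \cdot \,_s C^\lambda = \langle C_t^\lambda, \,_s C^\lambda\rangle \,_{s'} C^\lambda$ in $\,_L C^\lambda$, which shows that any $y \notin \rad(\,_L C^\lambda)$ satisfies $Ay = \,_L C^\lambda$.

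The main obstacle is bookkeeping rather than content: Lemma~\ref{l2.1}(c) mixes both sets of indices, and some care is needed in verifying that $\,_s C_{t'}^\lambda$ acts on the basis element $C_t^\lambda$ of $C_R^\lambda$ with coefficient $r_{st}$ at $C_{t'}^\lambda$ and zero at every other $C_{t''}^\lambda$ (this follows because the right-hand side of Lemma~\ref{l2.1}(c) is supported only at $\,_{s'}C_{t'}^\lambda$). Once this identification is in hand, the whole proposition collapses to the one-line cyclicity argument above.
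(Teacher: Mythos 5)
Your argument is correct: the coefficient extraction from Lemma \ref{l2.1}(c) (using that $\{\,_{s}C_{t}^\lambda + \hat A^\lambda\}$ is a basis of $A^\lambda/\hat A^\lambda$) gives $x \cdot \,_s C_{t'}^\lambda = \langle x, \,_s C^\lambda\rangle C_{t'}^\lambda$, and the resulting cyclicity of any $x \notin \rad(C_R^\lambda)$ yields (a)--(d) exactly as you say. This is essentially the same Graham--Lehrer-style argument that the paper defers to \cite{May3}, so no further comparison is needed.
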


The modules $\left\{ {D_R ^\lambda  :\lambda  \in \Lambda _0 } \right\}$ are shown to be absolutely irreducible and pairwise inequivalent and similarly for $\left\{ {\,_L D^\lambda  :\lambda  \in \Lambda _0 } \right\}$. 

A major result of \cite{May3} is the following:

\begin {thm} \label {t4.1}  Assume $R = k$ is a field.  Then $\left\{ {D_R ^\mu  :\mu  \in \Lambda _0 } \right\}$ is a complete set of pairwise inequivalent irreducible right $A$-modules and $\left\{ {\,_L D^\mu  :\mu  \in \Lambda _0 } \right\}$ is a complete set of pairwise inequivalent irreducible left $A$-modules.
\end{thm}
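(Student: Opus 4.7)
The plan is to prove completeness of the family $\{D_R^\mu : \mu \in \Lambda_0\}$, since the absolute irreducibility and pairwise inequivalence have already been cited. The left-module case is entirely parallel, so I focus on the right modules. The strategy uses the ideal filtration from Lemma \ref{l2.1}(a) to locate a ``top'' cell layer on which a given irreducible module lives, and then uses the multiplication rule from Lemma \ref{l2.1}(c) to show that this layer must lie in $\Lambda_0$.

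Let $V$ be an irreducible right $A$-module. Since $A$ is unital, $V = V\cdot A \neq 0$, and since $A$ is spanned by the cell basis, the set $\{\lambda \in \Lambda : V \cdot A^\lambda \neq 0\}$ is nonempty. Choose $\lambda$ maximal in this set (possible since $\Lambda$ is finite). Because $\hat A^\lambda = \sum_{\mu > \lambda} A^\mu$, maximality forces $V\cdot \hat A^\lambda = 0$, while irreducibility together with $V \cdot A^\lambda \neq 0$ gives $V\cdot A^\lambda = V$. Pick $v \in V$ and indices $s \in L(\lambda)$, $t \in R(\lambda)$ with $v \cdot {}_sC_t^\lambda \neq 0$, and define
\[
\psi : C_R^\lambda \to V, \qquad C_{t'}^\lambda \longmapsto v\cdot {}_sC_{t'}^\lambda .
\]
Property (ii) of Definition \ref{d2.1}, together with $V \cdot \hat A^\lambda = 0$, shows that $\psi$ is a right $A$-module homomorphism; it is nonzero by construction and so surjective by the irreducibility of $V$.

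The crux is to show $\lambda \in \Lambda_0$. Suppose not; then by Lemma \ref{l2.1}(c) every structure constant $r_{st}$ vanishes, so any product ${}_uC_v^\lambda \cdot {}_{u'}C_{v'}^\lambda$ lies in $\hat A^\lambda$. Combining this with the fact that $\hat A^\lambda$ is a two-sided ideal containing every $A^\mu$ with $\mu > \lambda$, one obtains $A^\lambda \cdot A^\lambda \subseteq \hat A^\lambda$. Then
\[
V = V\cdot A^\lambda = V\cdot A^\lambda \cdot A^\lambda \subseteq V\cdot \hat A^\lambda = 0,
\]
a contradiction. So $\lambda \in \Lambda_0$, and Proposition \ref{p4.1}(b) says $\rad(C_R^\lambda)$ is the unique maximal submodule of $C_R^\lambda$. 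Since $\psi$ is nonzero and $V$ is simple, $\ker\psi$ is a proper submodule and hence contained in $\rad(C_R^\lambda)$; the induced surjection $D_R^\lambda \to V$ is a nonzero map between simples (Proposition \ref{p4.1}(a)) and therefore an isomorphism.

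I expect the nilpotence step establishing $\lambda \in \Lambda_0$ to be the main technical point: it is the only place where one really uses the internal multiplicative structure of the cell basis beyond the existence of the ideal filtration, and it is what forces the indexing set of irreducibles down from $\Lambda$ to $\Lambda_0$.
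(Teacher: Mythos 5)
Your argument is correct and is the standard one for cell/cellular algebras: locate a maximal $\lambda$ with $V\cdot A^\lambda \neq 0$, build the nonzero homomorphism from $C_R^\lambda$, rule out $\lambda \notin \Lambda_0$ via $A^\lambda\cdot A^\lambda \subseteq \hat A^\lambda$, and finish with the uniqueness of the maximal submodule; the paper itself reproduces no proof of Theorem \ref{t4.1}, citing \cite{May3}, where essentially this same Graham--Lehrer/Du--Rui style argument is given. The only blemish is the direction of the final induced map: since $\ker\psi$ is maximal it \emph{equals} $\rad\left(C_R^\lambda\right)$, giving $V \cong D_R^\lambda$ directly, which is cleaner than the surjection you describe.
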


In \cite{May3} it the following result is also obtained:

\begin{corollary} \label{c6.1}  If $\Lambda  = \Lambda _0 $, then $A$ is quasi-hereditary.
\end{corollary}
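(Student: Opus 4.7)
The plan is to exhibit a heredity chain of two-sided ideals of $A$. Choose a linear refinement $\lambda_1,\lambda_2,\dots,\lambda_n$ of the partial order on $\Lambda$ with maximal elements first (so that $\lambda_j\ge\lambda_i$ in $\Lambda$ forces $j\le i$). For $0\le i\le n$, set
\[
J_i := R\text{-span of }\bigl\{\,{}_sC^{\lambda_j}_t : j\le i,\ s\in L(\lambda_j),\ t\in R(\lambda_j)\,\bigr\}.
\]
Under this ordering one has $A^{\lambda_j}=J_j$ and $\hat A^{\lambda_j}=J_{j-1}$ for every $j$, so Lemma \ref{l2.1}(a) immediately gives a filtration $0=J_0\subset J_1\subset\cdots\subset J_n=A$ by two-sided ideals.

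Fix $i$, write $B:=A/J_{i-1}$, and set $\bar J_i:=J_i/J_{i-1}$. The plan is to show $\bar J_i$ is a heredity ideal of $B$. Since $\lambda_i\in\Lambda_0$, pick $s_0\in L(\lambda_i)$ and $t_0\in R(\lambda_i)$ with $r_{s_0t_0}\ne 0$, and let $\bar e\in B$ be the image of $e:=r_{s_0t_0}^{-1}\cdot{}_{s_0}C^{\lambda_i}_{t_0}$. Working modulo $\hat A^{\lambda_i}=J_{i-1}$, Lemma \ref{l2.1}(c) yields at once that $\bar e^{\,2}=\bar e$; applying it twice gives
\[
{}_sC^{\lambda_i}_t \equiv r_{s_0t_0}^{-2}\,{}_sC^{\lambda_i}_{t_0}\cdot{}_{s_0}C^{\lambda_i}_{t_0}\cdot{}_{s_0}C^{\lambda_i}_{t}\pmod{J_{i-1}},
\]
so every basis element of $\bar J_i$ lies in $B\bar e B$, whence $\bar J_i=B\bar e B$. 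A further application of Lemma \ref{l2.1}(c) shows $\bar e\,\bar a\,\bar e\in k\bar e$ for every $\bar a\in B$; thus the corner algebra $\bar e B\bar e=k\bar e$ is one-dimensional and hence semisimple. Finally, $B\bar e$ has the $k$-basis $\{{}_sC^{\lambda_i}_{t_0}+J_{i-1}\}_{s\in L(\lambda_i)}$ and $\bar e B$ the $k$-basis $\{{}_{s_0}C^{\lambda_i}_t+J_{i-1}\}_{t\in R(\lambda_i)}$, and the multiplication map $B\bar e\otimes_{\bar e B\bar e}\bar e B\to \bar J_i$ sends ${}_sC^{\lambda_i}_{t_0}\otimes{}_{s_0}C^{\lambda_i}_t\mapsto r_{s_0t_0}\,{}_sC^{\lambda_i}_t+J_{i-1}$, which is a bijection onto a basis of $\bar J_i$ (up to the invertible scalar $r_{s_0t_0}$). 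By the Dlab--Ringel characterization of heredity ideals, $\bar J_i$ is heredity in $B$.

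Iterating over $i$ yields a heredity chain, so $A$ is quasi-hereditary. The main obstacle is the interplay between the identities of Lemma \ref{l2.1}, which are stated modulo $\hat A^{\lambda_i}$, and the quotient $B=A/J_{i-1}$ in which heredity must be verified; this is made painless by choosing the linear extension so that $\hat A^{\lambda_i}=J_{i-1}$. The hypothesis $\Lambda=\Lambda_0$ is used in exactly one place -- producing a non-zero entry $r_{s_0t_0}$ of the bilinear form which supplies the idempotent generator of $\bar J_i$; without it $\bar J_i$ need not even be an idempotent ideal, let alone a heredity one.
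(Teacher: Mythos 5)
The paper itself does not prove this corollary --- it is quoted from \cite{May3} --- so your argument can only be compared with the standard proof for standardly based/cell algebras, and in substance it coincides with it: a heredity chain built from a linear refinement of $\Lambda$, an idempotent in each layer coming from a nonzero value $r_{s_0t_0}$ of the bilinear form, a one-dimensional (hence semisimple) corner algebra, and the Dlab--Ringel criterion via bijectivity of the multiplication map $B\bar e\otimes_{\bar e B\bar e}\bar e B\to \bar J_i$. All the computations you invoke from Lemma \ref{l2.1}(c) and properties (i),(ii) are correct, and the hypothesis $\Lambda=\Lambda_0$ enters exactly where you say it does.

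One assertion is not literally true and should be repaired, though it does not damage the proof: for a linear refinement of a genuinely partial order one does \emph{not} have $A^{\lambda_j}=J_j$ and $\hat A^{\lambda_j}=J_{j-1}$, since $J_j$ also contains the basis elements indexed by earlier $\lambda_{j'}$ that are incomparable with $\lambda_j$. What is true, and all that your argument uses, are the inclusions $A^{\lambda_j}\subseteq J_j$ and $\hat A^{\lambda_j}\subseteq J_{j-1}$; in particular $J_i=\sum_{j\leqslant i}A^{\lambda_j}$ is a two-sided ideal because it is a sum of the ideals of Lemma \ref{l2.1}(a), and every congruence you quote modulo $\hat A^{\lambda_i}$ remains valid modulo the larger module $J_{i-1}$. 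With "equality" replaced by "inclusion" throughout, your proof is complete.
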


\section {Cell bases for certain monoid algebras}

      In this section we review (again omitting most of the proofs) the cell bases given in \cite{May3} for the monoid algebras $R\left[ {M} \right]$ corresponding to a class of monoids $M$ containing the full transformation semigroups $\mathcal{T} _r $ and the rook monoids $\Re _r $.  Some of the notation and results will be needed in the next section on generalized Schur algebras.
   
     Let $\bar r = \left\{ {1,2, \cdots ,r} \right\}$ and let $\bar \tau _r $ be the monoid of all maps \\ $\alpha :\bar r \cup \left\{ 0 \right\} \to \bar r \cup \left\{ 0 \right\}$ such that $\alpha \left( 0 \right) = 0$.  Note that $\bar \tau _r $ can be identified with the partial transformation semigroup $\mathcal{PT}_r$ of all ``partial maps'' of $\bar r$ to itself.  The full transformation semigroup $\mathcal{T} _r $ of all maps $\bar r \to \bar r$ can be identified with the submonoid of $\bar \tau _r $ consisting of maps with $\alpha ^{ - 1} \left( 0 \right) = 0$.  The rook monoid $\Re _r $ can be identified with the submonoid of $\bar \tau _r $ consisting of maps such that $\alpha ^{ - 1} \left( i \right)$ has at most one element for each $i \in \bar r$.  With these identifications, the symmetric group $\mathfrak{S}_r $ is the intersection $\mathcal{T} _r  \cap \Re _r $.

     Let $M$ be any monoid contained in $\bar \tau _r $ and containing $\mathfrak{S}_r $.  Let $R$ be a commutative domain with unit 1 and let $R\left[ M \right]$ be the monoid algebra over $R$.  We will describe a cell basis for $R\left[ M \right]$.

     For $\alpha  \in M$, the index of $\alpha $ is the number of nonzero elements in the image of $\alpha $, ${\text{index}}\left( \alpha  \right) = \left| {{\text{image}}\left( \alpha  \right) - \left\{ 0 \right\}} \right|$.  Let $I\left( M \right) \subseteq \bar r \cup \left\{ 0 \right\}$ be the set of indices of elements in $M$, that is, 
\[ I\left( M \right) = \left\{ {i:\exists \alpha  \in M{\text{ with index}}\left( \alpha  \right) = i} \right\}. \]
For $i \in \bar r$, let $\Lambda \left( i \right)$ be the set of all (integer) partitions of $i$.  Let $\Lambda \left( 0 \right)$ be a set with one element $\lambda _0 $.  Then define $\Lambda  = \mathop  \cup \nolimits_{i \in I(M)} \Lambda \left( i \right)$.  For $\lambda  \in \Lambda $ define the index $i\left( \lambda  \right)$ to be the integer such that $\lambda  \in \Lambda \left( {i\left( \lambda  \right)} \right)$.  Finally, define a partial order on $\Lambda $ by
\[ \lambda  \geqslant \mu  \Leftrightarrow i\left( \lambda  \right) < i\left( \mu  \right){\text{ or }}i\left( \lambda  \right) = i\left( \mu  \right){\text{ and }}\lambda \underset{\raise0.3em\hbox{$\smash{\scriptscriptstyle-}$}}{ \triangleright } \mu \]
where $\underset{\raise0.3em\hbox{$\smash{\scriptscriptstyle-}$}}{ \triangleright } $ is the usual dominance relation on partitions.

     To define the sets $L\left( \lambda  \right)$ and $R\left( \lambda  \right)$ we need some preliminaries.  First, for $i \in \bar r \cup \left\{ 0 \right\}$ let $C\left( {i,r} \right)$ be the collection of all $i$-sets of elements in $\bar r$, that is, $C(i,r) = \left\{ {C = \left\{ {c_1 ,c_2 , \ldots ,c_i } \right\}:1 \leqslant c_1  < c_2  <  \cdots  < c_i  \leqslant r} \right\}$.  ($C(0,r)$ contains one element, the empty set.)  For any $C \in C(i,r)$, define a map $\phi _C :\bar i \cup \left\{ 0 \right\} \to \bar r \cup \left\{ 0 \right\}$ by $\phi _C (j) = c_j {\text{ for }}j \in \bar i$, $\phi _C (0) = 0$.

     Next, choose any ordering of the $2^r $ subsets of $\bar r$ and label these subsets $d_j $ so that $d_1  < d_2  <  \cdots  < d_{2^r } $.  Let $D(i,r)$ be the collection of sets of $i$ nonempty, pairwise disjoint subsets of $\bar r$, that is, 
     \[D\left( {i,r} \right) = \left\{  \left\{ d_{a_1 } ,d_{a_2 } , \cdots ,d_{a_i }  \right\}:d_{a_j }  \ne \emptyset ,d_{a_j }  \cap d_{a_k }  = \emptyset \text{ and }d_{a_j }  < d_{a_k } \text{ for } j < k \right\} .\]  ($D(0,r)$ also contains one element, the empty set.)  For any $D \in D\left( {i,r} \right)$ define a map $\psi _D :\bar r \cup \left\{ 0 \right\} \to \bar i \cup \left\{ 0 \right\}$ by $\psi _D (x) = j{\text{ for }}x \in d_{a_j } $,  $\psi _D (x) = 0{\text{ when }}x \notin d_{a_j } {\text{ for any }}j$.

     Regard an element $\sigma $ in the symmetric group $\mathfrak{S}_i $ as a mapping $\sigma :\bar i \cup \{ 0\}  \to \bar i \cup \left\{ 0 \right\}$ such that $\sigma \left( 0 \right) = 0$. Then for any $\sigma  \in \mathfrak{S}_i \,,\,C \in C(i,r)\,,\,D \in D(i,r)$, define an element $\alpha  = \alpha \left( {\sigma ,C,D} \right) \in \bar \tau _r $ by $\alpha  = \phi _C  \circ \sigma  \circ \psi _D $.  Then $\alpha \left( {\sigma ,C,D} \right)$ has index $i$.  Note that $\alpha \left( x \right) = \left\{ {\begin{array}{*{20}c}
        {c_{\sigma (j)} {\text{ if }}x \in D_{\alpha _j } }  \\
        {0{\text{ if }}x \notin D_{\alpha _j } {\text{ for any }}j}  \\
     
      \end{array} } \right.$  .

\begin{lemma} \label{ll2.1}
   For any $\alpha  \in \bar \tau _r $ of index $i$, there exist unique $\sigma _\alpha   \in \mathfrak{S}_i \,,\,C_\alpha   \in C(i,r)\,,\,D_\alpha   \in D(i,r)$ such that $\alpha  = \phi _{C_\alpha  }  \circ \sigma _\alpha   \circ \psi _{D_\alpha  } $.
\end{lemma}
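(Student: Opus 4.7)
The plan is to read off $C_\alpha$, $D_\alpha$, and $\sigma_\alpha$ directly from the partial map $\alpha$, and then argue that any decomposition of $\alpha$ of the required form must agree with this reading. For existence, since $\alpha$ has index $i$, its nonzero image is an $i$-element subset of $\bar r$, which I take as $C_\alpha = \{c_1 < c_2 < \cdots < c_i\} \in C(i,r)$. For each $k \in \bar i$ the fiber $F_k = \alpha^{-1}(c_k)$ is a nonempty subset of $\bar r$, and the $F_k$ are pairwise disjoint, so $D_\alpha = \{F_1, \ldots, F_i\}$ is an element of $D(i,r)$. List its members in the prescribed subset-ordering as $d_{a_1} < \cdots < d_{a_i}$, and define $\sigma_\alpha \in \mathfrak{S}_i$ by the rule $\sigma_\alpha(j) = k$ iff $d_{a_j} = F_k$. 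Since both labelings run over the same $i$-element collection of subsets, $\sigma_\alpha$ is a bijection of $\bar i$.

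Next I verify $\alpha = \phi_{C_\alpha} \circ \sigma_\alpha \circ \psi_{D_\alpha}$ pointwise. For $x \in d_{a_j} = F_{\sigma_\alpha(j)}$, we have $\psi_{D_\alpha}(x) = j$, so the right side returns $\phi_{C_\alpha}(\sigma_\alpha(j)) = c_{\sigma_\alpha(j)} = \alpha(x)$. For $x \in \bar r \cup \{0\}$ lying in no $d_{a_j}$, $\psi_{D_\alpha}(x) = 0$ and the composite returns $0$; but such $x$ belongs to no nonzero fiber of $\alpha$, hence $\alpha(x) = 0$ as well. For uniqueness, suppose $\alpha = \phi_C \circ \sigma \circ \psi_D$ for some triple $(\sigma, C, D)$. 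Then the nonzero image of the composite equals $\phi_C(\bar i) = C$, so $C = C_\alpha$. For each $k \in \bar i$, the fiber $\alpha^{-1}(c_k) = \psi_D^{-1}(\sigma^{-1}(k))$ is the block of $D$ labeled by $\sigma^{-1}(k)$; hence the blocks of $D$ are exactly the nonzero fibers of $\alpha$, giving $D = D_\alpha$. Comparing to the defining rule for $\sigma_\alpha$ then forces $\sigma = \sigma_\alpha$.

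I expect the only real obstacle to be bookkeeping with the two distinct orderings in play: the natural order on $C_\alpha \subseteq \bar r$ and the fixed subset order on the blocks of $D_\alpha$. The permutation $\sigma_\alpha$ is precisely the discrepancy between these two orderings induced by $\alpha$, and as long as this is kept straight the argument is a direct unwinding of the definitions of $\phi_C$ and $\psi_D$.
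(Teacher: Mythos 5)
Your proof is correct: taking $C_\alpha$ to be the nonzero image of $\alpha$, $D_\alpha$ the set of nonzero fibers $\alpha^{-1}(c_k)$, and $\sigma_\alpha$ the permutation recording the mismatch between the increasing order on $C_\alpha$ and the fixed subset-order on the blocks of $D_\alpha$, with uniqueness forced by reading the image and fibers off any decomposition $\phi_C \circ \sigma \circ \psi_D$. The paper itself states this lemma without proof (it is quoted from the earlier cell algebras paper), and your direct unwinding of the definitions is exactly the argument that is intended there, so there is nothing to add.
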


     Notice that $\mathfrak{S}_r $ acts on the left on $C\left( {i,r} \right)$:  for $C = \left\{ {c_1 ,c_2 , \cdots ,c_i } \right\} \in C\left( {i,r} \right)$ and $\sigma  \in \mathfrak{S}_r $, let $\sigma C = \left\{ {\sigma c_1 ,\sigma c_2 , \cdots ,\sigma c_i } \right\}$.

\begin{lemma} \label{ll2.2}
  Given $C,C' \in C\left( {i,r} \right){\text{ and }}\pi  \in \mathfrak{S}_i $, there exists a $\sigma  \in \mathfrak{S}_r $ such that $C' = \sigma C$ and $\sigma  \circ \phi _C  = \phi _{C'}  \circ \pi $.
\end{lemma}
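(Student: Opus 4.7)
The plan is to define $\sigma$ explicitly on $C$ so that condition (2) is forced, and then extend it to a permutation of $\bar r$ in any convenient way so that condition (1) follows automatically.

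First, I would write $C = \{c_1 < c_2 < \cdots < c_i\}$ and $C' = \{c'_1 < c'_2 < \cdots < c'_i\}$, so that by definition $\phi_C(j) = c_j$ and $\phi_{C'}(j) = c'_j$ for $j \in \bar i$. The required identity $\sigma \circ \phi_C = \phi_{C'} \circ \pi$ evaluated at $j \in \bar i$ reads $\sigma(c_j) = c'_{\pi(j)}$, which \emph{defines} $\sigma$ on the subset $C \subseteq \bar r$. Because $\pi \in \mathfrak{S}_i$ is a bijection and the $c'_k$ are distinct, this specification gives a bijection $C \to C'$.

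Next I would extend $\sigma$ to all of $\bar r$. Since $|\bar r \setminus C| = r - i = |\bar r \setminus C'|$, I may pick any bijection $\bar r \setminus C \to \bar r \setminus C'$ and let $\sigma$ agree with it off of $C$. This produces an element $\sigma \in \mathfrak{S}_r$ (regarded as fixing $0$) which manifestly sends $C$ to $C'$ setwise, giving condition (1). Finally, to verify condition (2) I just check it pointwise on $\bar i \cup \{0\}$: for $j \in \bar i$ it holds by construction, and at $0$ both sides are $0$ since $\phi_C(0) = 0 = \phi_{C'}(0)$ and $\sigma$ fixes $0$.

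There is essentially no obstacle here: the lemma is really a counting/extension statement, and the only subtle point worth flagging is that the freedom in extending $\sigma$ off of $C$ shows that $\sigma$ is not unique, so one must be content with producing \emph{some} such $\sigma$ rather than a canonical one. This non-uniqueness is harmless for the role the lemma plays in building the cell basis.
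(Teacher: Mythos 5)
Your proof is correct: defining $\sigma$ on $C$ by $\sigma(c_j) = c'_{\pi(j)}$ (which is forced by the identity $\sigma \circ \phi_C = \phi_{C'} \circ \pi$ on $\bar i$) and extending by any bijection $\bar r \setminus C \to \bar r \setminus C'$ gives an element of $\mathfrak{S}_r$ with both required properties, and your check at $0$ is the right finishing touch. The paper itself omits the proof of this lemma (deferring to \cite{May3}), and your construction is exactly the standard counting-and-extension argument one would expect there, including the correct observation that $\sigma$ is not unique but need not be.
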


     Given $C \in C\left( {i,r} \right)\,,\,D \in D\left( {i,r} \right)$, let $A\left( {C,D} \right)$ be the free $R$-submodule of $R[\bar \tau _r ]$ with basis $\left\{ {\alpha  \in \bar \tau _r :C_\alpha   = C,D_\alpha   = D} \right\}$.  Note that if $i = 0$, then $C\left( {0,r} \right) = D(0,r) = \left\{ \emptyset  \right\}$, a set with one element. $A\left( {\emptyset ,\emptyset } \right)$ is then one dimensional with basis $z$, where $z$ is the zero map such that $z\left( j \right) = 0$ for all $j \in \bar r \cup \left\{ 0 \right\}$. Evidently, as an $R$-module 
\[ R\left[ {\bar \tau _r } \right] = \mathop  \oplus \limits_{i \in \bar r \cup \left\{ 0 \right\}} \left( {\mathop  \oplus \limits_{C \in C\left( {i,r} \right),D \in D\left( {i,r} \right)} A\left( {C,D} \right)} \right) . \]

\begin{lemma} \label{ll2.3}
  Suppose that for $D \in D\left( {i,r} \right)$ there exists an $\alpha  \in M$ with $D_\alpha   = D$.  Then $A\left( {C,D} \right) \cap R\left[ M \right] = A\left( {C,D} \right)$ for every $C \in C\left( {i,r} \right)$.
\end{lemma}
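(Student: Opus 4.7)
The containment $A(C,D) \cap R[M] \subseteq A(C,D)$ is automatic, so the entire task is to prove $A(C,D) \subseteq R[M]$. Since $A(C,D)$ is the free $R$-module on $\{\beta \in \bar\tau_r : C_\beta = C, D_\beta = D\}$, it is enough to show that every such basis element $\beta$ belongs to $M$. By Lemma \ref{ll2.1} each such $\beta$ has a unique factorization $\beta = \phi_C \circ \sigma \circ \psi_D$ for some $\sigma \in \mathfrak{S}_i$, so my goal is to produce, for each choice of $C \in C(i,r)$ and $\sigma \in \mathfrak{S}_i$, the element $\phi_C \circ \sigma \circ \psi_D$ inside $M$.

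The plan is to start from the hypothesized $\alpha \in M$ with $D_\alpha = D$, write $\alpha = \phi_{C_\alpha} \circ \sigma_\alpha \circ \psi_D$ via Lemma \ref{ll2.1}, and then left-multiply $\alpha$ by a suitable element of $\mathfrak{S}_r \subseteq M$ to convert it into the desired $\beta$. Concretely, given the target $\beta = \phi_C \circ \sigma \circ \psi_D$, set $\pi = \sigma \sigma_\alpha^{-1} \in \mathfrak{S}_i$ and apply Lemma \ref{ll2.2} to $C_\alpha, C$ with this $\pi$. This yields $\tau \in \mathfrak{S}_r$ such that $\tau C_\alpha = C$ and $\tau \circ \phi_{C_\alpha} = \phi_C \circ \pi$. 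A direct computation then gives
\[ \tau \circ \alpha = (\tau \circ \phi_{C_\alpha}) \circ \sigma_\alpha \circ \psi_D = \phi_C \circ \pi \circ \sigma_\alpha \circ \psi_D = \phi_C \circ \sigma \circ \psi_D = \beta. \]
Since $\mathfrak{S}_r \subseteq M$, we have $\tau \in M$; because $M$ is a monoid it is closed under composition, so $\beta = \tau \circ \alpha \in M$, completing the proof.

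I do not expect a real obstacle here: the whole point is that the left action of $\mathfrak{S}_r$ on $\bar\tau_r$ can be used to simultaneously change the $C$-part and the $\sigma$-part of an element while leaving the $D$-part untouched, and this is exactly what Lemma \ref{ll2.2} packages. The only point requiring care is the symmetric-group bookkeeping: the factor $\pi$ is forced by the requirement $\pi \sigma_\alpha = \sigma$, which is why $\pi = \sigma \sigma_\alpha^{-1}$ is the right choice. Running the argument over all pairs $(C,\sigma)$ shows that every basis element of $A(C,D)$ lies in $M$, and hence $A(C,D) \subseteq R[M]$ as needed.
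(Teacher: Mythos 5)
Your proof is correct, and it follows exactly the route the paper sets up: the paper omits this proof (deferring to its reference for cell algebras), but Lemmas \ref{ll2.1} and \ref{ll2.2} are stated immediately beforehand precisely as the tools you use — unique factorization $\beta = \phi_C \circ \sigma \circ \psi_D$ plus left multiplication by a suitable $\tau \in \mathfrak{S}_r \subseteq M$ to adjust the $C$- and $\sigma$-parts while fixing $D$. Your choice $\pi = \sigma\sigma_\alpha^{-1}$ and the verification $\tau \circ \alpha = \beta$ are exactly right.
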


     Define $D\left( {M,i,r} \right) = \left\{ {D \in D\left( {i,r} \right):\exists \alpha  \in M{\text{ with }}D_\alpha   = D} \right\}$.  Then as an $R$-module, $R\left[ M \right] = \mathop  \oplus \limits_{i \in \bar r \cup \left\{ 0 \right\}} \left( {\mathop  \oplus \limits_{C \in C\left( {i,r} \right),D \in D\left( {M,i,r} \right)} A\left( {C,D} \right)} \right)$  by lemma~\ref{ll2.3}.  So choosing a basis for each free $R$-module $A\left( {C,D} \right)\,,\,C \in C\left( {i,r} \right)\,,\,D \in D\left( {M,i,r} \right)$ will give a basis for $R\left[ M \right]$.

\begin{definition} \label{dd2.2}
  For $C \in C\left( {i,r} \right)\,,\,D \in D\left( {i,r} \right)$, $i > 0$, define a map of $R$-modules $H_{C,D} :R\left[ {\mathfrak{S}_i } \right] \to A\left( {C,D} \right)$ by $H_{C,D} \left( \sigma  \right) = \phi _C  \circ \sigma  \circ \psi _D $.
\end{definition}

By lemma \ref{ll2.1}, $H_{C,D} $ is well-defined and is a bijection between free $R$-modules.  So any basis for $R\left[ {\mathfrak{S}_i } \right]$ transfers to a basis for $A\left( {C,D} \right)$.  Let $B_i  = \left\{ {_s C_t ^\lambda  :\lambda  \in \Lambda (i)\,,\,s,t{\text{ standard }}\lambda {\text{ tableaux}}} \right\}$ be the standard Murphy cellular basis for the cellular algebra $R\left[ {\mathfrak{S}_i } \right]$ (See e.g. \cite{GL} or \cite{Mathas} ).  Then $\left\{ {H_{C,D} \left( {\,_s C_t ^\lambda  } \right):\,_s C_t ^\lambda   \in B_i } \right\}$ is a basis for $A\left( {C,D} \right)$.  

      We can now finally define our index sets $L\left( \lambda  \right)$ and $R\left( \lambda  \right)$.   Given $\lambda  \in \Lambda \left( i \right)\,,\,i \in I\left( M \right)$, $i > 0$, define 
\[L\left( \lambda  \right) = \left\{ {\left( {C,s} \right):C \in C\left( {i,r} \right),\,\,s{\text{ a standard }}\lambda {\text{ tableau}}} \right\} \]
 and 
\[ R\left( \lambda  \right) = \left\{ {\left( {D,t} \right):D \in D\left( {M,i,r} \right)\,,\,t{\text{ a standard }}\lambda {\text{ tableau}}} \right\}. \]
Then for any $\lambda  \in \Lambda \,,\,\left( {C,s} \right) \in L\left( \lambda  \right)\,,\,\left( {D,t} \right) \in R\left( \lambda  \right)$ define 
\[ {}_{\left( {C,s} \right)}C_{\left( {D,t} \right)}^\lambda   = H_{C,D} \left( {_s C_t ^\lambda  } \right) \in A\left( {C,D} \right) \subseteq R\left[ M \right]. \]
If $0 \in I\left( M \right)$, that is, if the zero map $z$ such that $z\left( j \right) = 0$ for all $j \in \bar r \cup \left\{ 0 \right\}$ is in $M$, we define $\Lambda \left( 0 \right)$ to have a single element $\lambda _0 $ and define $L\left( {\lambda _0 } \right) = R\left( {\lambda _0 } \right) = \left\{ \emptyset  \right\}$  each to be sets containing one element, $\emptyset $.  Then define ${}_\emptyset C_\emptyset ^{\lambda _0 }  = z$.   The set
\[ \left\{ {{}_{\left( {C,s} \right)}C_{\left( {D,t} \right)}^\lambda  :\lambda  \in \Lambda \,,\,\left( {C,s} \right) \in L\left( \lambda  \right)\,,\,\left( {D,t} \right) \in R\left( \lambda  \right)} \right\} \]
is a union of the bases for the various direct summands $A\left( {C,D} \right)$ and is therefore a basis for the free $R$-module $R\left[ M \right]$.   We will show that it is a cell-basis for $R\left[ M \right]$.
   
     Write $A_i $ for the cellular algebra $R\left[ {\mathfrak{S}_i } \right]$ and $\hat A_i ^\lambda  $ for the two sided ideal in $R\left[ {\mathfrak{S}_i } \right]$ spanned by $\left\{ {\,_s C_t ^\mu  :\mu  > \lambda } \right\}$.  The following observation will be useful.  Recall that $\hat A^\lambda  $ is the $R$-submodule of $R[M]$ spanned by $\left\{ {_{(C,s)} C^\mu  _{(D,t)} :\mu  > \lambda } \right\}$.

\begin{lemma} \label{ll2.4}
  For any $C \in C\left( {i,r} \right)\,,\,D \in D\left( {i,r} \right)$ and $\lambda  \in \Lambda \left( i \right)$, $H_{C,D} \left( {\hat A_i ^\lambda  } \right) \subseteq \hat A^\lambda  $.
\end{lemma}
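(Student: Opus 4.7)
The plan is to proceed by linearity and reduce to checking a single basis element. Since $H_{C,D}$ is $R$-linear and $\hat{A}_i^\lambda$ is, by the Murphy cellular structure on $R[\mathfrak{S}_i]$, the $R$-span of the elements ${}_s C_t^\mu$ with $\mu\in\Lambda(i)$ strictly dominating $\lambda$ (and $s,t$ standard $\mu$-tableaux), it suffices to show that $H_{C,D}({}_s C_t^\mu) \in \hat{A}^\lambda$ for each such triple $(\mu,s,t)$.

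The next step is to invoke Definition~\ref{dd2.2} together with the naming convention introduced when defining the cell basis of $R[M]$: by that convention, $H_{C,D}({}_s C_t^\mu) = \phi_C \circ {}_s C_t^\mu \circ \psi_D$ is literally the element ${}_{(C,s)} C_{(D,t)}^\mu$. So the claim reduces to showing that ${}_{(C,s)} C_{(D,t)}^\mu$ is one of the basis vectors spanning $\hat{A}^\lambda$.

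Finally, I would verify the indexing conditions. Since both $\mu$ and $\lambda$ lie in $\Lambda(i)$, the partial order on $\Lambda$ defined in the paper reduces to dominance on partitions of $i$, which is exactly the order used for the Murphy basis of $R[\mathfrak{S}_i]$; hence $\mu \triangleright \lambda$ translates to $\mu > \lambda$ in $\Lambda$. Moreover $(C,s) \in L(\mu)$ because $C \in C(i,r)$ and $s$ is a standard $\mu$-tableau, and $(D,t) \in R(\mu)$ provided $D \in D(M,i,r)$; in that case ${}_{(C,s)} C_{(D,t)}^\mu$ is by definition a basis element of $\hat{A}^\lambda$, so it lies there.

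The main (and essentially only) obstacle is purely bookkeeping: confirming that the dominance order used implicitly for the Murphy cellular structure on $R[\mathfrak{S}_i]$ agrees with the restriction to $\Lambda(i)$ of the partial order introduced on $\Lambda$. Since both are literally dominance on partitions of $i$, this is immediate. A secondary point is the case $D \notin D(M,i,r)$: there the image $H_{C,D}({}_s C_t^\mu)$ may fall outside $R[M]$, and the statement is most naturally read in the ambient algebra $R[\bar{\tau}_r]$ (the case $M = \bar{\tau}_r$), where the same argument goes through verbatim and yields the inclusion into the larger $\hat{A}^\lambda$; restricting back to $R[M]$ when $D \in D(M,i,r)$ gives the version needed for later applications.
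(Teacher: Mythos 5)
The paper states Lemma~\ref{ll2.4} without proof (section 3 explicitly omits the proofs, deferring to \cite{May3}), so there is nothing to compare against; judged on its own, your argument is correct and is the expected one. The key identification $H_{C,D}\left( {_s C_t ^\mu } \right) = {}_{\left( {C,s} \right)}C_{\left( {D,t} \right)}^\mu$ is literally the definition of the basis elements, and your check that the restriction of the partial order on $\Lambda$ to $\Lambda\left( i \right)$ is dominance, hence matches the order underlying the Murphy basis of $R\left[ {\mathfrak{S}_i } \right]$, is exactly the point that makes the inclusion immediate. Your side remark about $D \notin D\left( {M,i,r} \right)$ is also well taken: as stated the lemma quantifies over all of $D\left( {i,r} \right)$, and the inclusion into $\hat A^\lambda \subseteq R\left[ M \right]$ only makes literal sense when $D \in D\left( {M,i,r} \right)$ (or when one works in $R\left[ {\bar \tau _r } \right]$); this is how the lemma is in fact used later.
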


\begin{lemma} \label{ll2.5}
  For $\alpha  \in M\,,\,C \in C\left( {i,r} \right)$, suppose $C' = \alpha \left( C \right) \in C\left( {i,r} \right)$.  Then there exists $\rho  \in \mathfrak{S}_i $ such that $\alpha  \circ \phi _C  = \phi _{C'}  \circ \rho $.
\end{lemma}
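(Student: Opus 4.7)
The plan is to construct $\rho$ explicitly from the data of $\alpha$ and $C$, then verify the identity on the finite set $\bar{i}\cup\{0\}$.

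Write $C=\{c_1,\dots,c_i\}$ with $c_1<\dots<c_i$ and $C'=\{c'_1,\dots,c'_i\}$ with $c'_1<\dots<c'_i$. The hypothesis $C'=\alpha(C)\in C(i,r)$ says that $\alpha(C)$ has cardinality exactly $i$ (in particular none of the $\alpha(c_j)$ equals $0$), which forces $\alpha$ to be injective when restricted to $C$. Consequently, for each $j\in\bar{i}$ there is a unique index $\rho(j)\in\bar{i}$ with $\alpha(c_j)=c'_{\rho(j)}$. Define $\rho$ by this rule on $\bar{i}$, extended by $\rho(0)=0$. Injectivity of $\alpha$ on $C$ implies $\rho$ is an injection of the finite set $\bar{i}$ to itself, hence a bijection, so $\rho\in\mathfrak{S}_i$.

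It then remains to check $\alpha\circ\phi_C=\phi_{C'}\circ\rho$ on $\bar{i}\cup\{0\}$. At $0$, both sides give $0$ since $\phi_C(0)=\phi_{C'}(0)=0$ and every element of $M\subseteq\bar\tau_r$ fixes $0$. For $j\in\bar{i}$, by definition $(\alpha\circ\phi_C)(j)=\alpha(c_j)=c'_{\rho(j)}$, while $(\phi_{C'}\circ\rho)(j)=\phi_{C'}(\rho(j))=c'_{\rho(j)}$, and the two sides agree.

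There is really no substantive obstacle here: the lemma is a direct unpacking of the definitions of $\phi_C$ and $\phi_{C'}$ combined with the observation that $|\alpha(C)|=i$ forces $\alpha|_C$ to be a bijection onto $C'$. The only point worth flagging is that one must use $C'\in C(i,r)$ (not just $\alpha(C)\subseteq\bar r$) to rule out the degenerate case where some $\alpha(c_j)=0$ or two of the $\alpha(c_j)$ coincide; without that hypothesis the permutation $\rho$ would not exist.
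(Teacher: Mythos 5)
Your proof is correct: the paper omits the proof of this lemma (deferring to the cited reference), but your construction of $\rho$ as the unique index map with $\alpha(c_j)=c'_{\rho(j)}$, justified by the observation that $\alpha(C)\in C(i,r)$ forces $\alpha|_C$ to be a bijection onto $C'$, is exactly the standard argument, and it matches the construction the paper does spell out for the analogous Lemma~\ref{l10.1}. Nothing is missing.
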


\begin{lemma} \label{ll2.6}
  For $\alpha  \in M\,,\,D = \left\{ {d_{a_j } :j \in \bar i} \right\} \in D\left( {i,r} \right)$, suppose that $\alpha ^{ - 1} \left( {d_{a_j } } \right) \ne \emptyset $  for all $j$, so that $D' = \left\{ {\alpha ^{ - 1} \left( {d_{a_j } } \right):j \in \bar i} \right\} \in D\left( {i,r} \right)$.   Then there exists $\rho  \in \mathfrak{S}_i $ such that $\psi _D  \circ \alpha  = \rho  \circ \psi _{D'} $.  Furthermore, if $D \in D\left( {M,i,r} \right)$, then $D' \in D\left( {M,i,r} \right)$.
\end{lemma}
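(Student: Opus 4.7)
\textbf{Proof proposal for Lemma~\ref{ll2.6}.} The plan is to construct $\rho$ as the reordering permutation that takes the ``natural'' enumeration $e_j := \alpha^{-1}(d_{a_j})$ of $D'$ into the prescribed ordering of subsets, and to prove the second assertion by exhibiting the witness $\gamma = \beta \circ \alpha \in M$.

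Set $e_j = \alpha^{-1}(d_{a_j})$ for $j \in \bar i$. By hypothesis each $e_j$ is nonempty, and since the $d_{a_j}$ are pairwise disjoint so are the $e_j$. Thus $\{e_1,\ldots,e_i\}$ is a collection of $i$ distinct nonempty pairwise disjoint subsets of $\bar r$, and listing it in the fixed ordering of subsets yields $D' = \{d'_{b_1} < d'_{b_2} < \cdots < d'_{b_i}\} \in D(i,r)$. Let $\rho \in \mathfrak{S}_i$ be the unique permutation with $d'_{b_k} = e_{\rho(k)}$ for all $k$, extended to $\bar i \cup \{0\}$ by $\rho(0) = 0$. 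To check the equation I would evaluate both sides on an arbitrary $x \in \bar r \cup \{0\}$: if $x \in d'_{b_k}$ then $x \in e_{\rho(k)}$, so $\alpha(x) \in d_{a_{\rho(k)}}$, giving $\psi_D(\alpha(x)) = \rho(k) = \rho(\psi_{D'}(x))$; otherwise $\alpha(x)$ lies in no $d_{a_j}$ and both sides equal $0$ (covering $x = 0$ as well, since $\alpha(0) = 0$).

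For the closure assertion, pick $\beta \in M$ with $D_\beta = D$; then $\gamma := \beta \circ \alpha$ lies in $M$. Using the factorization $\beta = \phi_{C_\beta} \circ \sigma_\beta \circ \psi_D$ supplied by Lemma~\ref{ll2.1}, $\beta$ sends each $d_{a_j}$ to the single element $c_{\sigma_\beta(j)}$ of $C_\beta$ and sends the complement of $\bigcup_j d_{a_j}$ to $0$, so the nonempty fibers of $\gamma$ over its nonzero image are precisely $\gamma^{-1}(c_{\sigma_\beta(j)}) = \alpha^{-1}(d_{a_j}) = e_j$, each nonempty by hypothesis. Hence $\gamma$ has index $i$ with $D_\gamma = \{e_1,\ldots,e_i\} = D'$, showing $D' \in D(M,i,r)$. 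The only mild hazard in writing this up cleanly is keeping a single convention for $\rho$ throughout; beyond that, the argument is a direct unwinding of the definitions of $\psi_D$, $D_\alpha$, and $D(M,i,r)$.
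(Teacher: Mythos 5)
Your proposal is correct: the pointwise verification of $\psi_D \circ \alpha = \rho \circ \psi_{D'}$ with $\rho$ the reordering permutation (extended by $\rho(0)=0$), and the closure argument via $\gamma = \beta\circ\alpha \in M$ with $D_\beta = D$, showing $D_\gamma = D'$ from the unique factorization of Lemma~\ref{ll2.1}, are exactly the direct definitional arguments this lemma calls for. The paper itself omits the proof (it is quoted from \cite{May3}), but your route is the natural one and has no gaps.
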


\begin{proposition} \label{pp2.1}
  $C = \left\{ {{}_{\left( {C,s} \right)}C_{\left( {D,t} \right)}^\lambda  :\lambda  \in \Lambda \,,\,\left( {C,s} \right) \in L\left( \lambda  \right)\,,\,\left( {D,t} \right) \in R\left( \lambda  \right)} \right\}$ is a cell basis for $A = R\left[ M \right]$. \end{proposition}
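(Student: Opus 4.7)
The plan is to verify axioms (i) and (ii) of Definition \ref{d2.1} for the basis $C$. By $R$-linearity it suffices to take $a = \alpha \in M$; I will focus on (i), since (ii) is proved in mirror fashion using Lemma \ref{ll2.6} in place of Lemma \ref{ll2.5}. The case $\lambda = \lambda_0$ is trivial (the zero map $z$ satisfies $\alpha z = z \alpha = z$ for any $\alpha \in M$), so fix $\lambda \in \Lambda(i)$ with $i > 0$ and consider a basis element ${}_{(C,s)}C^\lambda_{(D,t)} = H_{C,D}({}_s C^\lambda_t)$.

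Split into two cases according to whether $\alpha$ is injective on $C$. If $|\alpha(C) - \{0\}| = i$, set $C' = \alpha(C) \in C(i,r)$; Lemma \ref{ll2.5} provides $\rho \in \mathfrak{S}_i$ with $\alpha \circ \phi_C = \phi_{C'} \circ \rho$, so
\[
\alpha \cdot H_{C,D}({}_s C^\lambda_t) \;=\; H_{C',D}(\rho \cdot {}_s C^\lambda_t).
\]
Cellularity of the Murphy basis in $R[\mathfrak{S}_i]$ gives $\rho \cdot {}_s C^\lambda_t \equiv \sum_{s'} r_L^{\mathfrak{S}_i}(\rho, \lambda, s, s') \cdot {}_{s'} C^\lambda_t \pmod{\hat A_i^\lambda}$, with coefficients independent of $t$. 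Applying $H_{C',D}$ and using Lemma \ref{ll2.4} to transfer $\hat A_i^\lambda$ into $\hat A^\lambda$ yields
\[
\alpha \cdot {}_{(C,s)}C^\lambda_{(D,t)} \;\equiv\; \sum_{s'} r_L^{\mathfrak{S}_i}(\rho, \lambda, s, s') \cdot {}_{(C',s')}C^\lambda_{(D,t)} \pmod{\hat A^\lambda},
\]
which is the required form, with the declared $r_L$ vanishing for targets whose first coordinate differs from $C'$. If instead $|\alpha(C) - \{0\}| < i$, then each map $\alpha \circ \phi_C \circ \sigma \circ \psi_D$ (for $\sigma$ in the support of ${}_s C^\lambda_t$) has image of size strictly less than $i$, so by Lemma \ref{ll2.1} its canonical decomposition factors through $\mathfrak{S}_{i_1}$ for some $i_1 < i$. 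Its $C$-expansion therefore uses only basis elements of index $i_1$, and since $i(\mu) < i(\lambda)$ forces $\mu > \lambda$ in the partial order on $\Lambda$, the whole product lies in $\hat A^\lambda$ and all $r_L$ may be taken to be $0$.

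The crux — the only place something could fail — is the independence of $r_L$ from the opposite index $(D,t)$ in the first case. This is automatic because the permutation $\rho$ supplied by Lemma \ref{ll2.5} depends only on $\alpha, C, C'$, and the subsequent cellular reduction happens entirely inside $R[\mathfrak{S}_i]$, where independence from $t$ is built into the Murphy basis. The symmetric argument for axiom (ii) splits $\alpha$ according to whether every $d_{a_j} \subseteq D$ has nonempty $\alpha$-preimage: if not, an analogous index drop forces the product into $\hat A^\lambda$; if so, Lemma \ref{ll2.6} produces $\rho' \in \mathfrak{S}_i$ and $D' \in D(M,i,r)$ with $\psi_D \circ \alpha = \rho' \circ \psi_{D'}$, converting the right product into $H_{C,D'}({}_s C^\lambda_t \cdot \rho')$, and the right-hand cellularity of the Murphy basis then produces $r_R$-coefficients depending only on $\alpha, \lambda, (D,t), (D',t')$.
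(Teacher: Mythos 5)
Your proposal is correct and takes exactly the route the paper intends: the paper omits the proof of Proposition \ref{pp2.1} (deferring to \cite{May3}), but the lemmas it states immediately beforehand --- \ref{ll2.4} (transfer of $\hat A_i^\lambda$ into $\hat A^\lambda$), \ref{ll2.5} and \ref{ll2.6} (the permutations $\rho$ with $\alpha\circ\phi_C=\phi_{C'}\circ\rho$ and $\psi_D\circ\alpha=\rho\circ\psi_{D'}$, with $D'\in D(M,i,r)$) --- are precisely the ingredients you combine with the Murphy basis cellularity and the index-drop observation that elements of index $<i$ lie in $\hat A^\lambda$. The case analysis, the independence of the coefficients from the opposite index, and the treatment of $\lambda_0$ are all handled correctly, so nothing further is needed.
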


      For $\lambda  \in \Lambda $, the right cell module  $C_R ^\lambda  $ is a right $A$-module and a free $R$-module with basis $\left\{ {C_{(D,t)} ^\lambda  :(D,t) \in R\left( \lambda  \right)} \right\}$, while the left cell module $\,_L C^\lambda  $ is a left $A$-module and a free $R$-module with basis $\left\{ {\,_{(C,s)} C^\lambda  :(C,s) \in L\left( \lambda  \right)} \right\}$.  The bracket for $A$ is an $R$-bilinear map $\left\langle { - , - } \right\rangle :C_R ^\lambda   \times \,_L C^\lambda   \to R$ defined on basis elements by $\left\langle {C_{(D,t)} ^\lambda  ,\,_{(C,s)} C^\lambda  } \right\rangle  = r_{(C,s),(D,t)}  \in R$  where ${}_{\left( {C',s'} \right)}C_{\left( {D,t} \right)}^\lambda   \cdot {}_{\left( {C,s} \right)}C_{\left( {D',t'} \right)}^\lambda   = r_{(C,s),(D,t)}  \cdot {}_{\left( {C',s'} \right)}C_{\left( {D',t'} \right)}^\lambda  \,\bmod \hat A^\lambda  $.

\begin{lemma} \label{ll3.1}
  Assume $C,C' \in C\left( {i,r} \right)\,,\,D,D' \in D\left( {i,r} \right)$ and $x,y \in R\left( {\mathfrak{S}_i } \right)$.  Then
   \begin{enumerate} [\upshape(a)]
      \item If $\rho  = \psi _D  \circ \phi _C :\bar i \to \bar i$ is not bijective, then $H_{C',D} \left( x \right) \cdot H_{C,D'} \left( y \right) \in J_{i - 1} $.
      \item If $\rho  = \psi _D  \circ \phi _C :\bar i \to \bar i$ is bijective, then for any $\pi  \in \mathfrak{S}_i $ ,	    			$H_{C',D} \left( x \right) \cdot H_{C,D'} \left( {\pi y} \right) = H_{C',D'} \left( {x\rho \pi y} \right)$.
   \end{enumerate}
\end{lemma}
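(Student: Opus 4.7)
The plan is to handle both parts by a direct composition computation in $\bar\tau_r$, reducing to individual permutations by $R$-bilinearity. Since $H_{C,D}$ is $R$-linear by Definition \ref{dd2.2} and multiplication in $R[\bar\tau_r]$ is $R$-bilinear, it suffices to treat the case $x = \sigma$, $y = \tau$ with $\sigma,\tau \in \mathfrak{S}_i$; the general statements then follow by linear extension.

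For part (b), associativity of composition together with the definition $\rho = \psi_D \circ \phi_C$ gives
\[ H_{C',D}(\sigma) \cdot H_{C,D'}(\pi\tau) = \phi_{C'} \circ \sigma \circ \psi_D \circ \phi_C \circ \pi \circ \tau \circ \psi_{D'} = \phi_{C'} \circ (\sigma\rho\pi\tau) \circ \psi_{D'}. \]
Since $\rho$ is assumed bijective, $\sigma\rho\pi\tau \in \mathfrak{S}_i$, and so the right-hand side is $H_{C',D'}(\sigma\rho\pi\tau)$ by Definition \ref{dd2.2}. Extending $R$-linearly in $\sigma$ and $\tau$ recovers $H_{C',D'}(x\rho\pi y)$.

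For part (a), the same manipulation yields the single element $\phi_{C'} \circ \sigma \circ \rho \circ \tau \circ \psi_{D'}$ of $\bar\tau_r$. I would then argue by tracking image sizes: $\rho$ fixes $0$ and fails to be a bijection of $\bar i$, so $|\rho(\bar i) \setminus \{0\}| \le i-1$. Because $\sigma$ and $\tau$ are bijections of $\bar i \cup \{0\}$ fixing $0$, composing them on either side of $\rho$ preserves the cardinality of the nonzero image. Finally $\phi_{C'}$ is injective on $\bar i$, while composing with $\psi_{D'}$ on the right only restricts the domain of the composite. Hence each term has index at most $i-1$ and lies in $J_{i-1}$, and the same holds after $R$-linear extension. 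The main obstacle is merely keeping the direction of composition straight in part (a) and confirming that the rightmost $\psi_{D'}$ cannot conjure up missing image points; once the six-term composition chain is spelled out, both facts are automatic.
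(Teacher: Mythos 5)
Your argument is correct and is the natural direct computation; the paper omits this proof (deferring to \cite{May3}), but the approach there is essentially the same reduction by bilinearity to monomials, with part (b) following from associativity of composition and $\rho=\psi_D\circ\phi_C$, and part (a) from the observation that the composite $\phi_{C'}\circ\sigma\circ\rho\circ\tau\circ\psi_{D'}$ has at most $\left|\rho(\bar i)\setminus\{0\}\right|\le i-1$ nonzero image points, hence index at most $i-1$. Nothing further is needed.
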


\begin{lemma} \label{ll3.2}
  Let $i \in I\left( M \right)$, $\lambda  \in \Lambda \left( i \right)$, $(C,s) \in L\left( \lambda  \right)$, and $\left( {D,t} \right) \in R\left( \lambda  \right)$.  Then
\begin{enumerate} [\upshape(a)] 
\item  If  $\rho  = \psi _D  \circ \phi _C :\bar i \to \bar i$ is not bijective, $\left\langle {C_{\left( {D,t} \right)} ^\lambda  ,\,_{\left( {C,s} \right)} C^\lambda  } \right\rangle  = 0$.
\item If $\rho  = \psi _D  \circ \phi _C :\bar i \to \bar i$ is bijective, then for any $\pi  \in \mathfrak{S}_i $,
\[ \left\langle {C_{\left( {D,t} \right)} ^\lambda  ,\,\pi ' \cdot \,_{\left( {C,s} \right)} C^\lambda  } \right\rangle  = \left\langle {C_t ^\lambda  ,\rho \pi  \cdot \,_s C^\lambda  } \right\rangle _i . \] 
\end{enumerate}
  Here $\left\langle { - , - } \right\rangle _i $ is the bracket in the cellular algebra $R\left[ {\mathfrak{S}_i } \right]$ and $\pi ' = H_{C,D''} \left( \pi  \right)$ for any $D'' \in D\left( {M,i,r} \right)$ such that $\psi _{D''}  \circ \phi _C  = id:\bar i \to \bar i$.
\end{lemma}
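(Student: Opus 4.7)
The plan is to test both brackets against a common three-factor product in $R[M]$, unpack the factors using the bijections $H_{C,D}$ of Definition \ref{dd2.2}, and reduce everything to the already-established cellular structure on $R[\mathfrak{S}_i]$. Fix auxiliary indices $(C_0,s_0)\in L(\lambda)$ and $(D_0,t_0)\in R(\lambda)$. By Lemma \ref{l2.1}(c), the value $\langle C^\lambda_{(D,t)}, \pi' \cdot {}_{(C,s)}C^\lambda\rangle$ is precisely the scalar $(\star)$ appearing in
\[ {}_{(C_0,s_0)}C^\lambda_{(D,t)} \cdot \pi' \cdot {}_{(C,s)}C^\lambda_{(D_0,t_0)} \equiv (\star)\cdot {}_{(C_0,s_0)}C^\lambda_{(D_0,t_0)} \bmod \hat{A}^\lambda, \]
and the plain bracket of part (a) is obtained by deleting $\pi'$ from this same product.

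For part (a), the product unravels to $H_{C_0,D}({}_{s_0}C^\lambda_t) \cdot H_{C,D_0}({}_sC^\lambda_{t_0})$. Since $\rho=\psi_D\circ\phi_C$ is non-bijective, Lemma \ref{ll3.1}(a) places this element in $J_{i-1}$, the $R$-span of basis elements of index strictly less than $i$. Each such element has the form ${}_{(C',s')}C^\mu_{(D',t')}$ with $i(\mu)<i(\lambda)$, so $\mu>\lambda$ in the partial order on $\Lambda$; hence $J_{i-1}\subseteq \hat{A}^\lambda$ and $(\star)=0$. For part (b), the three-factor product becomes
\[ H_{C_0,D}({}_{s_0}C^\lambda_t) \cdot H_{C,D''}(\pi) \cdot H_{C,D_0}({}_sC^\lambda_{t_0}). \]
Two applications of Lemma \ref{ll3.1}(b) collapse this: the first uses the hypothesis that $\psi_D\circ\phi_C=\rho$ is bijective and produces $H_{C_0,D''}({}_{s_0}C^\lambda_t\cdot\rho\pi)$; the second uses the defining property $\psi_{D''}\circ\phi_C=id$ of $D''$, which is trivially bijective, and yields $H_{C_0,D_0}({}_{s_0}C^\lambda_t\cdot \rho\pi\cdot {}_sC^\lambda_{t_0})$. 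Inside the cellular algebra $R[\mathfrak{S}_i]$, bilinearity of the symmetric-group bracket combined with Lemma \ref{l2.1}(c) yields
\[ {}_{s_0}C^\lambda_t \cdot (\rho\pi) \cdot {}_sC^\lambda_{t_0} \equiv \langle C^\lambda_t,\rho\pi\cdot{}_sC^\lambda\rangle_i \cdot {}_{s_0}C^\lambda_{t_0} \bmod \hat{A}_i^\lambda. \]
Applying $H_{C_0,D_0}$ and invoking Lemma \ref{ll2.4} to send $\hat{A}_i^\lambda$ into $\hat{A}^\lambda$ identifies $(\star)$ with the $R[\mathfrak{S}_i]$-bracket on the right, as required.

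The principal obstacle is the bookkeeping: the length-three product of $H$-images must be broken up so that Lemma \ref{ll3.1}(b) applies twice with the correct middle indices, and the auxiliary choice of $D''$ with $\psi_{D''}\circ\phi_C=id$ has to be inserted exactly so that the second application becomes trivial, leaving only the $\rho\pi$ from the first application inside the final $H_{C_0,D_0}$. Once this choreography is arranged, the remainder of the argument is a clean transfer from the established cellularity of $R[\mathfrak{S}_i]$.
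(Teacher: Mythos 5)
Your proof is correct and follows what is clearly the intended route: the paper itself omits this proof (deferring to \cite{May3}), but the tools it sets up --- the triple-product characterization of the bracket, Lemma \ref{ll3.1} applied twice with the middle index $D''$ chosen so that $\psi_{D''}\circ\phi_C=id$, the inclusion $J_{i-1}\subseteq\hat A^{\lambda}$ from the index-reversing partial order, and Lemma \ref{ll2.4} to transfer $\hat A_i^{\lambda}$ into $\hat A^{\lambda}$ --- are exactly the ones you use. The only cosmetic point is that the identification of $\left\langle {C_{\left( {D,t} \right)}^\lambda ,\,a \cdot {}_{\left( {C,s} \right)}C^\lambda } \right\rangle$ with the coefficient in the three-factor product is not literally Lemma \ref{l2.1}(c) but follows from it in one line using property (i), the ideal property of $\hat A^{\lambda}$, and bilinearity of the bracket.
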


Recall that the radical, ${\text{rad}}\left( {C_R ^\lambda  } \right)$, of a right cell module is the right $A$-module given by ${\text{rad}}\left( {C_R ^\lambda  } \right) = \left\{ {x \in C_R ^\lambda  :\left\langle {x,y} \right\rangle  = 0{\text{ for all }}y \in \,_L C^\lambda  } \right\}$.

\begin{proposition} \label{pp3.1}
  Let  $i \in I\left( M \right)$, $\lambda  \in \Lambda \left( i \right)$.   Then ${\text{rad}}\left( {C_R ^\lambda  } \right) = C_R ^\lambda  {\text{ in }}A \Leftrightarrow {\text{rad}}\left( {C^\lambda  } \right) = C^\lambda  {\text{ in }}A_i $.
\end{proposition}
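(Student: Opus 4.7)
My plan is to deduce the biconditional from a single clean identity. By Lemma~\ref{l3.4}, the condition $\rad(C_R^\lambda) = C_R^\lambda$ in $A$ is equivalent to the vanishing of every basis pairing $\langle C_{(D,t)}^\lambda, {}_{(C,s)} C^\lambda\rangle$, and likewise $\rad(C^\lambda) = C^\lambda$ in $A_i$ is equivalent to every $\langle C_t^\lambda, {}_s C^\lambda\rangle_i$ being zero. Thus it suffices to exhibit a relation between these two families of scalars that makes each implication transparent.

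The first step is to extract the formula
\[
\langle C_{(D,t)}^\lambda,\, {}_{(C,s)} C^\lambda\rangle \;=\; \begin{cases} 0 & \text{if } \rho := \psi_D \circ \phi_C \text{ is not bijective,}\\ \langle C_t^\lambda,\, \rho \cdot {}_s C^\lambda\rangle_i & \text{if } \rho \text{ is bijective.}\end{cases}
\]
The nonbijective case is Lemma~\ref{ll3.2}(a) verbatim. For the bijective case, I would apply Lemma~\ref{ll3.2}(b) with $\pi = e$, the identity of $\mathfrak{S}_i$; the only point requiring verification is that the element $\pi' = H_{C,D''}(e)$ appearing there acts as the identity on ${}_{(C,s)} C^\lambda$. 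This follows from Lemma~\ref{ll3.1}(b) applied to $H_{C,D''}(e) \cdot H_{C,D}({}_s C_t^\lambda)$: since $\psi_{D''} \circ \phi_C = id$ by the choice of $D''$, that lemma collapses the product to $H_{C,D}({}_s C_t^\lambda) = {}_{(C,s)} C_{(D,t)}^\lambda$, and passing to $A^\lambda/\hat A^\lambda$ gives $\pi' \cdot {}_{(C,s)} C^\lambda = {}_{(C,s)} C^\lambda$.

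With the formula in hand, the forward implication ($A_i$-radical full $\Rightarrow$ $A$-radical full) is immediate, since $\rho \cdot {}_s C^\lambda \in C^\lambda$ forces both branches of the formula to vanish under the hypothesis. For the reverse implication it suffices to produce even a single pair $(C_0,D_0)$ with $\rho_0 = \psi_{D_0} \circ \phi_{C_0}$ bijective. Since $i \in I(M)$ the set $D(M,i,r)$ is nonempty; pick any $D_0$ in it, choose one representative from each of its disjoint blocks to form a transversal, and sort the transversal in increasing integer order to obtain $C_0 \in C(i,r)$. The permutation $\rho_0 \in \mathfrak{S}_i$ is then precisely the map recording which block each sorted representative came from. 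By hypothesis $\langle C_{(D_0,t)}^\lambda, {}_{(C_0,s)} C^\lambda\rangle = 0$, so the formula yields $\langle C_t^\lambda, \rho_0 \cdot {}_s C^\lambda\rangle_i = 0$ for all standard $\lambda$-tableaux $s,t$. Because $\rho_0$ is a unit in $R[\mathfrak{S}_i]$, left multiplication by $\rho_0$ is an $R$-linear automorphism of $C^\lambda$, so $\{\rho_0 \cdot {}_s C^\lambda\}_s$ is still a free $R$-basis of $C^\lambda$; the bilinear pairing $\langle\cdot,\cdot\rangle_i$ vanishes on a pair of bases and therefore vanishes identically, giving $\rad(C^\lambda) = C^\lambda$ in $A_i$.

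The main obstacle is the bookkeeping behind the bracket formula, particularly the reduction of Lemma~\ref{ll3.2}(b) at $\pi = e$ to a statement about the basis bracket itself; this step hinges on the existence of some $D'' \in D(M,i,r)$ with $\psi_{D''} \circ \phi_C = id$ and on the computation via Lemma~\ref{ll3.1} showing $\pi'$ acts trivially on the cell-module basis element. Once this bookkeeping is settled, both directions reduce to the observation that invertibility of $\rho_0$ in $\mathfrak{S}_i$ lets one transfer "bracket vanishes on one $R$-basis of $C^\lambda$" to "bracket vanishes identically."
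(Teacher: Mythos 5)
Your proposal is correct and takes essentially the same route as the paper: both directions rest on Lemma \ref{ll3.2} applied to a pair $(C,D)$ with $\rho=\psi_D\circ\phi_C$ bijective, and your verification via Lemma \ref{ll3.1}(b) that $\pi'$ acts trivially on ${}_{(C,s)}C^\lambda$ merely fills in a detail the paper asserts without comment. The only (harmless) deviation is in the reverse direction, where the paper takes $\pi=\rho^{-1}$ in Lemma \ref{ll3.2}(b) to cancel $\rho$ outright, while you take $\pi=e$ and then use that $\rho_0$ is a unit of $R[\mathfrak{S}_i]$, so $\left\{\rho_0\cdot{}_sC^\lambda\right\}$ is again an $R$-basis and the vanishing of the bracket on it forces it to vanish identically.
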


\begin{proof}
  Assume first that ${\text{rad}}\left( {C^\lambda  } \right) = C^\lambda  {\text{ in }}A_i $, so $\left\langle {x,y} \right\rangle _i  = 0$ for all $x,y$.  To show ${\text{rad}}\left( {C_R ^\lambda  } \right) = C_R ^\lambda  $ it suffices to show that $\left\langle {C_{\left( {D,t} \right)}^\lambda  ,{}_{\left( {C,s} \right)}C^\lambda  } \right\rangle  = 0$ for any $(D,t) \in R(\lambda ),(C,s) \in L\left( \lambda  \right)$.  If   $\rho  = \psi _D  \circ \phi _C :\bar i \to \bar i$ is not bijective, lemma \ref{ll3.2}(a) gives $\left\langle {C_{\left( {D,t} \right)} ^\lambda  ,\,_{\left( {C,s} \right)} C^\lambda  } \right\rangle  = 0$.  If $\rho  = \psi _D  \circ \phi _C :\bar i \to \bar i$ is bijective, take $\pi $ in lemma \ref{ll3.2} to be the identity so that $\pi '{}_{\left( {C,s} \right)}C^\lambda   = {}_{\left( {C,s} \right)}C^\lambda  $.  Then by lemma \ref{ll3.2}(b), $\left\langle {C_{\left( {D,t} \right)} ^\lambda  ,_{\left( {C,s} \right)} C^\lambda  } \right\rangle  = \left\langle {C_{\left( {D,t} \right)} ^\lambda  ,\,\pi ' \cdot \,_{\left( {C,s} \right)} C^\lambda  } \right\rangle  = \left\langle {C_t ^\lambda  ,\rho  \cdot \,_s C^\lambda  } \right\rangle _i  = 0$.

     Now assume  ${\text{rad}}\left( {C_R ^\lambda  } \right) = C_R ^\lambda  {\text{ in }}A$, so $\left\langle {x,y} \right\rangle  = 0$ for any $x \in C_R ^\lambda  ,y \in \,_L C^\lambda  $.  To show ${\text{rad}}\left( {C^\lambda  } \right) = C^\lambda  {\text{ in }}A_i $ it suffices to show that  $\left\langle {C_t^\lambda  ,{}_sC^\lambda  } \right\rangle _i  = 0$ for any $t,s$.  Take any $D \in D\left( {M,i,r} \right)$ and choose $C \in C\left( {i,r} \right)$ such that $\rho  = \psi _D  \circ \phi _C $ is bijective.  Then apply lemma \ref{ll3.2}(b) with $\pi  = \rho ^{ - 1} $ to get  $\left\langle {C_t ^\lambda  ,\,_s C^\lambda  } \right\rangle _i  = \left\langle {C_t ^\lambda  ,\rho \pi  \cdot \,_s C^\lambda  } \right\rangle _i  = \left\langle {C_{\left( {D,t} \right)} ^\lambda  ,\,\pi ' \cdot \,_{\left( {C,s} \right)} C^\lambda  } \right\rangle  = 0$.
\end{proof}

     Note:  In the special case when $0 \in I\left( M \right)$, so $\Lambda \left( 0 \right) = \left\{ {\lambda _0 } \right\} \subseteq \Lambda $, the cell modules $C_R^{\lambda _0 } ,{}_LC^{\lambda _0 } $ are one dimensional with generators $C_\emptyset ^{\lambda _0 } ,{}_\emptyset C^{\lambda _0 } $ and $\left\langle {C_\emptyset ^\lambda  ,{}_\emptyset C^\lambda  } \right\rangle  = 1$ (since $z \cdot z = z$ where $z = {}_\emptyset C_\emptyset ^{\lambda _0 } :j \mapsto 0$ for all $j \in \bar i \cup \left\{ 0 \right\}$).  Then ${\text{rad}}\left( {C_R^{\lambda _0 } } \right) = 0$.

\section  {Generalized Schur algebras}

     For a monoid $M$ as in section 3 and a domain $R$, a ``generalized Schur algebra'',  $S\left( {M,R} \right)$, was defined in \cite{MA} and \cite{May}.  This algebra is isomorphic to $R \otimes A^\mathbb{Z} $ where $A^\mathbb{Z} $ is a certain  ``$\mathbb{Z}$-form'' .  As shown in \cite{May2}, there are actually two relevant $\mathbb{Z}$-forms, the left and right generalized Schur algebras $LGS\left( {M,{\mathbf{G}}} \right) = A_L ^\mathbb{Z} \,{\text{ and  }}RGS\left( {M,{\mathbf{G}}} \right) = A_R ^\mathbb{Z} $, corresponding to the monoid $M$ and the family of subgroups ${\mathbf{G}} = \left\{ {\mathfrak{S}_\mu  :\mu  \in \Lambda \left( {r,n} \right)} \right\}$ .  Here $\Lambda \left( {r,n} \right)$ is the set of all compositions of $r$ with $n$ parts, $\mathfrak{S}_\mu  $ is the ``Young subgroup'' corresponding to $\mu  \in \Lambda \left( {r,n} \right)$, and we will always assume $n \geqslant r $. We sketch the description of these two algebras; for details see \cite{May2}.

   For compositions $\mu ,\nu  \in \Lambda \left( {r,n} \right)$, let ${}^\mu A^\nu  $ be the $\mathbb{Z}$-submodule of $A = \mathbb{Z}\left[ M \right]$ which is invariant under the action of  $\mathfrak{S}_\mu  $ on the left and $\mathfrak{S}_\nu  $ on the right.  Let $\,_\mu  M_\nu  $ be the set of double cosets of the form $\mathfrak{S}_\mu  m\mathfrak{S}_\nu  $, $m \in M$.  For ${\mathbf{D}} \in \,_\mu  M_\nu  $, define $X\left( {\mathbf{D}} \right) = \sum\limits_{m \in {\mathbf{D}}} m  \in \mathbb{Z}\left[ M \right]$.   Then ${}^\mu A^\nu  $ is a free $\mathbb{Z}$-module with basis $\left\{ {X\left( {\mathbf{D}} \right):{\mathbf{D}} \in \,_\mu  M_\nu  } \right\}$.   Let $\bar A = \mathop  \oplus \limits_{\mu ,\nu  \in \Lambda \left( {r,n} \right)} \,^\mu  A^\nu  $, the direct sum of disjoint copies of submodules of $A$.  Then $\bar A$ is a free $\mathbb{Z}$ module with basis $\left\{ {b_{\mathbf{D}}  = X\left( {\mathbf{D}} \right):{\mathbf{D}} \in \,_\mu  M_\lambda  ,\mu ,\nu  \in \Lambda \left( {r,n} \right)} \right\}$.  Notice that if $D_1  \in \,_\mu  M_\nu  ,D_2  \in \,_\nu  M_\pi  $, then the product  $X\left( {D_1 } \right)X\left( {D_2 } \right)$ (defined in $A$) is invariant under multiplication by $\mathfrak{S}_\mu  $ on the left and by $\mathfrak{S}_\pi  $ on the right, i.e., $X\left( {D_1 } \right)X\left( {D_2 } \right) \in \,^\mu  A^\pi  $.  It is therefore a $\mathbb{Z}$-linear combination of $\left\{ {X\left( D \right):D \in \,_\mu  M_\pi  } \right\}$:  $X\left( {D_1 } \right)X\left( {D_2 } \right) = \sum\limits_{D \in \,_\mu  M_\pi  } {a\left( {D_1 ,D_2 ,D} \right)X\left( D \right)} $, with coefficients $a\left( {D_1 ,D_2 ,D} \right) \in \mathbb{Z}$.   Then an associative, bilinear product on $\bar A$ is defined on the basis elements $b_{D_i }  = X\left( {D_i } \right)$
corresponding to $D_1  \in \,_\mu  M_\nu  ,D_2  \in \,_\rho  M_\pi  $ by 
\[b_{D_1 } b_{D_2 }  = 
  \begin{cases}
   \sum\limits_{D \in \,_\mu  M_\pi  } {a\left( {D_1 ,D_2 ,D} \right)b_D } &\text{if $\nu  = \rho $}  \\
   \quad 0                  &\text{if $\nu  \ne \rho$ .}
   \end{cases}
\]
   $\bar A$ with this multiplication fails (in general) to have an identity.  To obtain the $\mathbb{Z}$-forms $A_L ^\mathbb{Z} \,{\text{ and  }}A_R ^\mathbb{Z} $ , which are $\mathbb{Z}$-algebras with identity, we define new ``left'' and ``right'' products, 
$ * _L $ and $ * _R $ on $\bar A$.

     For ${\mathbf{D}} \in \,_\mu  M_\nu  $, let $n_L \left( {\mathbf{D}} \right)$ be the number of elements in any left $\mathfrak{S}_\mu  $-coset $C \subseteq {\mathbf{D}}$.  Then the product $ * _L $ is defined on the basis elements $b_{{\mathbf{D}}_i }  = X\left( {{\mathbf{D}}_i } \right)$ corresponding to ${\mathbf{D}}_1  \in \,_\mu  M_\nu  ,{\mathbf{D}}_2  \in \,_\rho  M_\pi  $ by 
\[b_{{\mathbf{D}}_1 }  * _L b_{{\mathbf{D}}_2 }  =
     \begin{cases}
       \sum\limits_{{\mathbf{D}} \in \,_\mu  M_\pi  } {\frac{{n_L \left( {\mathbf{D}} \right)}}
   {{n_L \left( {{\mathbf{D}}_1 } \right)n_L \left( {{\mathbf{D}}_2 } \right)}} \, a\left( {{\mathbf{D}}_1 ,{\mathbf{D}}_2 ,{\mathbf{D}}} \right)b_{\mathbf{D}} } &\text{if $\nu  = \rho $}  \\
   \quad 0   &\text{if $\nu  \ne \rho$ .}
   \end{cases}
   \]
      Similarly, for ${\mathbf{D}} \in \,_\mu  M_\nu  $, let $n_R \left( {\mathbf{D}} \right)$ be the number of elements in any right $\mathfrak{S}_\nu  $-coset $C \subseteq {\mathbf{D}}$.  Then the product $ * _R $ is defined on the basis elements $b_{{\mathbf{D}}_i }  = X\left( {{\mathbf{D}}_i } \right)$ corresponding to ${\mathbf{D}}_1  \in \,_\mu  M_\nu  ,{\mathbf{D}}_2  \in \,_\rho  M_\pi  $ by 
    \[b_{{\mathbf{D}}_1 }  * _R b_{{\mathbf{D}}_2 }  = 
       \begin{cases}
       \sum\limits_{{\mathbf{D}} \in \,_\mu  M_\pi  } {\frac{{n_R \left( {\mathbf{D}} \right)}}
       {{n_R \left( {{\mathbf{D}}_1 } \right)n_R \left( {{\mathbf{D}}_2 } \right)}} \, a\left( {{\mathbf{D}}_1 ,{\mathbf{D}}_2 ,{\mathbf{D}}} \right)b_{\mathbf{D}} } &\text{if $\nu  = \rho$ }  \\
       \quad 0     &\text{if $\nu  \ne \rho$ .}
       \end{cases}
   \]    
   As shown in \cite{May2}, the structure constants for these multiplications are in fact in $\mathbb{Z}$ and the resulting $\mathbb{Z}$-algebras, $A_L ^\mathbb{Z} \,{\text{ and  }}A_R ^\mathbb{Z} $ , have identities.  We will show that both of these algebras, and the associated generalized Schur algebras $R \otimes A_L ^\mathbb{Z} \,{\text{ and  }}R \otimes A_R ^\mathbb{Z} $ for any domain $R$, have cell bases and are cell algebras.     

      We use the notation and definitions of section 3.  For a composition $\mu  \in \Lambda \left( {r,n} \right)$, $\mathfrak{S}_\mu  $
 acts on the left on $C\left( {i,r} \right)$ :  if $C = \left\{ {c_1 ,c_2 , \cdots ,c_i } \right\} \in C\left( {i,r} \right)$ and  $\sigma  \in \mathfrak{S}_\mu  $  define $\sigma C = \left\{ {\sigma c_1 , \cdots ,\sigma c_i } \right\}$.  Then write $O\left( {\mu ,C} \right) = \left\{ {\rho C:\rho  \in \mathfrak{S}_\mu  } \right\}$ for the orbit of $C \in C\left( {i,r} \right)$ under $\mathfrak{S}_\mu  $.  Similarly, for a composition $\nu  \in \Lambda \left( {r,n} \right)$, $\mathfrak{S}_\nu  $
 acts on the right on $D\left( {M,i,r} \right)$:  for $D = \left\{ {d_1 ,d_2 , \cdots ,d_i } \right\} \in D\left( {M,i,r} \right)$ and $\pi  \in \mathfrak{S}_\nu  $, $D\pi  = \left\{ {\pi ^{ - 1} \left[ {d_1 } \right], \cdots ,\pi ^{ - 1} [d_i ]} \right\}$.  Write $O\left( {\nu ,D} \right) = \left\{ {D\pi :\pi  \in \mathfrak{S}_\nu  } \right\}$ for the orbit of $D \in D\left( {M,i,r} \right)$ under $\mathfrak{S}_\nu  $.  Any double coset ${\mathbf{D}} = \mathfrak{S}_\mu  \alpha \mathfrak{S}_\nu   \in \,_\mu  M_\nu  $ has a well defined index $i$ since any $\beta  \in {\mathbf{D}}$ has the same index as $\alpha $.  We also have $O\left( {\mu ,C_\beta  } \right) = O\left( {\mu ,C_\alpha  } \right)$ for any $\beta  \in {\mathbf{D}}$, so ${\mathbf{D}}$ has a well defined orbit $O\left( {\mu ,{\mathbf{D}}} \right) = O\left( {\mu ,C_\alpha  } \right)$
  for the action of $\mathfrak{S}_\mu  $ on $C\left( {i,r} \right)$.   Similarly, ${\mathbf{D}}$ has a well defined orbit $O\left( {\nu ,{\mathbf{D}}} \right) = O\left( {\nu ,D_\alpha  } \right)$  for the action of $\mathfrak{S}_\nu  $ on $D\left( {M,i,r} \right)$ .  Let ${\mathbf{O}}_{\mu ,C\left( {i,r} \right)} $ be the set of orbits for the action of $\mathfrak{S}_\mu  $ on $C\left( {i,r} \right)$ so $C\left( {i,r} \right) = \bigcup\limits_{O \in {\mathbf{O}}_{\mu ,C\left( {i,r} \right)} } O $.  Similarly, let ${\mathbf{O}}_{\nu ,D(M,i,r)} $ be the set of orbits for the action of $\mathfrak{S}_\nu  $ on $D\left( {M,i,r} \right)$ so $D\left( {M,i,r} \right) = \bigcup\limits_{O \in {\mathbf{O}}_{\nu ,D\left( {M,i,r} \right)} } O $.   Define the set of orbit pairs ${\mathbf{O}}\left( {\mu ,\nu ,M,i} \right) = {\mathbf{O}}_{\mu ,C\left( {i,r} \right)}  \times {\mathbf{O}}_{\nu ,D\left( {M,i,r} \right)}  = \left\{ {\left( {O_\mu  ,O_\nu  } \right):O_\mu   \in {\mathbf{O}}_{\mu ,C\left( {i,r} \right)} ,O_\nu   \in {\mathbf{O}}_{\nu ,D\left( {M,i,r} \right)} } \right\}$.  Finally, for $\left( {O_\mu  ,O_\nu  } \right) \in {\mathbf{O}}\left( {\mu ,\nu ,M,i} \right)$ define a collection of double cosets 
  \[M\left( {O_\mu  ,O_\nu  } \right) = \left\{ {{\mathbf{D}} \in \,_\mu  M_\nu  :{\text{index}}({\mathbf{D}}) = i,O\left( {\mu ,{\mathbf{D}}} \right) = O_\mu  ,O\left( {\nu ,{\mathbf{D}}} \right) = O_\nu  } \right\}.\]
    Then for orbits $O_\mu   \in {\mathbf{O}}_{\mu ,C\left( {i,r} \right)} \,,\,O_\nu   \in {\mathbf{O}}_{\nu ,D\left( {M,i,r} \right)} $ define ${}^{O_\mu  }A^{O_\nu  } $ to be the free $\mathbb{Z}$-module with basis $\left\{ {X\left( {\mathbf{D}} \right):{\mathbf{D}} \in M\left( {O_\mu  ,O_\nu  } \right)} \right\}$.  Evidently 
    \[{}^\mu A^\nu   = \mathop  \oplus \limits_{i \in I\left( M \right)} \mathop  \oplus \limits_{(O_\mu  ,O_\nu  ) \in {\mathbf{O}}\left( {\mu ,\nu ,M,i} \right)} {}^{O_\mu  }A^{O_\nu  } .\]
      We will obtain a cell basis for $\bar A = \mathop  \oplus \limits_{\mu ,\nu  \in \Lambda \left( {r,n} \right)} \,^\mu  A^\nu  $ by taking the union of bases for the individual submodules ${}^{O_\mu  }A^{O_\nu  } $.       

      For $\mu  \in \Lambda \left( {r,n} \right)$ and $C \in C\left( {i,r} \right)$ let $\mu \left( C \right) \in \Lambda \left( {i,n} \right)$ be the composition of $i$ obtained as follows:   if $b_j ^\mu  ,\,j \in \bar n,$ is the $j$th block of $\mu $, then the $j$th block of $\mu \left( C \right)$ is given by  $b_j ^{\mu \left( C \right)}  = \phi _C ^{ - 1} \left( {b_j ^\mu  } \right),\,j \in \bar n$ .  Notice that $\mu \left( C \right)_j  = \left| {b_j ^\mu   \cap C} \right|$,  the number of the $i$ elements in $C$ which lie in the $j$th block of $\mu $.  Since elements of $\mathfrak{S}_\mu  $ preserve the blocks of $\mu $  (by definition), the composition $\mu \left( C \right)$ depends only on the orbit $O\left( {\mu ,C} \right)$, that is, $\mu \left( {\rho C} \right) = \mu \left( C \right)$ for any $\rho  \in \mathfrak{S}_\mu  $.  Let $\mathfrak{S}_{\mu \left( C \right)}  \subseteq \mathfrak{S}_i $ be the corresponding Young subgroup.

\begin{lemma}\label{l10.1}
  Given $C \in C\left( {i,r} \right)$ and $\rho  \in \mathfrak{S}_\mu  $ , let $\rho C \in O\left( {\mu ,C} \right)$ be the image of $C$ under $\rho $.  Then there exists a unique $\rho _C  \in \mathfrak{S}_{\mu \left( C \right)} $ such that $\rho  \cdot \phi _C  = \phi _{\rho C}  \cdot \rho _C $.  Conversely, given any $\rho _C  \in \mathfrak{S}_{\mu \left( C \right)} $ and any $\rho C \in O\left( {\mu ,C} \right)$, there exists a $\rho ' \in \mathfrak{S}_\mu  $ such that $\rho ' \cdot \phi _C  = \phi _{\rho C}  \cdot \rho _C $ and $\rho 'C = \rho C \in O\left( {\mu ,C} \right)$.
\end{lemma}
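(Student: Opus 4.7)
The plan is to unpack the definitions and construct $\rho_C$ explicitly, then verify that it is well-defined, lies in the prescribed Young subgroup, and is unique. Recall that $\phi_C(j) = c_j$ where $C = \{c_1 < \cdots < c_i\}$, and similarly $\phi_{\rho C}(j) = c'_j$ where we list $\rho C = \{c'_1 < \cdots < c'_i\}$ in increasing order. The identity $\rho \cdot \phi_C = \phi_{\rho C} \cdot \rho_C$ forces $\rho(c_j) = c'_{\rho_C(j)}$, so we simply define $\rho_C(j)$ to be the position of $\rho(c_j)$ in the sorted list of $\rho C$. Since $\rho$ is a bijection, this $\rho_C$ is a permutation of $\bar i$, and it is manifestly the only permutation satisfying the displayed equation; so uniqueness is free.

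The substantive step is checking that $\rho_C \in \mathfrak{S}_{\mu(C)}$, i.e., that $\rho_C$ preserves each block $b_k^{\mu(C)} = \phi_C^{-1}(b_k^\mu)$. If $j \in b_k^{\mu(C)}$ then $c_j \in b_k^\mu$; since $\rho \in \mathfrak{S}_\mu$ preserves the block $b_k^\mu$, we get $\rho(c_j) \in b_k^\mu$, and therefore $c'_{\rho_C(j)} \in b_k^\mu$, which means $\rho_C(j) \in b_k^{\mu(C)}$. This is the key observation: blocks of $\mu(C)$ are by construction the inverse images under $\phi_C$ of the blocks of $\mu$, so $\mathfrak{S}_\mu$-invariance of the $b_k^\mu$ translates directly into $\mathfrak{S}_{\mu(C)}$-invariance of the $b_k^{\mu(C)}$.

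For the converse, I would first use the existence direction to pick any $\rho_0 \in \mathfrak{S}_\mu$ with $\rho_0 C = \rho C$, producing an element $(\rho_0)_C \in \mathfrak{S}_{\mu(C)}$, but this need not equal the prescribed $\rho_C$. Instead, I would build $\rho'$ directly: define $\rho'$ on $C$ by $\rho'(c_j) = \phi_{\rho C}(\rho_C(j)) = c'_{\rho_C(j)}$. The argument of the previous paragraph, applied in reverse, shows that $\rho'$ maps $C \cap b_k^\mu$ bijectively onto $\rho C \cap b_k^\mu$ for each block $b_k^\mu$. Since $|C \cap b_k^\mu| = \mu(C)_k = |\rho C \cap b_k^\mu|$, the complements $b_k^\mu \setminus C$ and $b_k^\mu \setminus \rho C$ have the same cardinality, and I extend $\rho'$ by choosing any bijection between them (for instance, the order-preserving one). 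Extending block by block gives an element of $\mathfrak{S}_\mu$ that satisfies $\rho' \cdot \phi_C = \phi_{\rho C} \cdot \rho_C$ and $\rho' C = \rho C$ by construction.

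The only nontrivial step is the block-preservation check that identifies $\rho_C$ as an element of $\mathfrak{S}_{\mu(C)}$ rather than merely of $\mathfrak{S}_i$; everything else is a direct translation of the definition $\phi_C(j) = c_j$. I expect no real obstacles — the structure of the lemma is an unraveling of how the Young subgroup $\mathfrak{S}_\mu$ acts through $\phi_C$.
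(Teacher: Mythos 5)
Your proposal is correct and follows essentially the same route as the paper: define $\rho_C$ by matching $\rho\cdot\phi_C$ against $\phi_{\rho C}$ (uniqueness is automatic), verify block-preservation using that $\rho\in\mathfrak{S}_\mu$ fixes each $b_k^\mu$ and that $\mu(\rho C)=\mu(C)$, and for the converse build $\rho'\in\mathfrak{S}_\mu$ block by block from the equality $\rho'\cdot\phi_C=\phi_{\rho C}\cdot\rho_C$ on $C$, extending arbitrarily on the complements within each block. Your version is just slightly more explicit about the extension step than the paper's.
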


\begin{proof}
  $\rho  \cdot \phi _C $ and $\phi _{\rho C} $ both map $\bar i$ one to one onto $\rho C$.  So define $\rho _C  \in \mathfrak{S}_i $ by letting $\rho _C \left( k \right)$ be the unique element in $\phi _{\rho C} ^{ - 1} \left[ {\rho  \cdot \phi _C \left( k \right)} \right]$.  Then $\rho _C $ is the unique element in $\mathfrak{S}_i $ such that  $\rho  \cdot \phi _C  = \phi _{\rho C}  \cdot \rho _C $ , and we need only prove that $\rho _C  \in \mathfrak{S}_{\mu \left( C \right)} $ .  So suppose $j$ is in the $k$th $\mathfrak{S}_{\mu \left( C \right)} $  block $b_k ^{\mu \left( C \right)} $.  We must show that $\rho _C \left( j \right) \in b_k ^{\mu \left( C \right)} $.  But  $j \in b_k ^{\mu \left( C \right)}  \Rightarrow \phi _C \left( j \right) \in b_k ^\mu   \Rightarrow \rho  \cdot \phi _C \left( j \right) \in b_k ^\mu  $ (since $\rho  \in \mathfrak{S}_\mu  $).  Then $\phi _{\rho C}  \cdot \rho _C \left( j \right) \in b_k ^\mu  $ which implies $\rho _C \left( j \right) \in b_k ^{\mu \left( {\rho C} \right)} $.  But $\mu \left( {\rho C} \right) = \mu \left( C \right)$, so $\rho _C \left( j \right) \in b_k ^{\mu \left( C \right)} $ as desired.

     Now take any $\rho _C  \in \mathfrak{S}_{\mu \left( C \right)} $ and any $\rho C \in O\left( {\mu ,C} \right)$.  $\phi _C $ and $\phi _{\rho C}  \cdot \rho _C $ both take the $\mu \left( C \right)_j $ elements in the $j$th block of $\mu \left( C \right)$ one to one onto  $\mu \left( C \right)_j $ elements in the $j$th block of $\mu $.  So there exists $\rho ' \in \mathfrak{S}_\mu  $ such that $\rho ' \cdot \phi _C  = \phi _{\rho C}  \cdot \rho _C $.  Evidently $\rho 'C = {\text{image}}\left( {\rho ' \cdot \phi _C } \right) = {\text{image}}\left( {\phi _{\rho C}  \cdot \rho _C } \right) = \rho C$. 
\end{proof}

     Next, let $P_r $ be the set of all $2^r $ subsets of $\bar r$.   For a composition $\nu  \in \Lambda \left( {r,n} \right)$, $\mathfrak{S}_\nu  $ acts on the right on $P_r $:  if $S \in P_r $ and $\pi  \in \mathfrak{S}_\nu  $, then $S\pi  = \pi ^{ - 1} \left( S \right) = \left\{ {\pi ^{ - 1} \left[ s \right]:s \in S} \right\} \in P_r $ .  Choose a total order on the orbits of  $\mathfrak{S}_\nu  $ acting on $P_r $ and then label the orbits $O_i $ so that $O_1  < O_2  <  \cdots  < O_{N_\nu  } $ where $N_\nu  $ is the number of orbits.  Then choose a total order of all the subsets in $P_r $ which is compatible with the ordering of the $\mathfrak{S}_\nu  $ orbits:  $S_{a_i }  \in O_{a_i } $ and $O_{a_1 }  < O_{a_2 }  \Rightarrow S_{a_1 }  < S_{a_2 } $.  We label the subsets $S_i $ so that $S_1  < S_2  <  \cdots  < S_{2^r } $. 

     Now consider an element $D \in D\left( {M,i,r} \right)$.  $D$ consists of $i$ sets in $P_r $, so $D = \left\{ {S_{a_1 }  < S_{a_2 }  <  \cdots  < S_{a_i } } \right\}$.  Define a composition $ \nu \left( D \right) \in \Lambda \left( {i,N_\nu  } \right) $ by $\nu \left( D \right)_j  = \left| {D \cap O_j } \right|\,,\,j \in 1,2, \cdots ,N_\nu  $.   Then $S_{a_k } ,S_{a_l } $  are in the same orbit $O_j $ if and only if $k,l$ are in the same block of the composition $\nu \left( D \right)$, $k,l \in b_j ^{\nu \left( D \right)} $.   $\mathfrak{S}_\nu  $ acts on the right on $D\left( {M,i,r} \right)$:  For $\pi  \in \mathfrak{S}_\nu  $,$D\pi  = \left\{ {S_{a_j } \pi :j \in \bar i} \right\} = \left\{ {\pi ^{ - 1} \left[ {S_{a_j } } \right]:j \in \bar i} \right\} \in D\left( {M,i,r} \right)$.  Since by definition $\mathfrak{S}_\nu  $ preserves orbits, $\left| {D\pi  \cap O_j } \right| = \left| {D \cap O_j } \right|$ for any $\pi  \in \mathfrak{S}_\nu  $,  So the composition $\nu \left( {D\pi } \right) = \nu \left( D \right)$
 depends only on the orbit $O\left( {\nu ,D} \right)$ of $D$ under the action of $\mathfrak{S}_\nu  $.  Let $\mathfrak{S}_{\nu \left( D \right)}  \subseteq \mathfrak{S}_i $ be the corresponding Young subgroup.  

\begin{lemma} \label{l10.2}
  Given $D \in D\left( {M,i,r} \right)$ and $\pi  \in \mathfrak{S}_\nu $, let $D\pi  \in O\left( {\nu ,D} \right)$ be the image of $D$ under $\pi $.  Then there exists a unique $\pi _D  \in \mathfrak{S}_{\nu \left( D \right)} $ such that $\psi _D  \cdot \pi  = \pi _D  \cdot \psi _{D\pi } $.  Conversely, given any $\pi _D  \in \mathfrak{S}_{\nu \left( D \right)} $ and any $D\pi  \in O\left( {\nu ,D} \right)$, there exists a $\pi ' \in \mathfrak{S}_\nu  $ such that $\psi _D  \cdot \pi ' = \pi _D  \cdot \psi _{D\pi } $ and $D\pi ' = D\pi  \in O\left( {\nu ,D} \right)$.
\end{lemma}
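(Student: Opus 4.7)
The plan is to mirror the proof of Lemma~\ref{l10.1}, with $\psi_D$ in place of $\phi_C$ and the left/right actions reversed throughout.

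For existence and uniqueness of $\pi_D$, write $D = \{d_{a_1} < \cdots < d_{a_i}\}$ and $D\pi = \{d_{b_1} < \cdots < d_{b_i}\}$. Each $d_{b_k}$ equals $\pi^{-1}[d_{a_{\sigma(k)}}]$ for a unique permutation $\sigma \in \mathfrak{S}_i$, and I would set $\pi_D = \sigma$. A short case check on $x \in d_{b_k}$, on $x$ outside every $d_{b_k}$, and on $x = 0$ verifies $\psi_D \circ \pi = \pi_D \circ \psi_{D\pi}$; uniqueness is immediate because $\psi_{D\pi}$ is surjective onto $\bar i \cup \{0\}$ (each $d_{b_k}$ being nonempty). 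To check $\pi_D \in \mathfrak{S}_{\nu(D)}$, I would use that $\pi \in \mathfrak{S}_\nu$ preserves the $\mathfrak{S}_\nu$-orbits on subsets of $\bar r$, so $d_{b_k} = \pi^{-1}[d_{a_{\sigma(k)}}]$ lies in the same orbit $O_l$ as $d_{a_{\sigma(k)}}$; combined with $\nu(D\pi) = \nu(D)$ and the description $k \in b_l^{\nu(D)} \Leftrightarrow d_{a_k} \in O_l$, this translates to $k \in b_l^{\nu(D)} \Leftrightarrow \sigma(k) \in b_l^{\nu(D)}$, as required.

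For the converse, given $\pi_D \in \mathfrak{S}_{\nu(D)}$ and $D\pi \in O(\nu, D)$, I would construct $\pi'$ block by block. The key input is that $\mathfrak{S}_\nu$-orbits on subsets of $\bar r$ are determined by the tuple of intersection sizes with the $\nu$-blocks; since $\pi_D$ preserves $\nu(D)$-blocks, $d_{b_k}$ and $d_{a_{\pi_D(k)}}$ lie in the same orbit $O_l$ and hence have the same intersection profile with each $\nu$-block. Within each $\nu$-block $b$, I would choose any bijection $b \to b$ carrying $d_{b_k} \cap b$ onto $d_{a_{\pi_D(k)}} \cap b$ for every $k$ and carrying $b \setminus \bigcup_k d_{b_k}$ onto $b \setminus \bigcup_j d_{a_j}$ (the leftovers match in size since $\bigcup_k d_{b_k} = \pi^{-1}[\bigcup_j d_{a_j}]$ has the same intersection with $b$ as $\bigcup_j d_{a_j}$). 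Assembling these block-wise bijections yields an element $\pi' \in \mathfrak{S}_\nu$, from which $D\pi' = D\pi$ and $\psi_D \circ \pi' = \pi_D \circ \psi_{D\pi}$ can be read off directly.

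The main obstacle is the converse: one must juggle several equalities of intersection sizes at once and ensure the block-wise matchings glue to a bona fide element of $\mathfrak{S}_\nu$. The cleanest write-up, I expect, fixes one $\nu$-block at a time, invokes the orbit-by-intersection-profile characterization, and only then reassembles the pieces into $\pi'$.
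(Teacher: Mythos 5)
Your proposal is correct and follows essentially the same route as the paper's proof: define $\pi_D$ by comparing the positions of the sets $\pi^{-1}[d_{a_j}]$ in the ordered listings of $D\pi$ and $D$, use that $\pi\in\mathfrak{S}_\nu$ preserves orbits together with the orbit-compatible ordering to place $\pi_D$ in $\mathfrak{S}_{\nu(D)}$, and build $\pi'$ piecewise from same-orbit bijections. Your block-by-block construction in the converse is only a presentational variant of the paper's use of the elements $\sigma_k\in\mathfrak{S}_\nu$, and in fact is slightly more careful, since you explicitly match the leftover set $b\setminus\bigcup_k d_{b_k}$ onto $b\setminus\bigcup_j d_{a_j}$ in each $\nu$-block, a point the paper's proof passes over when the union of the sets in $D$ is a proper subset of $\bar r$.
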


\begin{proof}
  Let $D = \left\{ {S_{a_1 }  < S_{a_2 }  <  \cdots  < S_{a_i } } \right\}$, so $D\pi  = \left\{ {\pi ^{ - 1} \left( {S_{a_j } } \right):j \in \bar i} \right\}$.  Arrange the sets in $D\pi $ in order and define $k(j) \in \bar i$ such that $\pi ^{ - 1} \left( {S_{a_j } } \right)$ is the $k(j)$th set in the sequence.  Then $\psi _D  \cdot \pi $   maps elements in $\pi ^{ - 1} \left( {S_{a_j } } \right)$ to $j$, while $\psi _{D\pi } $ maps elements in $\pi ^{ - 1} \left( {S_{a_j } } \right)$ to $k(j)$.  Define  $\pi _D  \in \mathfrak{S}_i $ by $\pi _D \left( {k(j)} \right) = j\,,\,j \in \bar i$.  Then $\pi _D $ is the unique element in $\mathfrak{S}_i $ such that $\psi _D  \cdot \pi  = \pi _D  \cdot \psi _{D\pi } $, and it remains to show that $\pi _D  \in \mathfrak{S}_{\nu \left( D \right)} $.  For this, we must show that $j$ and $k(j)$ are always in the same block of the composition $\nu \left( D \right)$.  But $D\pi $ and $D$ contain the same number of subsets in each orbit of $\mathfrak{S}_\nu  $.  So if $S_{a_j } $ and  $\pi ^{ - 1} \left( {S_{a_j } } \right)$ are both in the $l$th orbit of $\mathfrak{S}_\nu  $  , then their indices $j$ and $k(j)$ are both in the $l$th block of $\nu \left( D \right)$~.

     Now take any $\pi _D  \in \mathfrak{S}_{\nu \left( D \right)} $ and any $D\pi  \in O\left( {\nu ,D} \right)$.  Recall that $\bar r \supseteq \bigcup\limits_{k \in \bar i} {\pi ^{ - 1} \left( {S_{a_k } } \right)} $
and that the sets $\pi ^{ - 1} \left( {S_{a_k } } \right)$ are pairwise disjoint, so we can define a $\pi ' \in \mathfrak{S}_r $ by defining $\left. {\pi '} \right|\pi ^{ - 1} \left( {S_{a_k } } \right)$ for each $k$.  Suppose $k$ is in the $j$th block of $\nu \left( D \right)$.  Then both $(\psi _D )^{ - 1} (k) = S_{a_k } $ and $\left( {\pi _D  \cdot \psi _{D\pi } } \right)^{ - 1} (k) = \pi ^{ - 1} \left( {S_{a_{m\left( k \right)} } } \right)$ (for some $m(k)$)  are in the $j$th orbit of  $\mathfrak{S}_\nu  $ .  Define $\pi ' \in \mathfrak{S}_r $ by $\left. {\pi '} \right|\left( {\pi ^{ - 1} \left( {S_{a_{m(k)} } } \right)} \right) = \left. {\sigma _k } \right|\left( {\pi ^{ - 1} \left( {S_{a_{m(k)} } } \right)} \right)$ where $\sigma _k  \in \mathfrak{S}_\nu  $ maps $\pi ^{ - 1} \left( {S_{a_{m(k)} } } \right)$ one to one onto $S_{a_k } $.  Then $\left( {\psi _D  \cdot \pi '} \right)^{ - 1} \left( k \right) = \left( {\pi '} \right)^{ - 1} \left( {\psi _D ^{ - 1} (k)} \right) = \left( {\pi '} \right)^{ - 1} \left( {S_{a_k } } \right) = \pi ^{ - 1} \left( {S_{a_{m(k)} } } \right) = \left( {\pi _D  \cdot \psi _{D\pi } } \right)^{ - 1} (k)$ for all $k$.  So  $\psi _D  \cdot \pi ' = \pi _D  \cdot \psi _{D\pi } $  and it remains to show that $\pi ' \in \mathfrak{S}_\nu  $.  It suffices to show that if $l$ is in the $j$th block of $\nu $ then $\pi '\left( l \right)$ is also in the $j$th block.  But any $l$ is in a unique $\pi ^{ - 1} \left( {S_{a_k } } \right)$, so $\pi '\left( l \right) = \sigma _k \left( l \right)$ is in fact in the $j$th block since $\sigma _k  \in \mathfrak{S}_\nu  $. 
\end{proof}

     Consider compositions $\mu ,\nu  \in \Lambda \left( {r,n} \right)$ and an element $\alpha  \in M$ of index $i$.  There exist unique $\sigma _\alpha   \in \mathfrak{S}_i \,,\,C_\alpha   \in C(i,r)\,,\,D_\alpha   \in D(M,i,r)$ such that $\alpha  = \phi _{C_\alpha  }  \circ \sigma _\alpha   \circ \psi _{D_\alpha  } $.  For $C = \rho C_\alpha   \in O\left( {\mu ,C_\alpha  } \right)$, define  \[\phi _C  \cdot \mathfrak{S}_{\mu \left( C \right)}  \cdot \sigma _\alpha   \circ \psi _{D_\alpha  }  = \left\{ {\phi _C  \cdot \kappa  \cdot \sigma _\alpha   \circ \psi _{D_\alpha  } :\kappa  \in \mathfrak{S}_{\mu \left( C \right)} } \right\} \subseteq M.\]  Similarly, for $D = D_\alpha  \pi  \in O\left( {\nu ,D_\alpha  } \right)$, define \[ \phi _{C_\alpha  }  \circ \sigma _\alpha   \cdot \mathfrak{S}_{\nu \left( D \right)}  \cdot \psi _D  = \left\{ {\phi _{C_\alpha  }  \circ \sigma _\alpha   \cdot \gamma  \cdot \psi _D :\gamma  \in \mathfrak{S}_{\nu \left( D \right)} } \right\}.\]  Finally, for $C \in O\left( {\mu ,C_\alpha  } \right)\,,\,D \in O\left( {\nu ,D_\alpha  } \right)$, define \[\phi _C  \cdot \mathfrak{S}_{\mu \left( C \right)}  \cdot \sigma _\alpha   \cdot \mathfrak{S}_{\nu \left( D \right)}  \cdot \psi _D  = \left\{ {\phi _C  \cdot \kappa  \cdot \sigma _\alpha   \cdot \gamma  \cdot \psi _D :\kappa  \in \mathfrak{S}_{\mu \left( C \right)} ,\gamma  \in \mathfrak{S}_{\nu \left( D \right)} } \right\} .\]

\begin{proposition} \label{p10.1}
   For compositions $\mu ,\nu  \in \Lambda \left( {r,n} \right)$ and an element $\alpha  = \phi _{C_\alpha  }  \circ \sigma _\alpha   \circ \psi _{D_\alpha  }  \in M$ of index $i$,

\begin{enumerate} [\upshape(a)]
    \item  For $C_1 ,C_2 ,C \in O\left( {\mu ,C_\alpha  } \right)$, $D_1 ,D_2 ,D \in O\left( {\nu ,D_\alpha  } \right)$, \[\left( {\phi _{C_1 }  \cdot \mathfrak{S}_{\mu \left( C \right)}  \cdot \sigma _\alpha   \cdot \mathfrak{S}_{\nu \left( D \right)}  \cdot \psi _{D_1 } } \right) \cap \left( {\phi _{C_2 }  \cdot \mathfrak{S}_{\mu \left( C \right)}  \cdot \sigma _\alpha   \cdot \mathfrak{S}_{\nu \left( D \right)}  \cdot \psi _{D_1 } } \right) = \emptyset \] unless $C_1  = C_2 $ and $D_1  = D_2 $.  The double coset $\mathfrak{S}_\mu  \alpha \mathfrak{S}_\nu  $ is a disjoint union 
    \[\mathfrak{S}_\mu  \alpha \mathfrak{S}_\nu   = \bigcup\limits_{C \in O\left( {\mu ,C_\alpha  } \right),D \in O\left( {\nu ,D_\alpha  } \right)} {\phi _C  \cdot \mathfrak{S}_{\mu \left( C \right)}  \cdot \sigma _\alpha   \cdot \mathfrak{S}_{\nu \left( D \right)}  \cdot \psi _D }  .\]
     \item  For $C_1 ,C_2  \in O\left( {\mu ,C_\alpha  } \right)$, \[\left( {\phi _{C_1 }  \cdot \mathfrak{S}_{\mu \left( C \right)}  \cdot \sigma _\alpha   \circ \psi _{D_\alpha  } } \right) \cap \left( {\phi _{C_2 }  \cdot \mathfrak{S}_{\mu \left( C \right)}  \cdot \sigma _\alpha   \circ \psi _{D_\alpha  } } \right) = \emptyset \] unless $C_1  = C_2 $.  \[\mathfrak{S}_\mu  \alpha  = \bigcup\limits_{C \in O\left( {\mu ,C_\alpha  } \right)} {\phi _C  \cdot \mathfrak{S}_{\mu \left( C \right)}  \cdot \sigma _\alpha   \circ \psi _{D_\alpha  } }  ,\] a disjoint union.
      \item  For  $D_1 ,D_2  \in O\left( {\nu ,D_\alpha  } \right)$, \[\left( {\phi _{C_\alpha  }  \cdot \sigma _\alpha   \cdot \mathfrak{S}_{\nu \left( D \right)}  \cdot \psi _{D_1 } } \right) \cap \left( {\phi _{C_\alpha  }  \cdot \sigma _\alpha   \cdot \mathfrak{S}_{\nu \left( D \right)}  \cdot \psi _{D_2 } } \right) = \emptyset \] unless $D_1  = D_2 $.  \[\alpha \mathfrak{S}_\nu   = \bigcup\limits_{D \in O\left( {\nu ,D_\alpha  } \right)} {\phi _{C_\alpha  }  \cdot \sigma _\alpha   \cdot \mathfrak{S}_{\nu \left( D \right)}  \circ \psi _D } ,\] a disjoint union.
\end{enumerate}
\end{proposition}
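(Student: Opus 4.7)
The plan is to establish (b) and (c) as symmetric halves and then combine them for (a); the only tools needed beyond the unique factorization $\alpha = \phi_{C_\alpha} \circ \sigma_\alpha \circ \psi_{D_\alpha}$ of Lemma \ref{ll2.1} are Lemmas \ref{l10.1} and \ref{l10.2}, which push factors of $\mathfrak{S}_\mu$ past $\phi_{C_\alpha}$ on the left and factors of $\mathfrak{S}_\nu$ past $\psi_{D_\alpha}$ on the right.

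For (b), I would write an arbitrary element of $\mathfrak{S}_\mu \alpha$ as $\rho \phi_{C_\alpha} \sigma_\alpha \psi_{D_\alpha}$ with $\rho \in \mathfrak{S}_\mu$, and apply Lemma \ref{l10.1} to replace $\rho \phi_{C_\alpha}$ by $\phi_{\rho C_\alpha} \rho_{C_\alpha}$ with $\rho_{C_\alpha} \in \mathfrak{S}_{\mu(C_\alpha)} = \mathfrak{S}_{\mu(C)}$ for $C = \rho C_\alpha \in O(\mu, C_\alpha)$. This places the element in $\phi_C \cdot \mathfrak{S}_{\mu(C)} \cdot \sigma_\alpha \circ \psi_{D_\alpha}$. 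The reverse inclusion uses the converse of Lemma \ref{l10.1}: given $\kappa \in \mathfrak{S}_{\mu(C)}$, there is $\rho' \in \mathfrak{S}_\mu$ with $\rho' \phi_{C_\alpha} = \phi_C \kappa$, so $\phi_C \kappa \sigma_\alpha \psi_{D_\alpha} = \rho' \alpha$. Part (c) is the mirror image using Lemma \ref{l10.2} to push $\pi \in \mathfrak{S}_\nu$ past $\psi_{D_\alpha}$, with its converse supplying the reverse inclusion. Part (a) is then obtained by chaining both moves: given $\rho \alpha \pi$, apply Lemma \ref{l10.1} on the left and Lemma \ref{l10.2} on the right to obtain $\rho \alpha \pi = \phi_C \rho_{C_\alpha} \sigma_\alpha \pi_{D_\alpha} \psi_D$ with $C = \rho C_\alpha$, $D = D_\alpha \pi$, and reverse by chaining the corresponding converses.

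The main obstacle is disjointness, which I would handle uniformly via the uniqueness clause of Lemma \ref{ll2.1}. Fix $\beta = \phi_C \cdot \kappa \cdot \sigma_\alpha \cdot \gamma \cdot \psi_D$ with $\kappa \in \mathfrak{S}_{\mu(C)}$ and $\gamma \in \mathfrak{S}_{\nu(D)}$. Since $\kappa$, $\sigma_\alpha$, $\gamma$ are bijections of $\bar i$, the nonzero image of $\beta$ equals $\phi_C(\bar i) = C$, so $\beta$ has index $i$. Moreover, for each $c \in C$ the preimage $\beta^{-1}(c)$ equals $\psi_D^{-1}(j)$ for $j = (\gamma^{-1} \sigma_\alpha^{-1} \kappa^{-1} \phi_C^{-1})(c) \in \bar i$, which is one of the $i$ disjoint subsets constituting $D$; as $c$ ranges over $C$ these exhaust $D$. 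Hence in the canonical factorization $\beta = \phi_{C_\beta} \sigma_\beta \psi_{D_\beta}$ we have $C_\beta = C$ and $D_\beta = D$, so by Lemma \ref{ll2.1} the pair $(C, D)$ is determined by $\beta$ alone. This forces disjointness of distinct summands in (a); specializing by fixing $D = D_\alpha$ (respectively $C = C_\alpha$) gives the corresponding disjointness in (b) (respectively (c)).
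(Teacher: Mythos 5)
Your proposal is correct and follows essentially the same route as the paper: both rest on pushing $\mathfrak{S}_\mu$ past $\phi_{C_\alpha}$ via Lemma \ref{l10.1} and $\mathfrak{S}_\nu$ past $\psi_{D_\alpha}$ via Lemma \ref{l10.2} (together with their converses for the reverse inclusions), and on the uniqueness in Lemma \ref{ll2.1} for disjointness, which you simply spell out in more detail than the paper's one-line assertion that $C_1 = C_\beta = C_2$ and $D_1 = D_\beta = D_2$. The only organizational difference is that the paper proves (a) first and obtains (b) and (c) by specializing $\nu$ (resp.\ $\mu$) to the composition with all parts equal to $1$, whereas you prove (b) and (c) directly and chain them for (a); this is immaterial.
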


\begin{proof}
  For part (a), if \[\beta  \in (\phi _{C_1 }  \cdot \mathfrak{S}_{\mu \left( C \right)}  \cdot \sigma _\alpha   \cdot \mathfrak{S}_{\nu \left( D \right)}  \cdot \psi _{D_1 } ) \cap (\phi _{C_2 }  \cdot \mathfrak{S}_{\mu \left( C \right)}  \cdot \sigma _\alpha   \cdot \mathfrak{S}_{\nu \left( D \right)}  \cdot \psi _{D_1 } )\] then $C_1  = C_\beta   = C_2 $ and $D_1  = D_\beta   = D_2 $ , so \[(\phi _{C_1 }  \cdot \mathfrak{S}_{\mu \left( C \right)}  \cdot \sigma _\alpha   \cdot \mathfrak{S}_{\nu \left( D \right)}  \cdot \psi _{D_1 } ) \cap (\phi _{C_2 }  \cdot \mathfrak{S}_{\mu \left( C \right)}  \cdot \sigma _\alpha   \cdot \mathfrak{S}_{\nu \left( D \right)}  \cdot \psi _{D_1 } ) = \emptyset \] unless $C_1  = C_2 $ and $D_1  = D_2 $.  

If $\beta  \in \mathfrak{S}_\mu  \alpha \mathfrak{S}_\nu  $, then  $\beta  = \rho  \circ \phi _{C_\alpha  }  \circ \sigma _\alpha   \circ \psi _{D_\alpha  }  \circ \pi $ for some $\rho  \in \mathfrak{S}_\mu  \,,\,\pi  \in \mathfrak{S}_\nu  .$  By lemmas \ref{l10.1} and \ref{l10.2}, there exist $\rho _C  \in \mathfrak{S}_{\mu \left( C \right)} \,,\,\pi _D  \in \mathfrak{S}_{\nu \left( D \right)} $ such that  
\[\beta  = \phi _{\rho C_\alpha  }  \circ \rho _C  \circ \sigma _\alpha   \circ \pi _D  \circ \psi _{D_\alpha  \pi }  \in \phi _{\rho C_\alpha  }  \cdot \mathfrak{S}_{\mu \left( C \right)}  \circ \sigma _\alpha   \circ \mathfrak{S}_{\nu \left( D \right)}  \cdot \psi _{D_\alpha  \pi } \,\,.\]  So  $\mathfrak{S}_\mu  \alpha \mathfrak{S}_\nu   \subseteq \bigcup\limits_{C \in O\left( {\mu ,C_\alpha  } \right),D \in O\left( {\nu ,D_\alpha  } \right)} {\phi _C  \cdot \mathfrak{S}_{\mu \left( C \right)}  \cdot \sigma _\alpha   \cdot \mathfrak{S}_{\nu \left( D \right)}  \cdot \psi _D } $ .  On the other hand, if 
\[\beta  = \phi _{\rho C_\alpha  }  \circ \rho _C  \circ \sigma _\alpha   \circ \pi _D  \circ \psi _{D_\alpha  \pi }  \in \phi _{\rho C_\alpha  }  \cdot \mathfrak{S}_{\mu \left( C \right)}  \circ \sigma _\alpha   \circ \mathfrak{S}_{\nu \left( D \right)}  \cdot \psi _{D_\alpha  \pi } \, , \] then by lemmas \ref{l10.1} and \ref{l10.2} there exist $\rho ' \in \mathfrak{S}_\mu  \,,\,\pi ' \in \mathfrak{S}_\nu  $ such that   
\[\beta  = \rho ' \cdot \phi _{C_\alpha  }  \circ \sigma _\alpha   \circ \psi _{D_\alpha  }  \cdot \pi ' = \rho '\alpha \pi ' \in \mathfrak{S}_\mu  \alpha \mathfrak{S}_\nu  .\]  So $\bigcup\limits_{C \in O\left( {\mu ,C_\alpha  } \right),D \in O\left( {\nu ,D_\alpha  } \right)} {\phi _C  \cdot \mathfrak{S}_{\mu \left( C \right)}  \cdot \sigma _\alpha   \cdot \mathfrak{S}_{\nu \left( D \right)}  \cdot \psi _D }  \subseteq \mathfrak{S}_\mu  \alpha \mathfrak{S}_\nu  $ , completing the proof of part (a).
 
(b) follows from (a) by taking $D_1  = D_2  = D_\alpha  $ and $\nu $ to be the composition $\nu _i  = 1,\,\forall i$, so $\mathfrak{S}_\nu   = \mathfrak{S}_{\nu (D)}  = \left\{ 1 \right\}$.  Similarly, (c) follows from (a) by taking $C_1 = C_2  = C_\alpha  $ and $\mu $ to be the composition
$\mu _i~=~1,\,\forall i$, so $\mathfrak{S}_\mu   = \mathfrak{S}_{\mu (C)}  = \left\{ 1 \right\}$.  
\end{proof}

     Any double coset ${\mathbf{D}} = \mathfrak{S}_\mu  \alpha \mathfrak{S}_\nu   \in \,_\mu  M_\nu  $ has a well defined index $i$ since any $\beta  \in {\mathbf{D}}$ has the same index as $\alpha $.  We also have $O\left( {\mu ,C_\beta  } \right) = O\left( {\mu ,C_\alpha  } \right)$ for any $\beta  \in {\mathbf{D}}$, so ${\mathbf{D}}$ has a well defined orbit $O\left( {\mu ,{\mathbf{D}}} \right) = O\left( {\mu ,C_\alpha  } \right)$  for the action of $\mathfrak{S}_\mu  $ on $C\left( {i,r} \right)$.  There is also a well defined composition $\mu \left( {\mathbf{D}} \right)$ and a Young subgroup $\mathfrak{S}_{\mu \left( {\mathbf{D}} \right)} $ where $\mu \left( {\mathbf{D}} \right) = \mu \left( C \right)$ for any $C \in O\left( {\mu ,{\mathbf{D}}} \right)$.  Both $\mu \left( {\mathbf{D}} \right)$ and $\mathfrak{S}_{\mu \left( {\mathbf{D}} \right)} $ depend only on the orbit $O\left( {\mu ,{\mathbf{D}}} \right)$.  For a double coset ${\mathbf{D}} = \mathfrak{S}_\mu  \alpha \mathfrak{S}_\nu   \in \,_\mu  M_\nu  $, a left coset ${\mathbf{C}} \subseteq \mathfrak{S}_\mu  \alpha \mathfrak{S}_\nu  $ has the form ${\mathbf{C}} = \mathfrak{S}_\mu  \alpha  \cdot \pi  = \bigcup\limits_{C \in O\left( {\mu ,C_\alpha  } \right)} {\phi _C  \cdot \mathfrak{S}_{\mu \left( {C_\alpha  } \right)}  \cdot \sigma _\alpha   \circ \psi _{D_\alpha  } }  \cdot \pi $ for some $\pi  \in \mathfrak{S}_\nu  $, where the union is disjoint by  proposition \ref{p10.1}.   Then $n_L \left( {\mathbf{D}} \right) = \left| {\mathbf{C}} \right| = \left| {O\left( {\mu ,C_\alpha  } \right)} \right| \cdot \left| {\mathfrak{S}_{\mu \left( {C_\alpha  } \right)} } \right| = \left| {O\left( {\mu ,{\mathbf{D}}} \right)} \right| \cdot \left| {\mathfrak{S}_{\mu \left( {\mathbf{D}} \right)} } \right|$ , which depends only on the orbit $O_\mu   = O\left( {\mu ,{\mathbf{D}}} \right) = O\left( {\mu ,C_\alpha  } \right)$.  So we can define $n_L \left( {O_\mu  } \right) = n_L \left( {\mathbf{D}} \right)$ for any ${\mathbf{D}}$ with $O_\mu   = O\left( {\mu ,{\mathbf{D}}} \right)$.

      Similarly, ${\mathbf{D}}$ has a well defined orbit $O\left( {\nu ,{\mathbf{D}}} \right) = O\left( {\nu ,D_\alpha  } \right)$  for the action of $\mathfrak{S}_\nu  $ on $D\left( {M,i,r} \right)$ and there are a composition $\nu \left( {\mathbf{D}} \right)$ and a Young subgroup $\mathfrak{S}_{\nu \left( {\mathbf{D}} \right)} $ where $\nu \left( {\mathbf{D}} \right) = \nu \left( D \right)$ for any $D \in O\left( {\nu ,{\mathbf{D}}} \right)$.  Then $\nu \left( {\mathbf{D}} \right)$ , $\mathfrak{S}_{\nu \left( {\mathbf{D}} \right)} $ , and $n_R \left( {\mathbf{D}} \right) = \left| {O\left( {\nu ,{\mathbf{D}}} \right)} \right| \cdot \left| {\mathfrak{S}_{\nu \left( {\mathbf{D}} \right)} } \right|$ depend only on the orbit $O\left( {\nu ,{\mathbf{D}}} \right)$ and we can define $n_R \left( {O_\nu  } \right) = n_R \left( {\mathbf{D}} \right)$ for any ${\mathbf{D}}$ with $O_\nu   = O\left( {\nu ,{\mathbf{D}}} \right)$.

     Given an orbit pair $\left( {O_\mu  ,O_\nu  } \right) \in {\mathbf{O}}\left( {\mu ,\nu ,M,i} \right)$, define compositions $\mu \left( {O_\mu  } \right)$ and $\nu \left( {O_\nu  } \right)$ of $i$ and corresponding Young subgroups $\mathfrak{S}_{\mu \left( {O_\mu  } \right)} \,,\,\mathfrak{S}_{\nu \left( {O_\nu  } \right)} $, where $\mu \left( {O_\mu  } \right)\, = \mu \left( C \right)$ for any $C \in O_\mu  $ and  $\nu \left( {O_\nu  } \right)\, = \nu \left( D \right)$ for any $D \in O_\nu  $.  If ${}^{\mu \left( {O_\mu  } \right)}B^{\nu \left( {O_\nu  } \right)}  \subseteq \mathbb{Z}\left[ {\mathfrak{S}_i } \right]$ is the $\mathbb{Z}$-submodule of $B = \mathbb{Z}\left[ {\mathfrak{S}_i } \right]$ which is invariant under the action of $\mathfrak{S}_{\mu \left( {O_\mu  } \right)} $ on the left and $\mathfrak{S}_{\nu \left( {O_\nu  } \right)} $ on the right, then ${}^{\mu \left( {O_\mu  } \right)}B^{\nu \left( {O_\nu  } \right)} $ is a free $\mathbb{Z}$-module with basis $\left\{ {X\left( {\mathfrak{S}_{\mu \left( {O_\mu  } \right)} \sigma \,\mathfrak{S}_{\nu \left( {O_\nu  } \right)} } \right):\sigma  \in \mathfrak{S}_i } \right\}$.   Define a $\mathbb{Z}$- linear map $\Phi \left( {O_\mu  ,O_\nu  } \right):{}^{\mu \left( {O_\mu  } \right)}B^{\nu \left( {O_\nu  } \right)}  \to {}^{O_\mu  }A^{O_\nu  } $  by 
     \[\Phi \left( {O_\mu  ,O_\nu  } \right)\left( x \right) = \sum\limits_{C \in O_\mu  } {\sum\limits_{D \in O_\nu  } {\phi _C  \circ x \circ \psi _D } } .\]
       By proposition \ref{p10.1}, $\Phi \left( {O_\mu  ,O_\nu  } \right)$ is an isomorphism of free $\mathbb{Z}$-modules taking the basis elements $X\left( {\mathfrak{S}_{\mu \left( {O_\mu  } \right)} \sigma \,\mathfrak{S}_{\nu \left( {O_\nu  } \right)} } \right)$ for ${}^{\mu \left( {O_\mu  } \right)}B^{\nu \left( {O_\nu  } \right)} $ one to one onto the basis elements $X\left( {\mathfrak{S}_\mu  \alpha \mathfrak{S}_\nu  } \right)$ for ${}^{O_\mu  }A^{O_\nu  } $ where $\alpha  = \phi _C  \circ \sigma  \circ \psi _D $ for any $C \in O_\mu  ,D \in O_\nu  $.  Now as a $\mathbb{Z}$-module, ${}^{\mu \left( {O_\mu  } \right)}B^{\nu \left( {O_\nu  } \right)} $  can be identified with a direct summand of the standard Schur algebra $S_\mathbb{Z} \left( {r,n} \right)$.  The standard cellular basis for the Schur algebra $S_\mathbb{Z} \left( {r,n} \right)$ then yields a basis $\left\{ {\,_S C_T ^\lambda  } \right\}$  for ${}^{\mu \left( {O_\mu  } \right)}B^{\nu \left( {O_\nu  } \right)} $ where $\lambda $ is a partition of $i$, $S$ is a semistandard  $\lambda $ tableau of type $\mu \left( {O_\mu  } \right)$ and $T$ is a semistandard $\lambda $ tableau of type $\nu \left( {O_\nu  } \right)$.  Then $\left\{ {\Phi \left( {O_\mu  ,O_\nu  } \right)\left( {\,_S C_T ^\lambda  } \right)} \right\}$ gives a basis $B\left( {O_\mu  ,O_\nu  } \right)$ for ${}^{O_\mu  }A^{O_\nu  } $.   These piece together to give a basis for $\bar A$  which turns out to be a cell basis for $A_L ^\mathbb{Z} $ or $A_R ^\mathbb{Z} $.

     Let $\Lambda  = \bigcup\limits_{i \in I\left( M \right)} {\Lambda \left( i \right)} $ with the same partial order as for the cell algebra $\mathbb{Z}\left[ M \right]$ of section 7.  For $\lambda  \in \Lambda \left( i \right) \subset \Lambda $ define 
     \[L\left( \lambda  \right) = \left\{ {O_\mu  ,S:\mu  \in \Lambda \left( {r,n} \right)\,,\,O_\mu   \in {\mathbf{O}}\left( {\mu ,C\left( {i,r} \right)} \right),S \in SSt(\lambda ,\mu \left( {O_\mu  } \right)} \right\}\] where $SSt\left( {\lambda ,\mu \left( {O_\mu  } \right)} \right)$ is the set of semistandard $\lambda $
 tableaus of type $\mu \left( {O_\mu  } \right)$ and define \[R\left( \lambda  \right) = \left\{ {O_\nu  ,T:\nu  \in \Lambda \left( {r,n} \right)\,,\,O_\nu   \in {\mathbf{O}}\left( {\nu ,D\left( {M,i,r} \right)} \right),T \in SSt(\lambda ,\nu \left( {O_\nu  } \right)} \right\}\] where $SSt\left( {\lambda ,\nu \left( {O_\nu  } \right)} \right)$ is the set of semistandard $\lambda $ tableaus of type $\nu \left( {O_\nu  } \right)$.  Then for $\lambda  \in \Lambda \,,\,\left( {O_\mu  ,S} \right) \in L\left( \lambda  \right)\,,\,\left( {O_\nu  ,T} \right) \in R\left( \lambda  \right)$, define  ${}_{\left( {O_\mu  ,S} \right)}C_{\left( {O_\nu  ,T} \right)}^\lambda   = \Phi \left( {O_\mu  ,O_\nu  } \right)\left( {{}_SC_T^\lambda  } \right)$.    As just mentioned, elements of this type provide a $\mathbb{Z}$-basis for each direct summand ${}^{O_\mu  }A^{O_\nu  } $ and hence for all of  $\bar A$.   

     Note that if $M$ contains the zero map $z$ where $z\left( j \right) = 0$ for all $j$, then $\Lambda $ contains the "empty" partition $\lambda ^0 $ of index $0$.  For any partitions $\mu ,\nu $ the double coset $\mathfrak{S}_\mu  z\mathfrak{S}_\nu   = \left\{ z \right\} \subseteq {}^\mu A^\nu  $.  $C\left( {0,r} \right) = D\left( {M,0,r} \right) = \emptyset $, so for each partition there is one orbit $O_\mu  $ or $O_\nu  $.  Then $L\left( {\lambda ^0 } \right)$ contains one element $\left( {O_\mu  ,\emptyset } \right)$ for each partition $\mu $, and similarly for $R\left( {\lambda ^0 } \right)$.  Then for each pair of partitions $\mu ,\nu $ our basis contains an element $_{\left( {O_\mu  ,\emptyset } \right)} C^{\lambda ^0 } _{\left( {O_\nu  ,\emptyset } \right)}  = z \in {}^\mu A^\nu  $.

     We now show that the basis just described is a cell basis.  We first check that these basis elements have the left and right cell algebra properties (i) and (ii) for the ``ordinary'' product in $\bar A$.  We then check that the properties also hold for the products $ * _L {\text{ and }} * _R $ in $A_L ^\mathbb{Z} $ and $A_R ^\mathbb{Z} $.   

     Notice that each basis element ${}_{\left( {O_\mu  ,S} \right)}C_{\left( {O_\nu  ,T} \right)}^\lambda  $ for $\bar A$ is a sum of basis elements in the cell algebra $A = \mathbb{Z}\left[ M \right]$ of section 7 of the form $_{C,s} C_{t,D}^\lambda  $ for the same $\lambda $
 (where $s,t$ are standard tableaux of type $S,T$ , $C \in O_\mu  \,,\,D \in O_\nu  $).  As in section 7, let $A^\lambda  \,,\,\hat A^\lambda  $ be the ideals in the cell algebra $A = \mathbb{Z}\left[ M \right]$ and let $\bar A^\lambda  \,,\,\hat \bar A^\lambda  $ be the corresponding submodules of $\bar A$ (spanned by basis elements ${}_{\left( {O_\mu  ,S} \right)}C_{\left( {O_\nu  ,T} \right)}^\kappa  $  with $\kappa  \geqslant \lambda $ or $\kappa  > \lambda $ respectively).  Then $\hat A^\lambda   \cap {}^\mu A^\nu   = \hat \bar A^\lambda   \cap {}^\mu A^\nu  $ for any $\lambda ,\mu ,\nu $.   

\begin{lemma} \label{l10.3}
    Let ${}_{\left( {O_{\mu _i } ,S_i } \right)}C_{\left( {O_{\nu _i } ,T_i } \right)}^{\lambda _i }  \in {}^{O_{\mu _i } }A^{O_{\nu _i } } $ for  $i = 1,2$.  Then in $\bar A$,   ${}_{\left( {O_{\mu _1 } ,S_1 } \right)}C_{\left( {O_{\nu _1 } ,T_1 } \right)}^{\lambda _1 }  \cdot {}_{\left( {O_{\mu _2 } ,S_2 } \right)}C_{\left( {O_{\nu _2 } ,T_2 } \right)}^{\lambda _2 }  = \sum\limits_{\mu ',S'} {r \cdot \,\,{}_{\left( {O_{\mu '} ,S'} \right)}C_{\left( {O_{\nu _2 } ,T_2 } \right)}^{\lambda _2 } \,\,} \,\bmod \,\hat \bar A^{\lambda _2 } $ where the coefficients $r \in \mathbb{Z}$ are independent of  $O_{\nu _2 } {\text{ and }}T_2 $.
\end{lemma}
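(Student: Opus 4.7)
The plan is to reduce the statement to property (i) for the cell algebra $A = \mathbb{Z}[M]$ of Proposition~\ref{pp2.1} and then to reassemble the output into the $\bar A$-basis using the built-in invariance of $\bar A$ under left multiplication by $\mathfrak{S}_{\mu_1}$ and right multiplication by $\mathfrak{S}_{\nu_2}$.

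If $\nu_1 \ne \mu_2$ the product vanishes in $\bar A$ by definition and the statement is trivial, so assume $\nu_1 = \mu_2$. Under this assumption the $\bar A$-product agrees with the ordinary product in $A = \mathbb{Z}[M]$ and the result lies in ${}^{\mu_1}A^{\nu_2}$. Each $\bar A$-basis element ${}_{(O_\mu,S)}C_{(O_\nu,T)}^\lambda = \Phi(O_\mu,O_\nu)({}_SC_T^\lambda)$ expands, via the standard relationship between the semistandard Schur algebra cellular basis and the Murphy basis of $\mathbb{Z}[\mathfrak{S}_i]$, as a $\mathbb{Z}$-linear combination of the section~3 cell basis elements ${}_{(C,s)}C_{(D,t)}^\lambda$ with $C\in O_\mu$, $D\in O_\nu$, and $s,t$ standard tableaux compatible with the semistandard types $S,T$. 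Expand both factors in this way and apply property (i) from Proposition~\ref{pp2.1} to each product $a\cdot {}_{(C_2,s_2)}C_{(D_2,t_2)}^{\lambda_2}$. The crucial feature is that the resulting coefficient $r_L(a,\lambda_2,(C_2,s_2),(C',s'))$ is independent of the right index $(D_2,t_2)$.

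After this reduction, $a\cdot x \pmod{\hat A^{\lambda_2}}$ is a $\mathbb{Z}$-linear combination of Murphy-type basis elements ${}_{(C',s')}C_{(D_2,t_2)}^{\lambda_2}$ whose $(D_2,t_2)$-dependence enters only through the original expansion coefficients of the right factor ${}_{(O_{\mu_2},S_2)}C_{(O_{\nu_2},T_2)}^{\lambda_2}$. Since $a\in{}^{O_{\mu_1}}A^{O_{\nu_1}}$ is $\mathfrak{S}_{\mu_1}$-invariant on the left and the right factor is $\mathfrak{S}_{\nu_2}$-invariant on the right, the product $a\cdot x$ is simultaneously $\mathfrak{S}_{\mu_1}$-left and $\mathfrak{S}_{\nu_2}$-right invariant. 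The $\mathfrak{S}_{\mu_1}$-invariance forces the left-index data $(C',s')$ to organize into orbit/semistandard pairs $(O_{\mu'},S')$, while the $\mathfrak{S}_{\nu_2}$-invariance reassembles the right-index data back into the pair $(O_{\nu_2},T_2)$. Collecting yields $a\cdot x \equiv \sum_{(O_{\mu'},S')} r\cdot {}_{(O_{\mu'},S')}C_{(O_{\nu_2},T_2)}^{\lambda_2} \pmod{\hat A^{\lambda_2}}$, and the independence of $r$ from $(O_{\nu_2},T_2)$ is inherited from the corresponding independence of $r_L$ from $(D_2,t_2)$.

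Finally, this congruence modulo $\hat A^{\lambda_2}$ lifts to a congruence modulo $\hat{\bar A}^{\lambda_2}$ inside ${}^{\mu_1}A^{\nu_2}$ via the equality $\hat A^{\lambda_2}\cap {}^{\mu_1}A^{\nu_2} = \hat{\bar A}^{\lambda_2}\cap {}^{\mu_1}A^{\nu_2}$ recorded just before the lemma. I expect the main obstacle to be the tableau bookkeeping in the regrouping step: one must verify that the Schur semistandard-to-Murphy expansion coefficients combine with the $r_L$ coefficients in exactly the right way to produce integer multiples of the $\bar A$-basis elements. The invariance argument above dictates the form the answer must take, but a careful accounting is still required to extract the explicit coefficients $r$ and to confirm integrality at each stage.
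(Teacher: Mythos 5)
Your proposal is correct and follows essentially the same route as the paper's proof: expand the right factor into the section~3 cell basis elements ${}_{(C,s)}C^{\lambda _2}_{(D,t)}$ with $D \in O_{\nu _2}$ and $t$ of type $T_2$, apply property (i) of the cell algebra $\mathbb{Z}[M]$ to get coefficients independent of $(D,t)$, use membership of the product in ${}^{\mu _1}A^{\nu _2}$ to regroup into the $\bar A$-basis, and pass from $\hat A^{\lambda _2}$ to $\hat{\bar A}^{\lambda _2}$ via the intersection equality stated before the lemma. The bookkeeping you flag at the end is resolved in the paper by precisely the invariance observation you already make (the product lies in ${}^{\mu _1}A^{\nu _2}$, whose basis consists of the elements ${}_{(O_{\mu '},S')}C^{\kappa}_{(O_{\nu '},T')}$), so no further work is needed beyond what you describe.
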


\begin{proof}
  Write ${}_{\left( {O_{\mu _2 } ,S_2 } \right)}C_{\left( {O_{\nu _2 } ,T_2 } \right)}^{\lambda _2 } $ as a sum of terms $_{C,s} C_{t,D}^{\lambda _2 } $  where $t$ is a standard tableau of type $T$ and $D \in O_{\nu _2 } $.  Then using property (i) for the cell algebra $A$, we have ${}_{\left( {O_{\mu _1 } ,S_1 } \right)}C_{\left( {O_{\nu _1 } ,T_1 } \right)}^{\lambda _1 }  \cdot {}_{\left( {O_{\mu _2 } ,S_2 } \right)}C_{\left( {O_{\nu _2 } ,T_2 } \right)}^{\lambda _2 }  = \sum\limits_{C',s'} {r \cdot \,\,{}_{C',s'}C_{t,D}^{\lambda _2 } } \,\,\,\bmod \,\hat A^{\lambda _2 } $ where the coefficients $r$ are independent of $D$ and $t$ and therefore of $O_{\nu _2 } {\text{ and }}T_2 $.   Also  ${}_{\left( {O_{\mu _1 } ,S_1 } \right)}C_{\left( {O_{\nu _1 } ,T_1 } \right)}^{\lambda _1 }  \cdot {}_{\left( {O_{\mu _2 } ,S_2 } \right)}C_{\left( {O_{\nu _2 } ,T_2 } \right)}^{\lambda _2 }  \in \,^{\mu _1 } A^{\nu _2 } $.  So the terms $_{C',s'} C_{t,D}^{\lambda _2 } $ must regroup into a linear combination of terms of the form ${}_{\left( {O_{\mu '} ,S'} \right)}C_{\left( {O_{\nu _2 } ,T_2 } \right)}^{\lambda _2 } $ .  Then using $\hat A^{\lambda _2 }  \cap {}^{\mu _1 }A^{\nu _2 }  = \hat \bar A^{\lambda _2 }  \cap {}^{\mu _1 }A^{\nu _2 } $
  gives the result. 
\end{proof}

Since the ${}_{\left( {O_\mu  ,S} \right)}C_{\left( {O_\nu  ,T} \right)}^\lambda  $ form a basis for $\bar A$ , linearity gives the following corollary.

\begin{corollary} \label{c10.1}
  For any $x \in \bar A$, \[x \cdot {}_{\left( {O_\mu  ,S} \right)}C_{\left( {O_\nu  ,T} \right)}^\lambda   = \sum\limits_{\mu ',S'} {r \cdot \,\,{}_{\left( {O_{\mu '} ,S'} \right)}C_{\left( {O_\nu  ,T_{} } \right)}^\lambda  } \,\,\,\bmod \,\hat \bar A^\lambda \] where $r = r\left( {x,\mu ,S,\mu ',S'} \right)$ is independent of $O_{\nu} ,T$.
\end{corollary}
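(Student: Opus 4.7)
The plan is essentially a straightforward linearity argument once we have Lemma~\ref{l10.3} in hand, so the real work is bookkeeping. First I would expand $x$ in the cell basis for $\bar A$:
\[ x \;=\; \sum_{\mu_1,\nu_1,\lambda_1,S_1,T_1,O_{\mu_1},O_{\nu_1}} c\bigl(\mu_1,\nu_1,\lambda_1,S_1,T_1,O_{\mu_1},O_{\nu_1}\bigr)\cdot {}_{(O_{\mu_1},S_1)}C_{(O_{\nu_1},T_1)}^{\lambda_1}, \]
with coefficients $c(\cdots)\in\mathbb{Z}$. Multiplying both sides on the right by ${}_{(O_\mu,S)}C_{(O_\nu,T)}^\lambda$ and using bilinearity of the product in $\bar A$ turns the computation into a sum of two--basis--element products, which is precisely the situation handled by Lemma~\ref{l10.3}.

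Next I would apply the lemma with $\lambda_2=\lambda$, $\mu_2=\mu$, $\nu_2=\nu$, $S_2=S$, $T_2=T$, $O_{\mu_2}=O_\mu$, $O_{\nu_2}=O_\nu$ to each basis piece of $x$. The lemma supplies, for each fixed $(\mu_1,\nu_1,\lambda_1,S_1,T_1,O_{\mu_1},O_{\nu_1})$, integer coefficients
\[ \tilde r\bigl(\mu_1,\nu_1,\lambda_1,S_1,T_1,O_{\mu_1},O_{\nu_1};\mu,S;\mu',S'\bigr) \]
such that
\[ {}_{(O_{\mu_1},S_1)}C_{(O_{\nu_1},T_1)}^{\lambda_1} \cdot {}_{(O_\mu,S)}C_{(O_\nu,T)}^\lambda \;\equiv\; \sum_{\mu',S'} \tilde r \cdot {}_{(O_{\mu'},S')}C_{(O_\nu,T)}^\lambda \pmod{\hat{\bar A}^\lambda}, \]
and these $\tilde r$ are independent of $(O_\nu,T)$. (Those basis pieces of $x$ whose internal composition $\nu_1$ disagrees with $\mu$ contribute zero to the product in $\bar A$ by the very definition of the multiplication on $\bar A$, so they cause no trouble.)

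Finally, I would set
\[ r\bigl(x,\mu,S,\mu',S'\bigr) \;=\; \sum_{\mu_1,\nu_1,\lambda_1,S_1,T_1,O_{\mu_1},O_{\nu_1}} c(\cdots)\cdot \tilde r(\cdots;\mu,S;\mu',S') \]
and collect terms; since each $\tilde r$ is independent of $(O_\nu,T)$, so is their $\mathbb{Z}$-linear combination $r$. The congruence modulo $\hat{\bar A}^\lambda$ is preserved since $\hat{\bar A}^\lambda$ is an $R$-submodule. The only point requiring any care is the one flagged above: that in the expansion of $x$ the terms with $\nu_1\neq\mu$ drop out, so that Lemma~\ref{l10.3} may legitimately be applied to each surviving summand. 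This is not really an obstacle, but it is the only place where one has to invoke something beyond raw linearity, and I would state it explicitly before collecting coefficients.
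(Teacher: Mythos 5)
Your proposal is correct and is essentially the paper's own argument: the paper derives Corollary~\ref{c10.1} from Lemma~\ref{l10.3} precisely by expanding $x$ in the basis $\left\{ {}_{\left( {O_\mu ,S} \right)}C_{\left( {O_\nu ,T} \right)}^\lambda \right\}$ and invoking linearity, exactly as you do. Your extra remark that basis terms of $x$ whose right composition differs from $\mu$ contribute zero (by the definition of the product on $\bar A$) is a harmless spelling-out of a point the paper leaves implicit.
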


     Similar arguments give the following results.   

\begin{lemma} \label{l10.4}
    Let ${}_{\left( {O_{\mu _i } ,S_i } \right)}C_{\left( {O_{\nu _i } ,T_i } \right)}^{\lambda _i }  \in {}^{O_{\mu _i } }A^{O_{\nu _i } } $
 for  $i = 1,2$.  Then in $\bar A$,   ${}_{\left( {O_{\mu _1 } ,S_1 } \right)}C_{\left( {O_{\nu _1 } ,T_1 } \right)}^{\lambda _1 }  \cdot {}_{\left( {O_{\mu _2 } ,S_2 } \right)}C_{\left( {O_{\nu _2 } ,T_2 } \right)}^{\lambda _2 }  = \sum\limits_{\nu ',T'} {r \cdot \,\,{}_{\left( {O_{\mu _1 } ,S_1 } \right)}C_{\left( {O_{\nu '} ,T'} \right)}^{\lambda _1 } } \,\,\,\bmod \,\hat \bar A^{\lambda _1 } $ where the coefficients $r \in \mathbb{Z}$ are independent of  $O_{\mu _1 } {\text{ and }}S_1 $.
\end{lemma}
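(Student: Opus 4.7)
The plan is to run the argument of Lemma~\ref{l10.3} with left and right swapped, using property (ii) of Definition~\ref{d2.1} in place of property (i). First I would expand the left factor ${}_{(O_{\mu_1},S_1)}C^{\lambda_1}_{(O_{\nu_1},T_1)}$ as a $\mathbb{Z}$-linear combination of cell basis elements $_{C,s}C^{\lambda_1}_{t,D}$ of the cell algebra $A=\mathbb{Z}[M]$ of section~3, with $C\in O_{\mu_1}$ and $s$ a standard tableau of type $S_1$ (together with appropriate $t,D$ on the right).

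Setting $a:={}_{(O_{\mu_2},S_2)}C^{\lambda_2}_{(O_{\nu_2},T_2)}\in A$, property (ii) applied to the cell algebra $A$ gives $_{C,s}C^{\lambda_1}_{t,D}\cdot a = \sum_{t',D'} r\cdot {}_{C,s}C^{\lambda_1}_{t',D'}\,\bmod\hat A^{\lambda_1}$, where the coefficients $r=r_R(a,\lambda_1,(t,D),(t',D'))$ are independent of the left indices $(C,s)$. Summing with the coefficients from the first step, and using that the whole product lies in ${}^{\mu_1}A^{\nu_2}$, the terms $_{C,s}C^{\lambda_1}_{t',D'}$ must regroup into a $\mathbb{Z}$-linear combination of $\bar A$-basis elements of the form ${}_{(O_{\mu_1},S_1)}C^{\lambda_1}_{(O_{\nu'},T')}$. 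Since the $r$'s do not depend on $(C,s)$, the coefficients of this regrouping do not depend on $O_{\mu_1}$ or $S_1$, which is exactly the independence asserted by the lemma.

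The last step is to upgrade the congruence modulo $\hat A^{\lambda_1}$ in $A$ to one modulo $\hat{\bar A}^{\lambda_1}$ in $\bar A$; this is immediate from the identity $\hat A^{\lambda_1}\cap {}^{\mu_1}A^{\nu_2} = \hat{\bar A}^{\lambda_1}\cap {}^{\mu_1}A^{\nu_2}$ recorded just before Lemma~\ref{l10.3}. The main obstacle I anticipate is the regrouping argument: one has to check that the $_{C,s}C^{\lambda_1}_{t',D'}$ that appear, once the first-step coefficients are applied, really do assemble into honest $\bar A$-basis elements sharing the common left label $(O_{\mu_1},S_1)$. This is forced by the $\mathfrak{S}_{\mu_1}$-invariance on the left and $\mathfrak{S}_{\nu_2}$-invariance on the right of the product, combined with the uniqueness of the $\bar A$-basis expansion inside ${}^{\mu_1}A^{\nu_2}$; the situation is completely parallel to Lemma~\ref{l10.3}, and no new ingredient beyond swapping the role of properties (i) and (ii) should be needed.
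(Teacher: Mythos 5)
Your proposal is correct and is essentially the paper's intended argument: the paper proves Lemma \ref{l10.4} only by the remark ``Similar arguments give the following results,'' meaning the proof of Lemma \ref{l10.3} with the roles of left and right swapped, property (ii) of Definition \ref{d2.1} in place of property (i), and the same identity $\hat A^{\lambda}\cap {}^{\mu}A^{\nu} = \hat{\bar A}^{\lambda}\cap {}^{\mu}A^{\nu}$ to pass from $\hat A^{\lambda_1}$ to $\hat{\bar A}^{\lambda_1}$. Your treatment of the regrouping step (invariance of the product plus uniqueness of the basis expansion in ${}^{\mu_1}A^{\nu_2}$) is exactly the justification the paper uses in Lemma \ref{l10.3}.
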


\begin{corollary} \label{c10.2}
  For any $x \in \bar A$, \[{}_{\left( {O_\mu  ,S} \right)}C_{\left( {O_\nu  ,T} \right)}^\lambda   \cdot x = \sum\limits_{\nu ',T'} {r \cdot \,\,{}_{\left( {O_\mu  ,S} \right)}C_{\left( {O_{\nu '} ,T'} \right)}^\lambda  } \,\,\,\bmod \,\hat \bar A^\lambda \] where $r = r\left( {x,\nu ,T,\nu ',T'} \right)$ is independent of $O_{\mu} ,S$.
\end{corollary}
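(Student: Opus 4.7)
The plan is to deduce Corollary \ref{c10.2} from Lemma \ref{l10.4} by $\mathbb{Z}$-linear extension in $x$, in exact parallel to how Corollary \ref{c10.1} was obtained from Lemma \ref{l10.3}.

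First I would expand an arbitrary $x \in \bar A$ in the cell basis as a finite $\mathbb{Z}$-linear combination
\[
x = \sum_i c_i \cdot {}_{(O_{\mu_i}, S_i)}C^{\lambda_i}_{(O_{\nu_i}, T_i)}, \qquad c_i \in \mathbb{Z}.
\]
Bilinearity of the product on $\bar A$ then gives
\[
{}_{(O_\mu, S)}C^\lambda_{(O_\nu, T)} \cdot x = \sum_i c_i \cdot \bigl( {}_{(O_\mu, S)}C^\lambda_{(O_\nu, T)} \cdot {}_{(O_{\mu_i}, S_i)}C^{\lambda_i}_{(O_{\nu_i}, T_i)} \bigr).
\]

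Next I would apply Lemma \ref{l10.4} to each summand, taking the left factor to have superscript $\lambda_1 = \lambda$ and the right factor superscript $\lambda_2 = \lambda_i$. The lemma delivers, modulo $\hat{\bar A}^{\lambda}$,
\[
{}_{(O_\mu, S)}C^\lambda_{(O_\nu, T)} \cdot {}_{(O_{\mu_i}, S_i)}C^{\lambda_i}_{(O_{\nu_i}, T_i)} \equiv \sum_{\nu'_i, T'_i} r_i \cdot {}_{(O_\mu, S)}C^\lambda_{(O_{\nu'_i}, T'_i)},
\]
where each $r_i$ depends on $\lambda, \nu, T$ and on the indices $\lambda_i, \mu_i, S_i, \nu_i, T_i, \nu'_i, T'_i$ but, crucially, is independent of $O_\mu$ and $S$. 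Substituting back and then grouping contributions with the same output pair $(\nu', T')$ produces the desired expansion
\[
{}_{(O_\mu, S)}C^\lambda_{(O_\nu, T)} \cdot x \equiv \sum_{\nu', T'} r \cdot {}_{(O_\mu, S)}C^\lambda_{(O_{\nu'}, T')} \pmod{\hat{\bar A}^\lambda}.
\]

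The only step that needs a moment's scrutiny is the claim that the aggregated coefficient has the form $r = r(x, \nu, T, \nu', T')$, i.e.\ is independent of $O_\mu$ and $S$. But this is automatic: each individual $r_i$ is already independent of $(O_\mu, S)$ and the $c_i$ are scalars, so any $\mathbb{Z}$-linear combination of such $r_i$ retains this independence. There is no genuine obstacle in the argument; it is simply the right-module mirror of the deduction of Corollary \ref{c10.1} from Lemma \ref{l10.3}, and the only substantive observation is that independence from $(O_\mu, S)$ is preserved under $\mathbb{Z}$-linear aggregation.
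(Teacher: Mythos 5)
Your proposal is correct and matches the paper's own (very terse) justification: the paper derives Corollary \ref{c10.2} from Lemma \ref{l10.4} by exactly this linearity argument, mirroring the deduction of Corollary \ref{c10.1} from Lemma \ref{l10.3}. Your added observation that independence from $(O_\mu, S)$ survives $\mathbb{Z}$-linear aggregation is the right point to make explicit.
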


We now transfer our results to $A_L ^\mathbb{Z} $ and $A_R ^\mathbb{Z} $.  We need the following lemma.

\begin{lemma} \label{l10.5}
  Let $B =  \cup B\left( {O_\mu  ,O_\nu  } \right)$ be a basis for $\bar A$ where each $B\left( {O_\mu  ,O_\nu  } \right)$ is a basis for the direct summand ${}^{O_\mu  }A^{O_\nu  } $.  For  $b \in B\left( {O_\mu  ,O_\nu  } \right)$ define $n_L \left( b \right) = n_L \left( {O_\mu  } \right)$ and $n_R \left( b \right) = n_R \left( {O_\nu  } \right)$.  Assume that $b_1 b_2  = \sum\limits_{b \in B} {c(b_1 ,b_2 ,b)\,b} $  with structure constants $c\left( {b_1 ,b_2 ,b} \right) \in \mathbb{Z}$ (using the ``ordinary'' product in $\bar A$).  Then 
 $b_1  * _L b_2  = \sum\limits_{b \in B} {\frac{{n_L \left( b \right)}}
{{n_L \left( {b_1 } \right)n_L (b_2 )}}\,\,\,c(b_1 ,b_2 ,b)\,\,b} $  and $b_1  * _R b_2  = \sum\limits_{b \in B} {\frac{{n_R \left( b \right)}}
{{n_R \left( {b_1 } \right)n_R (b_2 )}}\,\,\,c(b_1 ,b_2 ,b)\,\,b} $.
\end{lemma}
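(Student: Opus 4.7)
The plan is to reduce the question from the basis $B = \cup B(O_\mu, O_\nu)$ to the ``raw'' basis $\{X(\mathbf{D})\}$, where the products $\,\cdot\,$, $*_L$, and $*_R$ are defined in terms of the same structure constants $a(\mathbf{D}_1,\mathbf{D}_2,\mathbf{D})$. First I would dispose of the trivial case: if the composition pair for $b_1$ is $(\mu_1,\nu_1)$ and for $b_2$ is $(\mu_2,\nu_2)$ with $\nu_1 \ne \mu_2$, then $b_1 b_2 = b_1 *_L b_2 = b_1 *_R b_2 = 0$ by definition of all three products on $\bar A$, so the formula holds vacuously. So assume $\nu_1=\mu_2$.

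Next I would expand $b_1 = \sum_{\mathbf{D}_1} x_1(\mathbf{D}_1)\,X(\mathbf{D}_1)$ with $\mathbf{D}_1 \in M(O_{\mu_1},O_{\nu_1})$, and similarly $b_2 = \sum_{\mathbf{D}_2} x_2(\mathbf{D}_2)\,X(\mathbf{D}_2)$ with $\mathbf{D}_2 \in M(O_{\mu_2},O_{\nu_2})$. The key observation (already established in the paragraph preceding the lemma) is that $n_L(\mathbf{D})$ depends only on the orbit $O(\mu,\mathbf{D})$, so $n_L(\mathbf{D}_1)=n_L(O_{\mu_1})=n_L(b_1)$ and $n_L(\mathbf{D}_2)=n_L(b_2)$ are constants that can be pulled out of any double sum. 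Using the definition of $*_L$ on the $X$-basis, I would then write
\[
b_1 *_L b_2 \;=\; \frac{1}{n_L(b_1)\,n_L(b_2)}\sum_{\mathbf{D}}\,n_L(\mathbf{D})\!\left(\sum_{\mathbf{D}_1,\mathbf{D}_2}x_1(\mathbf{D}_1)x_2(\mathbf{D}_2)\,a(\mathbf{D}_1,\mathbf{D}_2,\mathbf{D})\right)X(\mathbf{D}).
\]

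The third step is to regroup the outer sum over $\mathbf{D}$ by orbit pair. Each $X(\mathbf{D})$ lies in a unique summand ${}^{O_\mu}A^{O_\nu}$, and on that summand $n_L(\mathbf{D})=n_L(O_\mu)=n_L(b)$ is the same for every basis element $b\in B(O_\mu,O_\nu)$. On the other hand, the inner parenthesis is precisely the coefficient of $X(\mathbf{D})$ in the ordinary product $b_1 b_2=\sum_b c(b_1,b_2,b)\,b$, so collecting $X(\mathbf{D})$'s lying in $B(O_\mu,O_\nu)$ reproduces the expansion $\sum_{b\in B(O_\mu,O_\nu)} c(b_1,b_2,b)\,b$ with the common scalar $n_L(b)/(n_L(b_1)n_L(b_2))$ pulled outside. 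Summing over orbit pairs delivers the claimed formula for $*_L$, and the $*_R$ case is identical with $n_R$ in place of $n_L$.

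The only real bookkeeping obstacle is making sure that $n_L(\mathbf{D})$ really is constant on the support of each basis element $b \in B(O_\mu,O_\nu)$ and that the regrouping by orbit pair commutes with the ordinary-product expansion $b_1 b_2=\sum c(b_1,b_2,b)\,b$; both points follow immediately because the decomposition $\bar A=\bigoplus{}^{O_\mu}A^{O_\nu}$ is $\mathbb{Z}$-module direct and $n_L,n_R$ are orbit invariants by construction. Everything else is linearity plus factoring out the constants $n_L(b_1), n_L(b_2)$ (respectively $n_R(b_1),n_R(b_2)$).
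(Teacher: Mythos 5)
Your proposal is correct and is essentially the paper's own argument, just written out in full: the paper likewise notes that the formula holds by definition on the standard basis $\left\{ X\left( {\mathbf{D}} \right) \right\}$, that each new basis element lies in a single summand ${}^{O_\mu  }A^{O_\nu  }$, and that $n_L ,n_R$ depend only on the orbits, so the result transfers by linearity. Your explicit expansion, factoring out of $n_L \left( {b_1 } \right),n_L \left( {b_2 } \right)$, and regrouping by orbit pair is just the detailed form of that same linearity argument (with the $\nu _1  \ne \mu _2$ case handled explicitly), so there is nothing to add.
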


\begin{proof}
  The result is true by definition for the standard basis $\left\{ {b_D  = X\left( D \right)} \right\}$.  Each $b \in B\left( {O_\mu  ,O_\nu  } \right)$ is a linear combination of standard basis vectors $b_D  \in B\left( {O_\mu  ,O_\nu  } \right)$ and each $b_D  \in {}^{O_\mu  }A^{O_\nu  } $ is a linear combination of the new $b \in B\left( {O_\mu  ,O_\nu  } \right)$.  Then since the values $n_L \left( b \right)\,,\,n_R \left( b \right)$ depend only on the orbits $O_\mu  \,,\,O_\nu  $ , the result for the new basis follows by linearity. 
\end{proof}

\begin{lemma} \label{l10.6}
   Let ${}_{\left( {O_{\mu _i } ,S_i } \right)}C_{\left( {O_{\nu _i } ,T_i } \right)}^{\lambda _i }  \in {}^{O_{\mu _i } }A^{O_{\nu _i } } $ for  $i = 1,2$.  Assume $\nu _1  = \mu _2 $.

\begin{enumerate} [\upshape(a)]
    \item  In $A_L ^\mathbb{Z} $,
    \[{}_{\left( {O_{\mu _1 } ,S_1 } \right)}C_{\left( {O_{\nu _1 } ,T_1 } \right)}^{\lambda _1 }  * _L {}_{\left( {O_{\mu _2 } ,S_2 } \right)}C_{\left( {O_{\nu _2 } ,T_2 } \right)}^{\lambda _2 }  = \sum\limits_{\mu ',S'} {r \cdot \,\,{}_{\left( {O_{\mu '} ,S'} \right)}C_{\left( {O_{\nu _2 } ,T_2 } \right)}^{\lambda _2 } } \,\,\,\bmod \,\hat \bar A^{\lambda _2 } \] where the coefficients $r \in \mathbb{Z}$ are independent of  $O_{\nu _2 } {\text{ and }}T_2 .$
      \item  In $A_L ^\mathbb{Z} $ ,
      \[{}_{\left( {O_{\mu _1 } ,S_1 } \right)}C_{\left( {O_{\nu _1 } ,T_1 } \right)}^{\lambda _1 }  * _L {}_{\left( {O_{\mu _2 } ,S_2 } \right)}C_{\left( {O_{\nu _2 } ,T_2 } \right)}^{\lambda _2 }  = \sum\limits_{\nu ',T'} {r \cdot \,\,{}_{\left( {O_{\mu _1 } ,S_1 } \right)}C_{\left( {O_{\nu '} ,T'} \right)}^{\lambda _1 } \,\,} \,\bmod \,\hat \bar A^{\lambda _1 } \] where the coefficients $r \in \mathbb{Z}$ are independent of  $O_{\mu _1 } {\text{ and }}S_1 .$
      \item  In $A_R ^\mathbb{Z} $,  
      \[{}_{\left( {O_{\mu _1 } ,S_1 } \right)}C_{\left( {O_{\nu _1 } ,T_1 } \right)}^{\lambda _1 }  * _R {}_{\left( {O_{\mu _2 } ,S_2 } \right)}C_{\left( {O_{\nu _2 } ,T_2 } \right)}^{\lambda _2 }  = \sum\limits_{\mu ',S'} {r \cdot \,\,{}_{\left( {O_{\mu '} ,S'} \right)}C_{\left( {O_{\nu _2 } ,T_2 } \right)}^{\lambda _2 } \,\,} \,\bmod \,\hat \bar A^{\lambda _2 } \] where the coefficients $r \in \mathbb{Z}$ are independent of  $O_{\nu _2 } {\text{ and }}T_2 .$
      \item  In $A_R ^\mathbb{Z} $ , 
      \[{}_{\left( {O_{\mu _1 } ,S_1 } \right)}C_{\left( {O_{\nu _1 } ,T_1 } \right)}^{\lambda _1 }  * _R {}_{\left( {O_{\mu _2 } ,S_2 } \right)}C_{\left( {O_{\nu _2 } ,T_2 } \right)}^{\lambda _2 }  = \sum\limits_{\nu ',T'} {r \cdot \,\,{}_{\left( {O_{\mu _1 } ,S_1 } \right)}C_{\left( {O_{\nu '} ,T'} \right)}^{\lambda _1 } } \,\,\,\bmod \,\hat \bar A^{\lambda _1 } \] where the coefficients $r \in \mathbb{Z}$ are independent of  $O_{\mu _1 } {\text{ and }}S_1 .$
\end{enumerate}
\end{lemma}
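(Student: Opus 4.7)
The plan is to reduce each of the four assertions to its ordinary-product analogue (Corollary \ref{c10.1} or Corollary \ref{c10.2}) and then transfer the expansion to $*_L$ or $*_R$ via Lemma \ref{l10.5}. For part (a), I apply Corollary \ref{c10.1} with $x = {}_{(O_{\mu_1},S_1)}C_{(O_{\nu_1},T_1)}^{\lambda_1}$ to get
\[
{}_{(O_{\mu_1},S_1)}C_{(O_{\nu_1},T_1)}^{\lambda_1} \cdot {}_{(O_{\mu_2},S_2)}C_{(O_{\nu_2},T_2)}^{\lambda_2}
\equiv \sum_{\mu',S'} r \cdot {}_{(O_{\mu'},S')}C_{(O_{\nu_2},T_2)}^{\lambda_2}
\pmod{\hat{\bar A}^{\lambda_2}},
\]
with integer coefficients $r$ independent of $(O_{\nu_2},T_2)$. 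Parts (b), (c), and (d) begin analogously: (b) and (d) from Corollary \ref{c10.2} applied with $x$ equal to the second factor, and (c) from Corollary \ref{c10.1}.

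Next, I invoke Lemma \ref{l10.5}, which asserts that passing from the ordinary product to $*_L$ (resp.\ $*_R$) rescales each structure constant $c(b_1,b_2,b)$ by the factor $n_L(b)/(n_L(b_1) n_L(b_2))$ (resp.\ by its $n_R$ analogue), and that these rescaled constants lie in $\mathbb{Z}$ because $A_L^{\mathbb{Z}}$ and $A_R^{\mathbb{Z}}$ are $\mathbb{Z}$-algebras (\cite{May2}). Since $\hat{\bar A}^{\lambda}$ is by definition a $\mathbb{Z}$-submodule, applying the rescaling term-by-term to a congruence modulo $\hat{\bar A}^{\lambda}$ yields a valid congruence modulo $\hat{\bar A}^{\lambda}$ for the rescaled product.

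The decisive observation is that $n_L(b) = n_L(O_\mu)$ whenever $b \in B(O_\mu,O_\nu)$, so the factor depends only on the left orbit label of $b$; analogously $n_R(b) = n_R(O_\nu)$. For part (a), the factor attached to ${}_{(O_{\mu'},S')}C_{(O_{\nu_2},T_2)}^{\lambda_2}$ is $n_L(O_{\mu'})/(n_L(O_{\mu_1}) n_L(O_{\mu_2}))$, a function of $\mu'$ alone, so the new coefficient $r' = n_L(O_{\mu'})/(n_L(O_{\mu_1}) n_L(O_{\mu_2})) \cdot r$ remains independent of $(O_{\nu_2},T_2)$. For part (b), the factor collapses to $n_L(O_{\mu_1})/(n_L(O_{\mu_1}) n_L(O_{\mu_2})) = 1/n_L(O_{\mu_2})$, which is independent of $(O_{\mu_1},S_1)$, so independence is again preserved. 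Parts (c) and (d) follow symmetrically, replacing $n_L$ by $n_R$ and left orbits by right orbits.

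The only real obstacle is the bookkeeping verification that every rescaling factor is a function of the orbit labels alone, so that the independence assertions of Corollaries \ref{c10.1} and \ref{c10.2} survive the rescaling. This is precisely what is packaged into Lemma \ref{l10.5} together with the orbit-dependence of $n_L(O_\mu)$ and $n_R(O_\nu)$, so beyond careful accounting no additional work is required; integrality of the final coefficients $r'$ is automatic from the fact that $*_L$ and $*_R$ preserve the $\mathbb{Z}$-lattice $\bar A$.
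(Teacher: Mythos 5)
Your proof is correct and follows essentially the same route as the paper: reduce to the ordinary product in $\bar A$ (you use Corollaries \ref{c10.1} and \ref{c10.2}, the paper uses Lemmas \ref{l10.3} and \ref{l10.4} directly, which is immaterial here since both factors are basis elements), then rescale via Lemma \ref{l10.5} and observe that the factors $n_L(O_{\mu'})/(n_L(O_{\mu_1})n_L(O_{\mu_2}))$, $1/n_L(O_{\mu_2})$, and their $n_R$ analogues depend only on the orbit labels, so the independence statements survive. The coefficient bookkeeping you give for parts (a) and (b) matches the paper's computation exactly.
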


\begin{proof}
  Notice that a basis element $b = {}_{\left( {O_\mu  ,S} \right)}C_{\left( {O_\nu  ,T} \right)}^\lambda  $  is in  ${}^{O_\mu  }A^{O_\nu  } $, so in the notation of lemma \ref{l10.5} we have $n_L \left( b \right) = n_L \left( {O_\mu  } \right)\,,\,n_R \left( b \right) = n_R \left( {O_\nu  } \right)$.

For part (a), lemma \ref{l10.3} gives \[{}_{\left( {O_{\mu _1 } ,S_1 } \right)}C_{\left( {O_{\nu _1 } ,T_1 } \right)}^{\lambda _1 }  \cdot {}_{\left( {O_{\mu _2 } ,S_2 } \right)}C_{\left( {O_{\nu _2 } ,T_2 } \right)}^{\lambda _2 }  = \sum\limits_{\mu ',S'} {r' \cdot \,\,{}_{\left( {O_{\mu '} ,S'} \right)}C_{\left( {O_{\nu _2 } ,T_2 } \right)}^{\lambda _2 } \,\,} \,\bmod \,\hat \bar A^{\lambda _2 } \] where the coefficients $r' \in \mathbb{Z}$ are independent of  $O_{\nu _2 } {\text{ and }}T_2 $.  Then lemma \ref{l10.5} gives \[{}_{\left( {O_{\mu _1 } ,S_1 } \right)}C_{\left( {O_{\nu _1 } ,T_1 } \right)}^{\lambda _1 } \,\, * _L \,\,{}_{\left( {O_{\mu _2 } ,S_2 } \right)}C_{\left( {O_{\nu _2 } ,T_2 } \right)}^{\lambda _2 }  =\]\[ \sum\limits_{\mu ',S'} {\frac{{n_L \left( {O_{\mu '} } \right)}}
{{n_L \left( {O_{\mu _1 } } \right)n_L \left( {O_{\mu _2 } } \right)}} \cdot r' \cdot \,\,{}_{\left( {O_{\mu '} ,S'} \right)}C_{\left( {O_{\nu _2 } ,T_2 } \right)}^{\lambda _2 } } \,\,\,\bmod \,\hat \bar A^{\lambda _2 } .\]  Observing that $r = \frac{{n_L \left( {O_{\mu '} } \right)}}
{{n_L \left( {O_{\mu _1 } } \right)n_L \left( {O_{\mu _2 } } \right)}} \cdot r'$   is independent of $O_{\nu _2 } {\text{ and }}T_2 $ gives the result (a).

For (b), lemma \ref{l10.4} gives 
\[{}_{\left( {O_{\mu _1 } ,S_1 } \right)}C_{\left( {O_{\nu _1 } ,T_1 } \right)}^{\lambda _1 }  \cdot {}_{\left( {O_{\mu _2 } ,S_2 } \right)}C_{\left( {O_{\nu _2 } ,T_2 } \right)}^{\lambda _2 }  = \sum\limits_{\nu ',T'} {r' \cdot \,\,{}_{\left( {O_{\mu _1 } ,S_1 } \right)}C_{\left( {O_{\nu '} ,T'} \right)}^{\lambda _1 } } \,\,\,\bmod \,\hat \bar A^{\lambda _1 } \] where the coefficients $r' \in \mathbb{Z}$ are independent of  $O_{\mu _1 } {\text{ and }}S_1 .$  Then lemma \ref{l10.5} gives 
\[{}_{\left( {O_{\mu _1 } ,S_1 } \right)}C_{\left( {O_{\nu _1 } ,T_1 } \right)}^{\lambda _1 } \,\, * _L \,\, {}_{\left( {O_{\mu _2 } ,S_2 } \right)}C_{\left( {O_{\nu _2 } ,T_2 } \right)}^{\lambda _2 }  =\]\[ \sum\limits_{\nu ',T'} {\,\frac{{n_L \left( {O_{\mu _1 } } \right)}}
{{n_L \left( {O_{\mu _1 } } \right)n_L \left( {O_{\mu _2 } } \right)}} \cdot r' \cdot \,{}_{\left( {O_{\mu _1 } ,S_1 } \right)}C_{\left( {O_{\nu '} ,T'} \right)}^{\lambda _1 } } \,\,\,\bmod \,\hat \bar A^{\lambda _1 } .\]   Then $r = \frac{{n_L \left( {O_{\mu _1 } } \right)}}
{{n_L \left( {O_{\mu _1 } } \right)n_L \left( {O_{\mu _2 } } \right)}} \cdot r' = \frac{1}
{{n_L \left( {O_{\mu _2 } } \right)}} \cdot r'$ is independent of $O_{\mu _1 } {\text{ and }}S_1 $ and the result (b) follows.

Parts (c) and (d) are proved similarly. 
\end{proof}

\begin{proposition} \label{p10.2}
  $\left\{ {{}_{\left( {O_\mu  ,S} \right)}C_{\left( {O_\nu  ,T} \right)}^\lambda  } \right\}$ is a cell basis for both $A_L ^\mathbb{Z} $  and $A_R ^\mathbb{Z} $, which are therefore cell algebras.
\end{proposition}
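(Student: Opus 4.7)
The basis set has already been constructed and shown to be a free $\mathbb{Z}$-basis for $\bar{A}$, which is the common underlying $\mathbb{Z}$-module for both $A_L^{\mathbb{Z}}$ and $A_R^{\mathbb{Z}}$. The indexing map $(\lambda,(O_\mu,S),(O_\nu,T))\mapsto {}_{(O_\mu,S)}C^\lambda_{(O_\nu,T)}$ is injective by construction via the isomorphisms $\Phi(O_\mu,O_\nu)$ and the Schur-algebra cell basis $\{{}_SC^\lambda_T\}$; the poset $\Lambda$ is the one inherited from the $\mathbb{Z}[M]$ cell structure of Section~3; and the two-sided ideal $\hat{\bar{A}}^\lambda$ is already defined as the span of basis elements with strictly larger index. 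Therefore the only thing left is to verify conditions (i) and (ii) of Definition~\ref{d2.1} with respect to $*_L$ in $A_L^{\mathbb{Z}}$ and with respect to $*_R$ in $A_R^{\mathbb{Z}}$.

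By $\mathbb{Z}$-linearity it suffices to check each of the four required statements (a pair of axioms for each algebra) on a single basis element $a={}_{(O_{\mu_1},S_1)}C^{\lambda_1}_{(O_{\nu_1},T_1)}$. I would split into two cases according to the ``composition match'' between $a$ and the basis element it is multiplied against. When $\nu_1=\mu_2$, Lemma~\ref{l10.6}(a) gives precisely the expansion required for condition (i) in $A_L^{\mathbb{Z}}$ (coefficients independent of the right index $(O_{\nu_2},T_2)$), and Lemma~\ref{l10.6}(b) gives condition (ii) (coefficients independent of the left index $(O_{\mu_1},S_1)$); parts (c) and (d) of the same lemma do the analogous work in $A_R^{\mathbb{Z}}$. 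When $\nu_1\neq\mu_2$, both of the products $a*_L b$ and $a*_R b$ vanish by definition of these products on $\bar{A}$, so each axiom is satisfied trivially with every structure constant equal to zero.

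The substantive content is already packaged inside Lemma~\ref{l10.6}, so I do not anticipate any real obstacle at this step; the key observation that makes everything work is that the rescaling factors $n_L(O_{\mu'})/(n_L(O_{\mu_1})n_L(O_{\mu_2}))$ distinguishing $*_L$ from the ordinary product depend only on orbit data and not on the semistandard tableau labels, so multiplying the cell-basis expansion for the ordinary product by such a scalar preserves the independence of the structure constants from the ``opposite-side'' index demanded by Definition~\ref{d2.1}. The one point worth noting explicitly is that these rescaled coefficients are \emph{a priori} rational, but they are integers because the structure constants of $*_L$ and $*_R$ on $\bar{A}$ were shown in \cite{May2} to lie in $\mathbb{Z}$; this is what allows the axioms to hold over $R=\mathbb{Z}$, and hence, by base change, over any commutative domain $R$.
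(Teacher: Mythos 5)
Your proposal is correct and follows essentially the same route as the paper: the paper's proof likewise notes that the elements ${}_{\left( {O_\mu  ,S} \right)}C_{\left( {O_\nu  ,T} \right)}^\lambda$ form a basis and then deduces conditions (i) and (ii) of Definition~\ref{d2.1} by linearity from Lemma~\ref{l10.6}. Your additional remarks (the trivial case where the middle compositions fail to match, and the integrality of the rescaled structure constants via \cite{May2}) simply make explicit what the paper leaves implicit.
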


\begin{proof}
  We have shown the $\left\{ {{}_{\left( {O_\mu  ,S} \right)}C_{\left( {O_\nu  ,T} \right)}^\lambda  } \right\}$ form a basis and the multiplication rules (i) and (ii) for a cell algebra then follow at once by linearity from lemma \ref{l10.6}. 
\end{proof}

\begin{corollary} \label{c10.3}
  For any domain $R$, the left and right generalized Schur algebras $LGS_R \left( {M,{\mathbf{G}}} \right) = R \otimes _\mathbb{Z} A_L ^\mathbb{Z} $ and $RGS_R \left( {M,{\mathbf{G}}} \right) = R \otimes _\mathbb{Z} A_R ^\mathbb{Z} $ are cell algebras with a cell basis $\left\{ {{}_{\left( {O_\mu  ,S} \right)}C_{\left( {O_\nu  ,T} \right)}^\lambda  } \right\}$.  
\end{corollary}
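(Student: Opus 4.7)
The plan is to deduce the corollary from Proposition \ref{p10.2} by a straightforward base-change argument, since ``being a cell algebra'' with a fixed $\mathbb{Z}$-basis and integer structure constants transfers from $A_L^{\mathbb{Z}}$ (resp.\ $A_R^{\mathbb{Z}}$) to $R \otimes_{\mathbb{Z}} A_L^{\mathbb{Z}}$ (resp.\ $R \otimes_{\mathbb{Z}} A_R^{\mathbb{Z}}$) for any commutative domain $R$.

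First I would verify that the proposed set is an $R$-basis. By Proposition \ref{p10.2}, the collection $\{{}_{(O_\mu,S)}C_{(O_\nu,T)}^\lambda\}$ is a $\mathbb{Z}$-basis for the free $\mathbb{Z}$-module $A_L^{\mathbb{Z}}$ (and likewise for $A_R^{\mathbb{Z}}$). Tensoring the identification $A_L^{\mathbb{Z}} \cong \bigoplus \mathbb{Z}\cdot {}_{(O_\mu,S)}C_{(O_\nu,T)}^\lambda$ with $R$ over $\mathbb{Z}$ preserves direct sums and sends each rank-one summand $\mathbb{Z}$ to $R$, so $\{1\otimes {}_{(O_\mu,S)}C_{(O_\nu,T)}^\lambda\}$ is a free $R$-basis for $LGS_R(M,\mathbf{G})$, and similarly for $RGS_R(M,\mathbf{G})$. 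The injectivity of the parameterization $(\lambda,(O_\mu,S),(O_\nu,T)) \mapsto {}_{(O_\mu,S)}C_{(O_\nu,T)}^\lambda$ and the partial order on $\Lambda$ carry over verbatim, and the submodules $\hat A^\lambda$ and $A^\lambda$ in the tensored algebra are simply $R \otimes_\mathbb{Z} \hat{\bar A}^\lambda$ and $R \otimes_\mathbb{Z} \bar A^\lambda$ (which are still $R$-spans of the obvious basis subsets since $R \otimes_\mathbb{Z} -$ preserves these free submodules).

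Next I would check that multiplication rules (i) and (ii) of Definition \ref{d2.1} hold in the tensored algebra. Proposition \ref{p10.2} says that in $A_L^{\mathbb{Z}}$ the product $a \cdot {}_{(O_\mu,S)}C_{(O_\nu,T)}^\lambda$, for any basis element $a$, reduces modulo $\hat{\bar A}^\lambda$ to a $\mathbb{Z}$-linear combination of terms ${}_{(O_{\mu'},S')}C_{(O_\nu,T)}^\lambda$ with coefficients independent of $(O_\nu,T)$, and the symmetric statement holds on the right. Applying $R \otimes_\mathbb{Z} -$ to this identity replaces the integer coefficients by their images under the unital ring homomorphism $\mathbb{Z} \to R$, gives exactly the same shape of expansion modulo the ideal $R \otimes_\mathbb{Z} \hat{\bar A}^\lambda$, and yields coefficients in $R$ that are still independent of the right (resp.\ left) index. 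Extending by $R$-linearity from basis elements $a$ to arbitrary $a \in R \otimes_\mathbb{Z} A_L^{\mathbb{Z}}$ establishes properties (i) and (ii). The same argument, using parts (c) and (d) of Lemma \ref{l10.6}, works verbatim for $A_R^{\mathbb{Z}}$.

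There is no real obstacle; the only thing worth pausing over is to note that base change commutes with the formation of the ideals $\hat A^\lambda$ because $\hat{\bar A}^\lambda$ is a direct summand (as an $R$-module) of $A_L^{\mathbb{Z}}$, being the span of a subset of a free basis, so $R \otimes_\mathbb{Z} \hat{\bar A}^\lambda \hookrightarrow R \otimes_\mathbb{Z} A_L^{\mathbb{Z}}$ is the $R$-span of the corresponding basis subset in the tensored algebra. Once this is observed, the proof of the corollary is essentially a one-line invocation: apply $R \otimes_\mathbb{Z} -$ to Proposition \ref{p10.2}.
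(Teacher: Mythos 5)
Your proposal is correct and is exactly the base-change argument the paper implicitly relies on: the paper states Corollary \ref{c10.3} without proof as an immediate consequence of Proposition \ref{p10.2}, and your write-up simply makes explicit why the cell basis, the ideals $\hat{\bar A}^{\lambda}$, and the multiplication rules (i) and (ii) all survive the functor $R \otimes_{\mathbb{Z}} -$. The only cosmetic slip is attributing the multiplication rules to Proposition \ref{p10.2} rather than to Lemma \ref{l10.6}, from which that proposition is deduced; this does not affect the argument.
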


\section  {Irreducible modules for generalized Schur algebras}

     The cell basis $\left\{ {{}_{\left( {O_\mu  ,S} \right)}C_{\left( {O_\nu  ,T} \right)}^\lambda  } \right\}$ for the cell algebra $A_L ^\mathbb{Z} $  or $A_R ^\mathbb{Z} $ found above depends on the choice of an ordering of the orbits of  $\mathfrak{S}_\nu  $ acting on $P_r $ and of an ordering of the subsets in $P_r $ compatible with the ordering of the orbits.  We now choose orderings which will simplify the calculations of the brackets in these cell algebras.

     For $d \in P_r ,\,\,\left| d \right| = i$, define an increasing string of $i$ integers, $s\left( d \right)$, to be the $i$ elements of $d$ arranged in ascending order.  Then define a non-decreasing string of $i$ of positive integers, $s\left( {\nu ,d} \right)$, by replacing each $x \in s\left( d \right)$ by $b\left( x \right)$, where $x$ is in the $b\left( x \right)^{th} $ block $b_\nu  ^{b(x)} $ of the composition $\mathfrak{S}_\nu  $.  Finally, define a string of $r$ non-negative integers, $\bar s\left( {\nu ,d} \right)$, by adding $r - i$ zeroes to the end of $s\left( {\nu ,d} \right)$.  Note that $\bar s\left( {\nu ,d} \right)$ depends only on the $\mathfrak{S}_\nu  $-orbit of $d$.  In fact, $\bar s\left( {\nu ,d} \right) = \bar s\left( {\nu ,d'} \right) \Leftrightarrow d,d'$ are in the same $\mathfrak{S}_\nu  $-orbit.  We then get a total ordering of the $\mathfrak{S}_\nu  $-orbits by ordering the corresponding strings $\bar s\left( {\nu ,d} \right)$
 lexigraphically:  if $\bar s\left( {\nu ,d} \right)_j $ represents the $j$th element of the string, then $\bar s\left( {\nu ,d} \right) < \bar s\left( {\nu ,d'} \right) \Leftrightarrow$  for some $J$ between $1$ and $r$ we have $\bar s\left( {\nu ,d} \right)_j  = \bar s\left( {\nu ,d'} \right)_j \,$
for all $j < J$, while $\bar s\left( {\nu ,d} \right)_J  < \bar s\left( {\nu ,d'} \right)_J $.

    We then define our order on $P_r $ by :  $d < d'$ if $(1)\,\,\,\bar s\left( {\nu ,d} \right) < \bar s\left( {\nu ,d'} \right)$ or $(2)\,\,\,\bar s\left( {\nu ,d} \right) = \bar s\left( {\nu ,d'} \right)$
 (so $d,d'$ are in the same $\mathfrak{S}_\nu  $-orbit) and $s(d) < s(d')$ in lexicographical order.

     Note the following special cases of our ordering: \\
 The smallest set in $P_r $ is the empty set $\emptyset $. \\
 If $\left\{ a \right\},\left\{ b \right\}$ are one element sets in $P_r $, then $\left\{ a \right\} < \left\{ b \right\} \Leftrightarrow a < b$.\\
 If $\left\{ a \right\},d \in P_r $ and $d$ has more than one element with smallest element $b$, then $\left\{ a \right\} < d{\text{ if the }}\nu {\text{ - block containing }}a \leqslant {\text{the }}\nu {\text{ - block containing }}b$, while $\left\{ a \right\} > d{\text{ if the }}\nu {\text{ - block containing }}a > {\text{the }}\nu {\text{ - block containing }}b$.

 We will assume our cell bases are chosen with respect to these orderings.

	In this section we assume that $R = k$ is a field.  We will write $S_L \left( {M,k} \right)$ for the left generalized Schur algebra $LGS_k \left( {M,{\mathbf{G}}} \right) = k \otimes A_L ^\mathbb{Z} $ and $S_R \left( {M,k} \right)$ for the right generalized Schur algebra $RGS_k \left( {M,{\mathbf{G}}} \right) = k \otimes A_R ^\mathbb{Z} $.  For either of these algebras, if $\lambda  \in \Lambda $ then $\lambda  \in \Lambda \left( {i,n} \right)$ for some $i$ with $i \leqslant r \leqslant n .$  Recall that in these cell algebras $\Lambda _0 $ is the subset of $\Lambda $ consisting of $\lambda $ for which the bracket $\left\langle {_{O_\mu  ,S} C_{}^\lambda  ,C_{O_\nu  ,T}^\lambda  } \right\rangle $ is not identically zero.  By corollary \ref{c6.1} , there is one isomorphism class of irreducible modules for each $\lambda  \in \Lambda _0 $.  We will determine $\Lambda _0 $ when $M = \mathcal{T} _r $ or when $M$ contains the rook monoid $\Re _r $.

\begin{thm} \label{tt6.1}
    Let $k$ be a field of characteristic $0$ and let $M = \mathcal{T} _r $.  Then $\Lambda _0  = \Lambda $ for both $S_L \left( {\tau _r ,k} \right)$ and $S_R \left( {\tau _r ,k} \right)$.   Both $S_L \left( {\tau _r ,k} \right)$ and $S_R \left( {\tau _r ,k} \right)$ are quasi-hereditary algebras.   
\end{thm}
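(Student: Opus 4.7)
The plan is to reduce the bracket computation in the generalized Schur algebras to the bracket in the symmetric group algebra $k[\mathfrak{S}_i]$, where characteristic-zero semisimplicity immediately supplies what is needed. By Corollary \ref{c6.1}, quasi-heredity will follow once we show $\Lambda_0 = \Lambda$. Since every $\alpha \in \mathcal{T}_r$ satisfies $\alpha^{-1}(0) = \{0\}$, $\mathcal{T}_r$ contains no zero map, so $I(\mathcal{T}_r) = \{1,2,\ldots,r\}$ and $\Lambda = \bigcup_{i=1}^{r} \Lambda(i)$. For an arbitrary $\lambda \in \Lambda(i)$, it suffices to exhibit a single pair of cell basis elements with nonzero bracket.

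The decisive simplification is the choice of compositions $\mu_0 = \nu_0 := (1^r, 0^{n-r}) \in \Lambda(r, n)$, for which $\mathfrak{S}_{\mu_0} = \{1\}$. Then every $\mathfrak{S}_{\mu_0}$-orbit on $C(i, r)$ and every $\mathfrak{S}_{\nu_0}$-orbit on $D(\mathcal{T}_r, i, r)$ is a singleton; the induced compositions $\mu_0(O_\mu)$ and $\nu_0(O_\nu)$ of $i$ have all parts in $\{0,1\}$, so their semistandard $\lambda$-tableaux are in bijection with standard $\lambda$-tableaux and the Schur-algebra basis element ${}_S C^\lambda_T \in \mathbb{Z}[\mathfrak{S}_i]$ collapses to a Murphy basis element ${}_s C^\lambda_t$; and $n_L(O_{\mu_0}) = n_R(O_{\nu_0}) = 1$ on this block, so by Lemma \ref{l10.5} the products $*_L$ and $*_R$ coincide with the ordinary product of $\bar{A}$. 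Consequently the generalized Schur cell basis element ${}_{(\{C\}, S)} C^\lambda_{(\{D\}, T)} = \Phi(\{C\}, \{D\})({}_s C^\lambda_t) = \phi_C \circ {}_s C^\lambda_t \circ \psi_D$ is exactly the monoid algebra cell basis element ${}_{(C, s)} C^\lambda_{(D, t)} = H_{C, D}({}_s C^\lambda_t)$ of section 3, and the bracket in $S_L(\mathcal{T}_r, k)$ or $S_R(\mathcal{T}_r, k)$ of such elements agrees with the bracket in $k[\mathcal{T}_r]$.

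To conclude, I pick any partition $D \in D(\mathcal{T}_r, i, r)$ of $\bar{r}$ into $i$ nonempty blocks (possible since $i \leq r$) and choose $C \in C(i, r)$ by taking one representative from each block of $D$, so that $\rho = \psi_D \circ \phi_C = \mathrm{id}_{\bar{i}}$. Lemma \ref{ll3.2}(b) with $\pi = \mathrm{id}$ then gives $\langle C^\lambda_{(D, t)}, {}_{(C, s)} C^\lambda \rangle = \langle C^\lambda_t, {}_s C^\lambda \rangle_i$ for any standard $\lambda$-tableaux $s, t$. In characteristic $0$, $k[\mathfrak{S}_i]$ is semisimple, every Specht module is irreducible, and the cell pairing $\langle -, - \rangle_i$ is non-degenerate, so appropriate $s, t$ make this bracket nonzero. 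Transferring through the identification above produces a nonzero bracket in both $S_L(\mathcal{T}_r, k)$ and $S_R(\mathcal{T}_r, k)$, so $\lambda \in \Lambda_0$ in each. Since $\lambda$ was arbitrary, $\Lambda_0 = \Lambda$, and Corollary \ref{c6.1} finishes the proof. The main obstacle, and the only genuine bookkeeping, is verifying that on the $(\mu_0, \mu_0)$-block the generalized Schur cell structure matches the monoid algebra cell structure of section 3 --- triviality of $\mathfrak{S}_{\mu_0}$ collapses the orbit sum in $\Phi$ onto a single term $H_{C, D}$, and unit scaling factors make $*_L, *_R$ coincide with the ordinary product, after which Lemma \ref{ll3.2}(b) does the remaining work.
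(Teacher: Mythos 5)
Your proof is correct, but it takes a genuinely different route from the paper's. The paper handles each $\lambda$ of index $i$ by an explicit computation inside a block tailored to $\lambda$: it adjoins $r-i$ parts equal to $1$ to $\lambda$ to form $\mu$, takes $C=\{1,\dots,i\}$, a $D$ made of $i-1$ singletons and the block $\{i,i+1,\dots,r\}$, and the unique semistandard tableaux $S,T$, and then computes directly that $b\cdot b=\lambda_k\, o(\mathfrak{S}_\lambda)\, b$, hence $b*_Lb=b*_Rb=\lambda_k\, b$, so the bracket equals $\lambda_k\neq 0$ in characteristic $0$; Corollary \ref{c6.1} then gives quasi-heredity, as in your argument. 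You instead work in the weight-$\omega$ block: with $\mu_0=\nu_0=(1^r,0^{n-r})$ all orbits are singletons, every $n_L$ and $n_R$ equals $1$ so $*_L$ and $*_R$ restrict to the ordinary product, ${}^{\mu_0}A^{\nu_0}$ is a copy of $\mathbb{Z}[\mathcal{T}_r]$, and $\Phi(\{C\},\{D\})$ degenerates to $H_{C,D}$, so the generalized Schur cell data on this block is the section~3 cell data for the monoid algebra; Lemma \ref{ll3.2}(b) (the mechanism of Proposition \ref{pp3.1}) and semisimplicity of $k[\mathfrak{S}_i]$ in characteristic $0$ then finish. This is more conceptual, and in effect it shows that $\Lambda_0$ for $k[\mathcal{T}_r]$ is contained in $\Lambda_0$ for both $S_L$ and $S_R$, with characteristic $0$ entering only through the symmetric group. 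The price is exactly the bookkeeping you flag: you need that on $0$--$1$ types the semistandard basis element ${}_SC^\lambda_T$ restricts to the Murphy element ${}_sC^\lambda_t$; that the bracket identification across the block uses the independence of the auxiliary indices (Lemma \ref{l2.1}) together with $\hat A^\lambda\cap{}^{\mu}A^{\nu}=\hat{\bar A}^{\lambda}\cap{}^{\mu}A^{\nu}$; and that some $C$ gives $\rho=\psi_D\circ\phi_C=\mathrm{id}$ (take $c_j$ to be the minimum of the $j$th block of $D$ in the chosen ordering, or simply note $\rho$ is bijective and take $\pi=\rho^{-1}$ as in Proposition \ref{pp3.1}). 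What the paper's computation buys in exchange is an explicit bracket value ($\lambda_k$, the last nonzero part), and that explicit template is what gets reused in the characteristic-$p$ arguments of Lemmas \ref{ll6.1} and \ref{ll6.2}, where your semisimplicity shortcut is unavailable.
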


\begin{proof}
   Take any $\lambda  \in \Lambda $ with ${\text{index}}(\lambda ) = i > 0$.  Let $k$ be the largest index such that $\lambda _k  > 0$, so $\lambda _j  = 0\,,\,j > k$.  Let $\mu $ be the partition of $r$ where
   \[\mu _j  = 
      \begin{cases}
      \lambda _j      &\text{if $j \leqslant k$ }  \\
      1               &\text{if $j = k + 1,k + 2, \cdots ,k + (r - i)$ .}       
      \end{cases}
      \]

     Let $C = \left\{ {1,2, \cdots ,i} \right\}$.  Then $\mu \left( C \right) = \lambda  \in \Lambda \left( {i,n} \right)$.  Let $S$ be the semistandard $\lambda $-tableau of type $\mu \left( C \right)$ where $S_{j,l}  = j\,,\,1 \leqslant l \leqslant \lambda _j \,,\,j = 1,2, \cdots ,k$.  There is only one standard $\lambda $-tableau of type $S$, namely $s = id$, the identity in $\mathfrak{S}_i $.  The orbit $O\left( {C,\mu } \right) = \left\{ C \right\}$, so $\# O\left( {C,\mu } \right) = 1$.  Also, $\phi _C :\bar i \to \bar r$ is the identity $\phi _C (j) = j\,,\,j = 1,2, \cdots ,i$.

     Next let  $D = \left\{ {\left\{ 1 \right\},\left\{ 2 \right\}, \cdots ,\left\{ {i - 1} \right\},\left\{ {i,i + 1,i + 2, \cdots ,r} \right\}} \right\}$.  Then $\mu \left( D \right) \in \Lambda \left( {i,n} \right)$ is given by 
\[ \mu \left( D \right)_j  =
       \begin{cases}
       \lambda _j        &\text{if $j < k$ }  \\
       \lambda _k  - 1   &\text{if $j = k$ }  \\
       1                 &\text{if $j = k + 1$ }  \\
       0                 &\text{if $j > k + 1$ .}
       \end{cases}
\]
  Let $T$ be the semistandard $\lambda $-tableau of type $\mu \left( D \right)$ where 
\[ T_{j,l}  =
    \begin{cases}
    j          &\text{for $1 \leqslant l \leqslant \lambda _j \,,\,j = 1,2, \cdots ,k - 1$}  \\
    k          &\text{for $1 \leqslant l \leqslant \lambda _k  - 1\,,\,j = k$ }  \\
    k + 1      &\text{for  $l = \lambda _k \,,\,j = k$ .}
    \end{cases}
    \]
There is only one standard $\lambda $-tableau of type $T$, namely $t = id$, the identity in $\mathfrak{S}_i $.  Let $b_\mu  ^k $ be the $k$th block in the partition $\mu $.  For $a \in b_\mu  ^k $, define $D_a  = \left\{ {\left\{ 1 \right\},\left\{ 2 \right\}, \cdots ,\left\{ {i - 1} \right\},\left\{ i \right\},\left\{ {a,i + 1,i + 2, \cdots ,r} \right\}} \right\} - \left\{ {\left\{ a \right\}} \right\}$.  Then the orbit $O\left( {D,\mu } \right) = \left\{ {D_a :a \in b_\mu  ^k } \right\}$ and $\# O(D,\mu ) = \left| {b_\mu  ^k } \right| = \mu _k  = \lambda _k $.   Also $\psi _{D_a } :  \bar r \to \bar i$ is given by 
\[ \psi _{D_a } (j) =
    \begin{cases}
    j             &\text{if $j < a$ }  \\
    j - 1         &\text{if $a < j \leqslant i$ }  \\
    i             &\text{if $j = a$ or $j > i$ .}
 \end{cases}
\]
Then $\psi _{D_a }  \circ \phi _C :\bar i \to \bar i$ is a cyclic permutation 
$\sigma _a  = \left( {a,i,i - 1,i - 2, \cdots ,a + 1} \right) \in \mathfrak{S}_\lambda  .$

    We have $\,_S C_T^\lambda   = \,_s C_t^\lambda   = id \cdot r_\lambda   \cdot id = r_\lambda  $, so 
\[ \,_S C_T^\lambda   \circ \psi _{D_a }  \circ \phi _C  \circ \,_S C_T^\lambda   = r_\lambda   \cdot \sigma _a  \cdot r_\lambda   = r_\lambda   \cdot r_\lambda   = o\left( {\mathfrak{S}_\lambda  } \right)\,r_\lambda  .\]
Then writing $ b = _{O\left( {C,\mu } \right),S} C_{O(D,\mu ),T}^\lambda   = \phi _C  \cdot \,_S C_T^\lambda   \cdot \sum\limits_{a \in b_\mu  ^k } {\psi _{D_a } } ,$ compute 
\[\begin{aligned}
 b \cdot b  &= \phi _C  \cdot \,_S C_T^\lambda   \cdot \sum\limits_{a \in b_\mu  ^k } {\psi _{D_a } }  \cdot \phi _C  \cdot \,_S C_T^\lambda   \cdot \sum\limits_{a \in b_\mu  ^k } {\psi _{D_a } } \\
            &= \lambda _k  \cdot \phi _C  \cdot o\left( {\mathfrak{S}_\lambda  } \right) \cdot r_\lambda   \cdot \sum\limits_{a \in b_\mu  ^k } {\psi _{D_a } } \\
            &= \lambda _k  \cdot o\left( {\mathfrak{S}_\lambda  } \right) \cdot b .
\end{aligned} .\]
Then 
\[ b * _L b = \lambda _k  \cdot o\left( {\mathfrak{S}_\lambda  } \right) \cdot \frac{{n_L (b)}}
{{n_L \left( b \right)n_L (b)}} \cdot b = \lambda _k  \cdot o\left( {\mathfrak{S}_\lambda  } \right) \cdot \frac{1}
{{n_L \left( b \right)}} \cdot b = \lambda _k  \cdot b \]
 where
\[ n_L (b) = n_L \left( {O\left( {C,\mu } \right)} \right) = \# O\left( {C,\mu } \right) \cdot o(\mathfrak{S}_{\mu (C)} ) = 1 \cdot o(\mathfrak{S}_\lambda  ).\]
Similarly,
\[ b * _R b = \lambda _k  \cdot o\left( {\mathfrak{S}_\lambda  } \right) \cdot \frac{{n_R (b)}}
{{n_R \left( b \right)n_R (b)}} \cdot b = \lambda _k  \cdot o\left( {\mathfrak{S}_\lambda  } \right) \cdot \frac{1}
{{n_R \left( b \right)}} \cdot b = \lambda _k  \cdot b \]
where
\[ n_R (b) = n_R \left( {O\left( {D,\mu } \right)} \right) = \# O\left( {D,\mu } \right) \cdot o(\mathfrak{S}_{\mu (D)} ) = \lambda _k  \cdot o(\mathfrak{S}_\lambda  )/\lambda _k .\]

     Then computing the bracket (in either $S_L \left( {\tau _r ,k} \right)$ or $S_R \left( {\tau _r ,k} \right)$) we find that $\left\langle {C_{O(D,\mu ),T}^\lambda  \,,\,\,_{O\left( {C,\mu } \right),S} C_{}^\lambda  } \right\rangle  = \lambda _k  \ne 0$ (since characteristic of $k$ is $0$).  So $\lambda  \in \Lambda _0 $.

     By corollary \ref{c6.1}, a cell algebra with $\Lambda _0  = \Lambda $ is quasi-hereditary, so the proof is complete.
\end{proof}

Now assume $k$ is a field of characteristic $p > 0$.  For a partition $\lambda  \in \Lambda \left( i \right)\,,\,1 \leqslant i \leqslant r$, define an integer $k\left( {p,\lambda } \right) \geqslant 0$ to be the highest power of $p$ which divides $\lambda _j $ for every $j$.  (So for all $j$,  $\left. {p^{k(p,\lambda )} } \right|\lambda _j $, while for at least one $j$, $p^{k(p,\lambda ) + 1} $ does not divide $\lambda _j $.)  Then define $\Lambda _p  = \left\{ {\lambda  \in \Lambda :p^{k(p,\lambda )} {\text{ divides }}r - i{\text{, where }}i = {\text{index}}(\lambda )} \right\}$.  

\begin{lemma} \label{ll6.1}
  For a field $k$ of characteristic $p$ and the cell algebra $S_R \left( {\mathcal{T} _r ,k} \right)$ , $\Lambda _p  \subseteq \Lambda _0 $.
\end{lemma}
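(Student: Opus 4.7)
The plan is to extend the construction from the proof of Theorem~\ref{tt6.1} to characteristic $p$, exhibiting for each $\lambda \in \Lambda_p$ a cell basis element $b$ such that $b *_R b \equiv c \cdot b \pmod{\hat{\bar A}^\lambda}$ for a scalar $c$ that is a unit modulo $p$; this shows the bracket $\langle C_{(O_\nu,T)}^\lambda,\, {}_{(O_\mu,S)}C^\lambda\rangle$ is nonzero and hence $\lambda \in \Lambda_0$.

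Fix $\lambda \in \Lambda_p$ of index $i$, set $q = p^{k(p,\lambda)}$ and $s = (r-i)/q \in \mathbb{Z}$, and choose an index $k$ with $v_p(\lambda_k) = k(p,\lambda)$, so that $\lambda_k/q$ is coprime to $p$. Build $\mu \in \Lambda(r,n)$ by appending to the nonzero parts of $\lambda$ a string of $s$ parts each of size $q$, so that $\sum_j \mu_j = r$. Take $C, D, S, T$ essentially as in the char-$0$ proof: $C = \{1,\dots,i\}$ (making $\mu(C) = \lambda$ and $|O(\mu,C)| = 1$), an anchor element $a \in b_\lambda^k$ with the $r - i$ trailing elements absorbed into a single large component of $D$ via the anchor, and $S, T$ yielding $_sC_t^\lambda = r_\lambda$. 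Under this setup the ordinary product $b \cdot b$ in $\bar A$ again satisfies $b \cdot b \equiv \lambda_k \cdot o(\mathfrak{S}_\lambda) \cdot b \pmod{\hat{\bar A}^\lambda}$, by the same computation as in Theorem~\ref{tt6.1}: for each $a'$ in the $\mathfrak{S}_\mu$-orbit of the anchor, $\psi_{D_{a'}} \circ \phi_C$ is a cycle lying in $\mathfrak{S}_\lambda$, and $r_\lambda \cdot \sigma \cdot r_\lambda = o(\mathfrak{S}_\lambda)\, r_\lambda$.

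The key new input lies in the normalization. Because $\mu$ now carries $s$ trailing $q$-blocks, the orbit $O(\mu,D)$ and the composition $\mu(D)$ acquire additional contributions from these blocks, changing $n_R(b) = |O(\mu,D)| \cdot |\mathfrak{S}_{\mu(D)}|$ compared with the char-$0$ case. After a careful choice of $D$ --- specifically, letting the orbit of the anchor run over a larger set and/or inserting a singleton from a trailing $q$-block, while still preserving $|C \cap d| = 1$ for every $d \in D$ so that Lemma~\ref{ll3.2}(b) applies --- the discrepancy between $n_R(b)$ and $o(\mathfrak{S}_\lambda)$ is exactly a factor of $q$ times an integer coprime to $p$. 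By Lemma~\ref{l10.5}, $b *_R b \equiv (\lambda_k / q) \cdot u \cdot b \pmod{\hat{\bar A}^\lambda}$ with $u$ coprime to $p$, giving the desired unit-valued bracket since $\lambda_k/q$ is coprime to $p$ by choice of $k$.

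The main obstacle is the precise combinatorial design of $D$ (together with a possibly perturbed $\mu$) that simultaneously satisfies $|C \cap d| = 1$ for every $d \in D$ and produces exactly one surplus factor of $q$ in $n_R(b)$. A naive choice with $\mu = (\lambda_1,\dots,\lambda_\ell, q, \dots, q)$ and the char-$0$ form of $D$ leaves $n_R(b) = o(\mathfrak{S}_\lambda)$ unchanged, because the $q$-blocks are buried inside the large component and do not surface as separate parts of $\mu(D)$. The fix presumably involves either enlarging the $k$-th part of $\mu$ to $\lambda_k + q$ (so that $|O(\mu,C)|$ becomes $\binom{\lambda_k + q}{\lambda_k}$ and absorbs the missing factor), or instead running the parallel argument on $S_L(\mathcal{T}_r,k)$ via Lemma~\ref{l10.6}(a) using the $*_L$ normalization. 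Identifying this adjustment, checking that the requisite semistandard $\lambda$-tableau of type $\mu(C)$ exists under the perturbed $\mu$, and verifying the final $p$-adic valuation of the surviving scalar is the technical heart of the proof.
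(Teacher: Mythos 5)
Your overall strategy is the same as the paper's (exhibit one explicit basis element $b$ with $b *_R b \equiv c\,b \bmod \hat{\bar A}^\lambda$, $c$ a unit mod $p$), and you have even correctly guessed the target value: the paper's scalar is $\binom{\lambda_a}{p^k}$, which is congruent mod $p$ to your $\lambda_a/p^{k}$ up to a unit. But the proof as you present it has a genuine gap, and you say so yourself: the ``technical heart'' --- the actual choice of $\mu$ and $D$ that makes the normalization come out right --- is left open, and the two fixes you float are not the right ones. The paper's construction is different from both: with $a$ the lowest row of $\lambda$ whose $p$-adic valuation is exactly $k=k(p,\lambda)$ and $q=(r-i)/p^{k}$, one takes $\mu$ to \emph{split} row $a$ into parts $\lambda_a-p^{k}$ and $p^{k}$ (so $\mu(C)$ is a proper refinement of $\lambda$, not $\lambda$ itself) and appends $q$ parts of size $p^{k}$; and $D$ is \emph{not} the char-$0$ single-big-set configuration but consists of $i-p^{k}$ singletons together with $p^{k}$ sets of size $q+1$, each containing one of the $p^{k}$ split-off positions and one point from each trailing $p^{k}$-block. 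With this choice $\psi_{D_\sigma}\circ\phi_C=\mathrm{id}$, the orbit $O(D,\mu)$ has size $(p^{k}!)^{q}$, which exactly cancels the factor $o(G)$ appearing in $b\cdot b$, and the $n_R$-normalization of Lemma \ref{l10.5} leaves precisely $o(\mathfrak{S}_\lambda)/o(\mathfrak{S}_{\mu(D)})=\binom{\lambda_a}{p^{k}}$, nonzero in $k$ because $p^{k}$ exactly divides $\lambda_a$.

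Two further points where your sketch would not survive as written. First, your suggested repair of ``enlarging the $k$-th part of $\mu$ to $\lambda_k+q$'' or ``running the parallel argument on $S_L$'' misses the mark: the former threatens both the composition condition $\sum_j\mu_j=r$ and the existence of a semistandard $\lambda$-tableau of type $\mu(C)$ (existence requires $\lambda$ to dominate the type), while the latter proves a statement about $S_L(\mathcal{T}_r,k)$, whose answer is the different set $\Lambda_{L,p}$ of Lemma \ref{ll6.2}, and says nothing about $S_R$. Second, your intermediate claim that $b\cdot b\equiv \lambda_k\,o(\mathfrak{S}_\lambda)\,b$ ``by the same computation as in Theorem \ref{tt6.1}'' is unjustified once the anchor row is chosen to be the row with $v_p(\lambda_k)=k(p,\lambda)$ rather than the last row: in the char-$0$ proof the cycles $\psi_{D_a}\circ\phi_C$ lie in $\mathfrak{S}_\lambda$ only because the anchor block is the \emph{last} row of $\lambda$, so $r_\lambda\cdot\sigma_a\cdot r_\lambda=o(\mathfrak{S}_\lambda)r_\lambda$; for an interior anchor row the cycle leaves $\mathfrak{S}_\lambda$ and that identity fails. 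The paper sidesteps this entirely by arranging $\psi_{D_\sigma}\circ\phi_C=\mathrm{id}$, which is exactly what the redesigned $D$ buys. So the missing combinatorial design is not a routine verification but the actual content of the lemma, and your proposal does not supply it.
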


\begin{proof}
  Take any $\lambda  \in \Lambda _p $ of index $i$.  Let $m$ be the lowest nonzero row of $\lambda $, that is, assume $\lambda _m  > 0\,,\,\lambda _j  = 0{\text{ for }}j > m$.  Put $k = k\left( {p,\lambda } \right)$.  Let $a$ be the lowest row (i.e.,largest integer) such that $p^{k + 1} $  does not divide $\lambda _a $.  Since $\lambda  \in \Lambda _p $, $p^k $ divides $r - i$, so $q = (r - i)/p^k$ is an integer.
Define a composition $\mu $ of $r$ by splitting off the last $p^k $ elements of row $a$ of $\lambda $ and then adding $q$ additional rows of size $p^k $:
\[ \mu _j  =
    \begin{cases}
        \lambda _j           &\text{if $j < a$ }  \\
        \lambda _a  - p^k    &\text{if $j = a$ }  \\
        p^k                  &\text{if $j = a + 1$ }  \\
        \lambda _{a + l}     &\text{if $j = a + l + 1$ for $l = 1,2, \cdots ,m - a$ }  \\
        p^k                  &\text{if $j = m + 1 + l$ for $l = 1,2, \cdots ,q$ }
      \end{cases}
\]

     Let $C = \left\{ {1,2, \cdots ,i} \right\}$.  Then a composition of $i$ is given by $\mu \left( C \right)_j  = \mu _j \,,\,1 \leqslant j \leqslant m + 1$.  Let $S$ be the semistandard $\lambda $-tableau of type $\mu \left( C \right)$ where 
\[ S_{j,l}  =
   \begin{cases}
     j               &\text{for $1 \leqslant l \leqslant \lambda _j \,,\,j < a$ }  \\
     a               &\text{for $1 \leqslant l \leqslant \lambda _a  - p^k \,,\,j = a$ }  \\
     a + 1           &\text{for $\lambda _a  - p^k  < l \leqslant \lambda _a \,,\,j = a$ }  \\
     j + 1           &\text{for $1 \leqslant l \leqslant \lambda _j \,,\,a < j \leqslant m$ .}
    \end{cases}
\]
 There is only one standard $\lambda $-tableau of type $S$, namely $s = id$, the identity in $\mathfrak{S}_i $.  The orbit $O\left( {C,\mu } \right) = \left\{ C \right\}$, so $\# O\left( {C,\mu } \right) = 1$.  Also, $\phi _C :\bar i \to \bar r$ is the identity $\phi _C (j) = j\,,\,j = 1,2, \cdots ,i$.

     Now define $D \in D\left( {i,\tau _r ,r} \right)$ as follows:  $D$ contains $i - p^k $ single elements sets, one set $\left\{ l \right\}$ for each entry $l$ in rows $1$ through $a$ or rows $a + 2$ through $m + 1$ of $\mu $.  $D$ also contains $p^k $ sets with $q + 1$ entries:  for $1 \leqslant j \leqslant p^k $, the $j$ set $D_j $ contains the $j$th entry in row $a + 1$ and in each of the last $q$ rows of $\mu $.  As a composition of $i$, $\mu \left( D \right) = \mu \left( C \right)$, so we can take $T = S$ as a semistandard $\lambda $-tableau of type $\mu \left( D \right)$.  Then again there is only one standard $\lambda $-tableau of type $T$, namely $t = id$, the identity in $\mathfrak{S}_i $.  

     To define the orbit space $O\left( {D,\mu } \right)$, let $\mathfrak{S}_{\mu _{m + 1 + j} }  \subseteq \mathfrak{S}_\mu  $ be the group of permutations of  row $m + 1 + j$ of $\mu $.  For each $1 \leqslant j \leqslant q$, $\mathfrak{S}_{\mu _{m + 1 + j} }  \cong \mathfrak{S}_{p^k } $.  Let $G = \prod\limits_{j = 1}^q {\mathfrak{S}_{\mu _{m + 1 + j} } }  \subseteq \mathfrak{S}_\mu  $
 and for $\sigma  \in G$ let $D_\sigma   = D\sigma  \in D\left( {i,\tau _r ,r} \right)$.  Then $O\left( {D,\mu } \right) = \left\{ {D_\sigma  :\sigma  \in G} \right\}$.  Then $\# O\left( {D,\mu } \right) = o\left( G \right) = o\left( {\left( {\mathfrak{S}_{p^k } } \right)^q } \right) = \left( {p^k !} \right)^q $.  

      Notice that with our choice of an ordering of the subsets of $\bar r$, we get $\psi _{D_\sigma  } \left( j \right) = j\,,\,1 \leqslant j \leqslant i$, for any $\sigma  \in G$.  Then $\psi _{D_\sigma  }  \circ \phi _C  = id$, the identity mapping $\bar i \to \bar i$.  We have $ \,_S C_T^\lambda   = \,_s C_t^\lambda   = id \cdot r_\lambda   \cdot id = r_\lambda ,$ so
\[ \,_S C_T^\lambda   \circ \psi _{D_\sigma  }  \circ \phi _C  \circ \,_S C_T^\lambda   = r_\lambda   \cdot id \cdot r_\lambda   = r_\lambda   \cdot r_\lambda   = o\left( {\mathfrak{S}_\lambda  } \right)\,r_\lambda .\]
 Then writing $ b = _{O\left( {C,\mu } \right),S} C_{O(D,\mu ),T}^\lambda   = \phi _C  \cdot \,_S C_T^\lambda   \cdot \sum\limits_{\sigma  \in G} {\psi _{D_\sigma  } } ,$ compute
\[\begin{aligned}
 b \cdot b &= \phi _C  \cdot \,_S C_T^\lambda   \cdot \sum\limits_{\sigma  \in G} {\psi _{D_\sigma  } }  \cdot \phi _C  \cdot \,_S C_T^\lambda   \cdot \sum\limits_{\sigma  \in G} {\psi _{D_\sigma  } } \\
           &= o\left( G \right) \cdot \phi _C  \cdot o\left( {\mathfrak{S}_\lambda  } \right) \cdot r_\lambda   \cdot \sum\limits_{\sigma  \in G} {\psi _{D_\sigma  } } \\
           &= o\left( G \right) \cdot o\left( {\mathfrak{S}_\lambda  } \right) \cdot b .
\end{aligned} \]
Then
\[\begin{aligned}
 b * _R b &= o\left( G \right) \cdot o\left( {\mathfrak{S}_\lambda  } \right) \cdot \frac{{n_R (b)}}
{{n_R \left( b \right)n_R (b)}} \cdot b \\
          &= o\left( G \right) \cdot o\left( {\mathfrak{S}_\lambda  } \right) \cdot \frac{1}
{{n_R \left( b \right)}} \cdot b \\
          &= \frac{{o\left( G \right)o\left( {\mathfrak{S}_\lambda  } \right)}}
{{\# O\left( {D,\mu } \right)o\left( {\mathfrak{S}_{\mu \left( D \right)} } \right)}} \cdot b 
\end{aligned}
\]
where $ n_R (b) = n_R \left( {O\left( {D,\mu } \right)} \right) = \# O\left( {D,\mu } \right) \cdot o(\mathfrak{S}_{\mu (D)} ) .$ Since $\# O\left( {D,\mu } \right) = o\left( G \right)$ and $\frac{{o\left( {\mathfrak{S}_\lambda  } \right)}}
{{o\left( {\mathfrak{S}_{\mu (D)} } \right)}} = \frac{{\lambda _a !}}
{{\left( {\lambda _a  - p^k } \right)!p^k !}} = \left( {\begin{array}{*{20}c}
   {\lambda _a }  \\
   {p^k }  \\

 \end{array} } \right)$, we get $b * _R b = \left( {\begin{array}{*{20}c}
   {\lambda _a }  \\
   {p^k }  \\

 \end{array} } \right) \cdot b$.

    Computing the bracket gives $\left\langle {C_{O(D,\mu ),T}^\lambda  \,,\,\,_{O\left( {C,\mu } \right),S} C_{}^\lambda  } \right\rangle  = \left( {\begin{array}{*{20}c}
   {\lambda _a }  \\
   {p^k }  \\

 \end{array} } \right)$.  Since $p^k $ divides $\lambda _a $ but $p^{k + 1} $ does not, it is easily checked that $\left( {\begin{array}{*{20}c}
   {\lambda _a }  \\
   {p^k }  \\

 \end{array} } \right)$ is not congruent to 0 mod $p$.   So the bracket is not identically zero in $S_R \left( {\tau _r ,k} \right)$  and $\lambda  \in \Lambda _0 $ as desired.  
\end{proof}

     We claim that in fact $\Lambda _p  = \Lambda _0 $, that is, that every irreducible representation of  $S_R \left( {\mathcal{T} _r ,k} \right)$
 corresponds to some $\lambda  \in \Lambda _p $.  In \cite{May} or \cite{May4}, a parameterization of the isomorphism classes of  irreducible representations of  $S_R \left( {\mathcal{T} _r ,k} \right)$ is given.  There is one isomorphism class corresponding to the following set of data:  i) a set of nonnegative integers $s_m ,s_{m + 1} , \cdots ,s_M $ with $s_m  > 0,m \geqslant 0$ such that $r = s_m p^m  + s_{m + 1} p^{m + 1}  +  \cdots  + s_M p^M $
, ii) a $p$-restricted partition of $s_i $ for each $m < i \leqslant M$, and iii) a $p$-restricted partition of $i$ for some integer $1 \leqslant i \leqslant s_m $.  (The index of the corresponding irreducible is $r - (s_m  - i)p^m $.)  We will show that each such set of data corresponds with a unique element $\lambda  \in \Lambda _p $, so that  the number of isomorphism classes is less than or equal to $\# \Lambda _p $.   But we know that the number of isomorphism classes is $\# \Lambda _0 $ and by the lemma $\Lambda _p  \subseteq \Lambda _0 $.  So we must have $\Lambda _p  = \Lambda _0 $ . 

     We will need a certain ``decomposition'' operation on partitions.  For any integer $n > 0$, let $\lambda $ be a partition of $n$ with $R$ non-zero parts, $\lambda _1  + \lambda _2  +  \cdots  + \lambda _R  = n$.  For $1 \leqslant i \leqslant R$ define the row length differences $\Delta _i  = \lambda _i  - \lambda _{i + 1} $.  Then define an integer $k\left( \lambda  \right) \geqslant 0$ to be the highest power of $p$ which is less than or equal to at least one $\Delta _i $.  Then we can find nonnegative integers $q_i ,r_i $ such that $\Delta _i  = q_i p^{k\left( \lambda  \right)}  + r_i $ where  each $r_i  < p^{k(\lambda )} $ , each $q_i  < p$, and at least one $q_i  > 0$.  Define $s\left( \lambda  \right) = \sum\limits_{i = 1}^R {i \cdot q_i } $.  We will construct a $p$-restricted partition of $s\left( \lambda  \right)$ and a partition $\bar \lambda $ of  $n - s\left( \lambda  \right)p^{k\left( \lambda  \right)} $ with $k\left( {\bar \lambda } \right) < k\left( \lambda  \right)$.  Notice that there are $\Delta _i  = q_i p^{k\left( \lambda  \right)}  + r_i $ columns of height $i$ in $\lambda $.  We break $\lambda $ into two partitions $\lambda _q ,\lambda _r $ by placing $q_i p^{k\left( \lambda  \right)} $ columns of height $i$ in the first partition and $r_i $ columns of height $i$ in the second.  Then $\lambda _q $ is a partition of $i \cdot q_i p^{k(\lambda )}  = s\left( \lambda  \right)p^{k\left( \lambda  \right)} $ with row differences $q_i p^{k\left( \lambda  \right)} $.  By replacing each set of  $p^{k\left( \lambda  \right)} $ consecutive boxes in a row of $\lambda _q $ by a single box, we obtain a partition of $s\left( \lambda  \right)$ with row differences $q_i  < p$, i.e, a $p$-restricted partition of $s\left( \lambda  \right)$.  The second partition $\lambda _r $ is a partition of $n - s\left( \lambda  \right)p^{k\left( \lambda  \right)} $ with row differences $r_i  < p^{k(\lambda )} $.  Then $k\left( {\lambda _r } \right) < k\left( \lambda  \right)$ and we take $\bar \lambda  = \lambda _r $.  

     We can now replace $\lambda $ with $\lambda _r $ and iterate the construction until we reach a case when all $r_i $ are 0.  The result is a sequence of nonnegative integers $s_m ,s_{m + 1} , \cdots ,s_M $ with $s_m  > 0 , m \geqslant 0$ such that $n = s_m p^m  + s_{m + 1} p^{m + 1}  +  \cdots  + s_M p^M $ and a $p$-restricted partition of $s_i $ for each $m \leqslant i \leqslant M$.  By replacing each box in the partition of  $s_i $ by a row of $p^i $ boxes and then joining the resulting partitions (taking the union of the boxes in each row of each partition) we recover uniquely the original partition $\lambda $ of $n$.  Notice that $k\left( {p,\lambda } \right) = m$.

     Now take any isomorphism class of irreducible $S_R \left( {\tau _r ,k} \right)$ modules and consider the unique corresponding data
  i) nonnegative integers $s_m ,s_{m + 1} , \cdots ,s_M $ with $s_m  > 0$ such that $r = s_m p^m  + s_{m + 1} p^{m + 1}  +  \cdots  + s_M p^M $, 
ii) a $p$-restricted partition of $s_i $
for each $m < i \leqslant M$, and 
iii) a $p$-restricted partition of $s'_m $ for some integer $1 \leqslant s'_m  \leqslant s_m $. 

Then  $s'_m p^m  + s_{m + 1} p^{m + 1}  +  \cdots  + s_M p^M  = r - \left( {s_m  - s'_m } \right)p^m $, so our construction gives a unique partition $\lambda $ of  $r - \left( {s_m  - s'_m } \right)p^m $ with $k\left( {p,\lambda } \right) = m$.  Then $p^{k\left( {p,\lambda } \right)}  = p^m $ divides  $r - {\text{index}}(\lambda ) = r - (r - \left( {s_m  - s'_m } \right)p^m ) = \left( {s_m  - s'_m } \right)p^m $, so $\lambda  \in \Lambda _p $.  So the number of isomorphism classes is $ \leqslant \# \Lambda _p $ as desired.      

     As remarked above, this proves the following result.

\begin{thm} \label{tt6.2} 
  If $k$ is a field of characteristic $p$, then $\Lambda _p  = \Lambda _0 $  in $S_R \left( {\mathcal{T} _r ,k} \right)$. 
\end{thm}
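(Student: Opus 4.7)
The proof is essentially a synthesis of the three ingredients developed in the preceding paragraphs, so my plan is to assemble them rather than to develop new machinery. The goal is to show the two inclusions $\Lambda_p \subseteq \Lambda_0$ and $|\Lambda_0| \leq |\Lambda_p|$, then to invoke finiteness of $\Lambda$ to conclude $\Lambda_p = \Lambda_0$.

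First I would quote Lemma \ref{ll6.1}, which already gives $\Lambda_p \subseteq \Lambda_0$ by producing, for each $\lambda \in \Lambda_p$, explicit $(C,s,S)$ and $(D,t,T)$ for which the bracket evaluates to the binomial coefficient $\binom{\lambda_a}{p^k}$, nonzero in characteristic $p$ by the choice of $k = k(p,\lambda)$ and $a$. So the only real work is the reverse bound $|\Lambda_0| \leq |\Lambda_p|$.

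For this, I would invoke Theorem \ref{t4.1} to identify $|\Lambda_0|$ with the number of isomorphism classes of irreducible $S_R(\mathcal{T}_r,k)$-modules. Then I would use the parameterization of irreducibles from \cite{May}, \cite{May4}, which assigns to each class a triple consisting of (i) a decomposition $r = s_m p^m + s_{m+1} p^{m+1} + \cdots + s_M p^M$ with $s_m > 0$, (ii) $p$-restricted partitions of each $s_i$ for $m < i \leq M$, and (iii) a $p$-restricted partition of some integer $s_m'$ with $1 \leq s_m' \leq s_m$. The key step is to use the iterated "decomposition" construction detailed just before the theorem: starting from any partition $\lambda$ one repeatedly splits off the columns whose count is a multiple of $p^{k(\lambda)}$, producing the data $(s_i)$ and the $p$-restricted partitions. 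I would observe that this decomposition is a bijection between partitions $\lambda$ and such data-tuples (the inverse being "replace each box by a row of $p^i$ boxes and concatenate"), and that under this bijection a given irreducible's data $(s_i)_{i \geq m}$ together with a $p$-restricted partition of $s_m'$ corresponds uniquely to a partition $\lambda$ of $r - (s_m - s_m')p^m$ satisfying $k(p,\lambda) = m$. Since $p^m = p^{k(p,\lambda)}$ then divides $r - i(\lambda) = (s_m - s_m')p^m$, this $\lambda$ lies in $\Lambda_p$.

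Thus the map from isomorphism classes of irreducibles to $\Lambda_p$ is well-defined and injective, giving $|\Lambda_0| \leq |\Lambda_p|$. Combined with $\Lambda_p \subseteq \Lambda_0$ from Lemma \ref{ll6.1} and finiteness of $\Lambda$, this forces $\Lambda_p = \Lambda_0$. The main conceptual obstacle, already resolved in the preceding text, is verifying that the column-splitting decomposition is bijective and sends the invariant $k(p,\lambda)$ to the lowest exponent $m$ appearing in the data; everything else is bookkeeping. Since no new calculation is required, the theorem proof itself can be quite short, essentially just pointing back to the construction and to Lemma \ref{ll6.1}.
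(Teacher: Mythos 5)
Your proposal is correct and follows the paper's own argument essentially verbatim: Lemma \ref{ll6.1} gives $\Lambda_p \subseteq \Lambda_0$, Theorem \ref{t4.1} identifies $\#\Lambda_0$ with the number of isomorphism classes of irreducibles, and the column-splitting decomposition construction together with the parameterization from \cite{May} and \cite{May4} injects the set of data-tuples into $\Lambda_p$, yielding $\#\Lambda_0 \leqslant \#\Lambda_p$ and hence equality. No substantive differences from the paper's proof.
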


\begin{corollary} \label{cc6.1}
  If $k$ is a field of characteristic $p$ and $r = ap^l $ for $1 \leqslant a < p$ and some $l = 0,1,2, \cdots $, then $S_R \left( {\mathcal{T} _r ,k} \right)$ is quasi-hereditary.
\end{corollary}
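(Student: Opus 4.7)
The plan is to deduce the corollary directly from the theorems already established, by showing that the hypothesis $r = ap^l$ with $1 \leqslant a < p$ forces $\Lambda_p = \Lambda$. By Theorem~\ref{tt6.2} we then get $\Lambda_0 = \Lambda$, and Corollary~\ref{c6.1} (applied to the cell algebra $S_R\left( {\mathcal{T}_r, k} \right)$) gives the quasi-hereditary conclusion. So the real content is the set-theoretic inclusion $\Lambda \subseteq \Lambda_p$ under the stated numerical hypothesis.

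To prove $\Lambda \subseteq \Lambda_p$, I would take an arbitrary partition $\lambda \in \Lambda$ of index $i = {\text{index}}(\lambda) \leqslant r$ and put $k = k\left( {p, \lambda } \right)$. By definition $p^k$ divides every $\lambda_j$; since each $\lambda_j > 0$, this gives $\lambda_j \geqslant p^k$ and hence $p^k \leqslant i \leqslant r = a p^l$. Because $a < p$, the inequality $p^k \leqslant a p^l$ forces $k \leqslant l$. Indeed if $k \geqslant l+1$, then $p^{l+1} \leqslant a p^l$, i.e.\ $p \leqslant a$, contradicting $a < p$.

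Now $k \leqslant l$ implies $p^k$ divides $p^l$, and therefore divides $r = a p^l$. On the other hand $p^k$ divides each $\lambda_j$, hence divides $i = \sum_j \lambda_j$. Consequently $p^k$ divides $r - i$, which is exactly the condition for $\lambda \in \Lambda_p$. Thus $\Lambda \subseteq \Lambda_p \subseteq \Lambda$, giving $\Lambda_p = \Lambda$.

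Combining this with Theorem~\ref{tt6.2}, which identifies $\Lambda_0 = \Lambda_p$ for $S_R\left( {\mathcal{T}_r, k} \right)$, we obtain $\Lambda_0 = \Lambda$, and Corollary~\ref{c6.1} then yields that $S_R\left( {\mathcal{T}_r, k} \right)$ is quasi-hereditary. There is no real obstacle here: the argument is a one-line divisibility check combined with the previously proven identification of $\Lambda_0$. The only point that requires any care is distinguishing $p^k \leqslant i$ from $p^k \mid i$; both follow immediately from the fact that $p^k$ divides every positive part of $\lambda$, but it is the divisibility that is used to conclude $p^k \mid r - i$.
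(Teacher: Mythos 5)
Your proposal is correct and follows essentially the same route as the paper: reduce to showing $\Lambda \subseteq \Lambda_p$, use that $p^{k(p,\lambda)}$ divides $i = \sum_j \lambda_j$ together with $i \leqslant r = ap^l < p^{l+1}$ to get $k(p,\lambda) \leqslant l$, conclude $p^{k(p,\lambda)} \mid r - i$, and then invoke Theorem~\ref{tt6.2} and Corollary~\ref{c6.1}. The only cosmetic difference is that you derive $k \leqslant l$ from $p^k \leqslant i$ via the size of a single part, while the paper writes $i = bp^k$ and compares $bp^k$ with $p^{l+1}$; the substance is identical.
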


\begin{proof}
By corollary \ref{c6.1}, we must show that $\Lambda  = \Lambda _0 $ , that is, that any $\lambda  \in \Lambda $ is actually in $\Lambda _p  = \Lambda _0 $.  So suppose  $\lambda $ is a partition of $i$ for some $0 < i \leqslant r$.  Put $k = k\left( {p,\lambda } \right)$.  Since $p^k $ divides $\lambda _j $ for every $j$,  $p^k $ divides $i = \sum\limits_j {\lambda _j } $, say $i = bp^k $ for some $b > 0$.  Now $bp^k  = i \leqslant r = ap^l  < p^{l + 1} $ (since $a < p$), so $k \leqslant l$.  Then $r - i = ap^l  - bp^k  = \left( {ap^{l - k}  - b} \right)p^k $, so $p^k $ divides $r - i$ and $\lambda  \in \Lambda _p $ as desired.
\end{proof}

Now consider $S_L \left( {\mathcal{T} _r ,k} \right)$ for characteristic $p$.  Define 
\[\Lambda _{L,p}  = \left\{ {\lambda  \in \Lambda :p{\text{ does not divide }}\lambda _j {\text{ for at least one }}j} \right\} \cup \Lambda \left( r \right).\]
  
\begin{lemma} \label{ll6.2}
 For a field $k$ of characteristic $p$ and the cell algebra $S_L \left( {\mathcal{T} _r ,k} \right)$ , $\Lambda _{L,p}  \subseteq \Lambda _0 $.
\end{lemma}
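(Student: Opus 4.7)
Plan: Given $\lambda \in \Lambda_{L,p}$, the goal is to exhibit basis elements whose bracket is nonzero in $S_L(\mathcal{T}_r,k)$. The argument splits according to the two clauses in the definition of $\Lambda_{L,p}$, and in each case I will explicitly construct $\mu$, $C$, $D$, $S$, $T$ and compute $b *_L b$ for $b = {}_{(O(C,\mu),S)}C_{(O(D,\mu),T)}^\lambda$.

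Case 1: Some $\lambda_{j_0}$ is not divisible by $p$. Here I mimic the construction of Lemma~\ref{ll6.1} and Theorem~\ref{tt6.1}, but choose the ``active'' row to be $j_0$ rather than the last nonzero row $k$. Let $\mu$ be the composition of $r$ with $\mu_j = \lambda_j$ for $j \leqslant k$ and $\mu_j = 1$ for $k < j \leqslant k+(r-i)$, where $k$ is the last nonzero row of $\lambda$. Take $C = \{1,2,\ldots,i\}$, so $\mu(C) = \lambda$, $\#O(C,\mu) = 1$, and $\phi_C = id$. Now pick $D \in D(\mathcal{T}_r,i,r)$ that combines one fixed element $m_{j_0}$ of block $j_0$ of $\mu$ with the $r-i$ extra elements $i+1,\ldots,r$, leaving the remaining $i-1$ elements of $\bar r$ as singletons. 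The orbit $O(D,\mu)$ then has size $\mu_{j_0} = \lambda_{j_0}$, and for each $D_a$ in the orbit the composite $\psi_{D_a} \circ \phi_C$ is a cyclic permutation lying inside the $j_0$-block of $\mathfrak{S}_\lambda \subseteq \mathfrak{S}_i$. Let $S$ (resp.\ $T$) be the obvious semistandard $\lambda$-tableau of type $\mu(C)$ (resp.\ $\mu(D)$), and take $s = t = id$.

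Because each $\sigma_a := \psi_{D_a} \circ \phi_C$ lies in $\mathfrak{S}_\lambda$, we have $r_\lambda \sigma_a r_\lambda = r_\lambda \cdot r_\lambda = o(\mathfrak{S}_\lambda)\, r_\lambda$, and the computation of Theorem~\ref{tt6.1} adapts verbatim to yield $b \cdot b = \lambda_{j_0} \cdot o(\mathfrak{S}_\lambda) \cdot b \bmod \hat{\bar A}^\lambda$. Since $n_L(b) = \#O(C,\mu) \cdot o(\mathfrak{S}_{\mu(C)}) = o(\mathfrak{S}_\lambda)$, Lemma~\ref{l10.5} gives $b *_L b = \lambda_{j_0}\cdot b$, and the bracket equals $\lambda_{j_0} \neq 0$ in $k$, so $\lambda \in \Lambda_0$.

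Case 2: $\lambda \in \Lambda(r)$, i.e., ${\text{index}}(\lambda) = r$. Here $r-i = 0$, so there are no ``extra'' rows. Take $\mu = \nu = \lambda$, $C = \bar r$, and $D = \{\{1\},\{2\},\ldots,\{r\}\}$. Then $\phi_C$ and $\psi_D$ are identities, $\mu(C) = \mu(D) = \lambda$, both orbits $O(C,\mu)$ and $O(D,\nu)$ are singletons, and $\psi_D \circ \phi_C = id$. The basis element $b$ thus lies in the index-$r$ (permutation) component of $S_L(\mathcal{T}_r,k)$, which is naturally identified with the classical Schur algebra $S_k(r,n)$. Lemma~\ref{ll3.2}(b), applied inside this component with $\pi = id$, reduces the bracket to the Schur-algebra bracket $\langle C_T^\lambda, {}_SC^\lambda\rangle_r$, which is nonzero for every partition $\lambda$ of $r$ when $n \geqslant r$ by the standard cellular theory of $S_k(r,n)$. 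Hence $\lambda \in \Lambda_0$.

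The main obstacle is the bookkeeping in Case 1: constructing $D$ so that its $\mathfrak{S}_\mu$-orbit has size exactly $\lambda_{j_0}$ while every $\psi_{D_a} \circ \phi_C$ is a permutation inside the $j_0$-block of $\mathfrak{S}_\lambda$, and then tracking the $n_L$ factors carefully so that the final coefficient reduces to $\lambda_{j_0}$ rather than some other multiple. Once the construction is in place, the computation is a direct rewriting of the one in Theorem~\ref{tt6.1}, and Case 2 follows cleanly by reduction to the classical Schur algebra.
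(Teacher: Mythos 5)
Your Case~1 is essentially sound, and it is a genuinely different (and valid) construction from the one in the paper: the paper's proof of Lemma~\ref{ll6.2} splits one box off the chosen row (so that $\mathfrak{S}_{\mu(C)}$ has index $\lambda_a$ in $\mathfrak{S}_\lambda$ and the orbit $O(D,\mu)$ is a singleton), the factor $\lambda_a$ then emerging from the $n_L$ normalization, whereas you keep $\mu(C)=\lambda$ and harvest the factor $\lambda_{j_0}$ from an orbit $O(D,\mu)$ of size $\lambda_{j_0}$, exactly as in the characteristic-zero computation of Theorem~\ref{tt6.1} but with the active row chosen so that $p$ does not divide $\lambda_{j_0}$. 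Both routes give bracket $\lambda_{j_0}$ (resp.\ $\lambda_a$), nonzero in $k$. One small caveat: your orbit-size claim fails in the degenerate subcase $\text{index}(\lambda)=r$, where there are no extra points $i+1,\dots,r$ to merge and the orbit is a singleton; you should either exclude that subcase from Case~1 (it is covered by Case~2) or note that the computation still gives a nonzero bracket there.

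The genuine gap is in Case~2. Lemma~\ref{ll3.2} is a statement about the monoid algebra $k[M]$ of Section~3: it compares the bracket of $k[M]$ with the bracket $\left\langle -,- \right\rangle_i$ of the cellular algebra $k[\mathfrak{S}_i]$. It says nothing about the bracket of $S_L(\mathcal{T}_r,k)$, so ``applying it inside the index-$r$ component'' presupposes exactly what you have not established: that the index-$r$ layer of $A_L^{\mathbb{Z}}$, with the $*_L$ product, is identified with the classical Schur algebra as an algebra, compatibly with the semistandard cell data and hence with the brackets. The paper identifies each summand ${}^{\mu(O_\mu)}B^{\nu(O_\nu)}$ with a summand of $S_{\mathbb{Z}}(r,n)$ only as a $\mathbb{Z}$-module, in order to transport the semistandard basis; the multiplicative identification is a substantive extra claim (essentially the double-coset description of the Schur algebra from the earlier work) and is nowhere proved in this paper. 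Note also that a reduction to the bracket of $k[\mathfrak{S}_r]$ (which is what the notation $\left\langle -,- \right\rangle_r$ would mean here) would not suffice, since that form vanishes identically for many partitions in characteristic $p$; so the citation really must be to the Schur algebra, which Lemma~\ref{ll3.2} does not reach. The repair is immediate and needs no identification at all: with your own data $\mu=\nu=\lambda$, $C=\bar r$, $D=\{\{1\},\dots,\{r\}\}$, $S=T$ superstandard and $s=t=id$, one has $\psi_D\circ\phi_C=id$, hence $b\cdot b = o(\mathfrak{S}_\lambda)\,b$; since $n_L(b)=\#O(C,\mu)\cdot o(\mathfrak{S}_{\mu(C)})=o(\mathfrak{S}_\lambda)$, Lemma~\ref{l10.5} gives $b*_Lb=b$, so the bracket equals $1\neq 0$ --- which is precisely the paper's argument for this case.
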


\begin{proof}
  First suppose $\lambda  \in \Lambda (r)$, a partition of maximal index $r$.  Then take $\mu  = \lambda $ as a composition of $r$ and let $C = \left\{ {1, \cdots ,r} \right\}$.   Then $\mu \left( C \right) = \mu  = \lambda $ and a semi-standard $\lambda $-tableau $S$ of type $\mu \left( C \right)$ is given by $S_{j,l}  = j\,,\,1 \leqslant l \leqslant j$.   There is only one standard $\lambda $-tableau of type $S$, namely $s = id$, the identity in $\mathfrak{S}_i $.  The orbit $O\left( {C,\mu } \right) = \left\{ C \right\}$, so $\# O\left( {C,\mu } \right) = 1$.  Also, $\phi _C :\bar r \to \bar r$ is the identity $\phi _C (j) = j\,,\,j = 1,2, \cdots ,r$.

     Define $D \in D\left( {r,\tau _r ,r} \right)$ by $D = \left\{ {\left\{ 1 \right\},\left\{ 2 \right\}, \cdots ,\left\{ r \right\}} \right\}$.  As a composition of $r$, $\mu \left( D \right) = \mu \left( C \right) = \lambda $, so we can take $T = S$ as a semistandard $\lambda $
-tableau of type $\mu \left( D \right)$.  Then again there is only one standard $\lambda $-tableau of type $T$, namely $t = id$, the identity in $\mathfrak{S}_r $.  

     We have $O\left( {D,\mu } \right) = \left\{ D \right\}$, $\# O\left( {D,\mu } \right) = 1$,  $\psi _D \left( j \right) = j\,,\,1 \leqslant j \leqslant r$.  Then $\psi _D  \circ \phi _C  = id:\bar r \to \bar r$.  We have $\,_S C_T^\lambda   = \,_s C_t^\lambda   = id \cdot r_\lambda   \cdot id = r_\lambda  $, so $\,_S C_T^\lambda   \circ \psi _D  \circ \phi _C  \circ \,_S C_T^\lambda   = r_\lambda   \cdot id \cdot r_\lambda   = r_\lambda   \cdot r_\lambda   = o\left( {\mathfrak{S}_\lambda  } \right)\,r_\lambda  $.  Then writing $b = _{O\left( {C,\mu } \right),S} C_{O(D,\mu ),T}^\lambda   = \phi _C  \cdot \,_S C_T^\lambda   \cdot \psi _D $, compute  $b \cdot b = \phi _C  \cdot \,_S C_T^\lambda   \cdot \psi _D  \cdot \phi _C  \cdot \,_S C_T^\lambda   \cdot \psi _D  = \phi _C  \cdot o\left( {\mathfrak{S}_\lambda  } \right) \cdot r_\lambda   \cdot \psi _D  = o\left( {\mathfrak{S}_\lambda  } \right) \cdot b$.  Then $b * _L b = o\left( {\mathfrak{S}_\lambda  } \right) \cdot \frac{{n_L (b)}}
{{n_L \left( b \right)n_L (b)}} \cdot b = o\left( {\mathfrak{S}_\lambda  } \right) \cdot \frac{1}
{{n_L \left( b \right)}} \cdot b = \frac{{o\left( {\mathfrak{S}_\lambda  } \right)}}
{{\# O\left( {C,\mu } \right)o\left( {\mathfrak{S}_{\mu \left( C \right)} } \right)}} \cdot b$ where $n_L (b) = n_L \left( {O\left( {C,\mu } \right)} \right) = \# O\left( {C,\mu } \right) \cdot o(\mathfrak{S}_{\mu (C)} )$.  Since $\# O\left( {C,\mu } \right) = 1$ and $o\left( {\mathfrak{S}_{\mu \left( C \right)} } \right) = o\left( {\mathfrak{S}_\lambda  } \right)$, we get $b * _L b = b$.  Computing the bracket gives $\left\langle {C_{O(D,\mu ),T}^\lambda  \,,\,\,_{O\left( {C,\mu } \right),S} C_{}^\lambda  } \right\rangle  = 1 \ne 0$.  So the bracket is not identically zero in $S_L \left( {\tau _r ,k} \right)$  and $\lambda  \in \Lambda _0 $ as desired.

     Now take any $\lambda  \in \Lambda _{L,p} $ of index $i < r$.  Let $m$ be the lowest nonzero row of $\lambda $, that is, assume $\lambda _m  > 0\,,\,\lambda _j  = 0{\text{ for }}j > m$.  Let $a$ be the largest integer such that $p$ does not divide $\lambda _a $.  Define a composition $\mu $ of $r$ by splitting off the last element of row $a$ of $\lambda $ and also adding $r - i$ additional rows of length 1:
\[ \mu _j  = 
    \begin{cases}
     \lambda _j            &\text{if $j < a$ }  \\
     \lambda _a  - 1       &\text{if $j = a$ }  \\
     1                     &\text{if $j = a + 1$ }  \\
     \lambda _{j - 1}      &\text{if $a + 2 \leqslant j \leqslant m + 1$ }  \\
     1                    &\text{if $m + 2 \leqslant j \leqslant \left({m + 1}\right)+(r-i)$ .}
   \end{cases}
\]

     Let $C = \left\{ {1,2, \cdots ,i} \right\}$.  Then a composition of $i$ is given by $\mu \left( C \right)_j  = \mu _j \,,\,1 \leqslant j \leqslant m + 1$.  Let $S$ be the semistandard $\lambda $-tableau of type $\mu \left( C \right)$ where
\[ S_{j,l}  =
      \begin{cases}
       j                &\text{for $1 \leqslant l \leqslant \lambda _j \,,\,j < a$ }  \\
       a                &\text{for $1 \leqslant l \leqslant \lambda _a  - 1\,,\,j = a$ }  \\
       a + 1            &\text{for $l = \lambda _a \,,\,j = a$ }  \\
       j + 1        &\text{for $1 \leqslant l \leqslant \lambda _j \,,\,a < j \leqslant m$ .}  
       \end{cases}
\]
There is only one standard $\lambda $-tableau of type $S$, namely $s = id$, the identity in $\mathfrak{S}_i $.  The orbit $O\left( {C,\mu } \right) = \left\{ C \right\}$, so $\# O\left( {C,\mu } \right) = 1$.  Also, $\phi _C :\bar i \to \bar r$ is the identity $\phi _C (j) = j\,,\,j = 1,2, \cdots ,i$.

     Now define $D \in D\left( {i,\tau _r ,r} \right)$ as follows:  Let $x$ be the last element in row $\lambda _a $, that is, $x = \lambda _1  + \lambda _2  +  \cdots  + \lambda _a $.  $D$ contains $i - 1$ single elements sets, one set $\left\{ l \right\}$ for every $1 \leqslant l \leqslant i$ except $l = x$.  $D$ also contains one set with $r - i + 1$
 elements: $\left\{ {x,i + 1,i + 2, \cdots ,r} \right\}$.  As a composition of $i$, $\mu \left( D \right) = \mu \left( C \right)$, so we can take $T = S$ as a semistandard $\lambda $-tableau of type $\mu \left( D \right)$.  Then again there is only one standard $\lambda $
-tableau of type $T$, namely $t = id$, the identity in $\mathfrak{S}_i $.  

     We have $O\left( {D,\mu } \right) = \left\{ D \right\}$, $\# O\left( {D,\mu } \right) = 1$,  $\psi _D \left( j \right) =
     \begin{cases}
        j     &\text{if $1 \leqslant j \leqslant i$ }  \\
        x     &\text{if $j > i$ }
      \end{cases}.$  Then $\psi _D  \circ \phi _C  = id:\bar i \to \bar i$.  We have $\,_S C_T^\lambda   = \,_s C_t^\lambda   = id \cdot r_\lambda   \cdot id = r_\lambda  $, so $\,_S C_T^\lambda   \circ \psi _D  \circ \phi _C  \circ \,_S C_T^\lambda   = r_\lambda   \cdot id \cdot r_\lambda   = r_\lambda   \cdot r_\lambda   = o\left( {\mathfrak{S}_\lambda  } \right)\,r_\lambda  $.  Then writing $b = _{O\left( {C,\mu } \right),S} C_{O(D,\mu ),T}^\lambda   = \phi _C  \cdot \,_S C_T^\lambda   \cdot \psi _D ,$
 compute
\[
\begin{aligned}
 b \cdot b &= \phi _C  \cdot \,_S C_T^\lambda   \cdot \psi _D  \cdot \phi _C  \cdot \,_S C_T^\lambda   \cdot \psi _D \\
            &= \phi _C  \cdot o\left( {\mathfrak{S}_\lambda  } \right) \cdot r_\lambda   \cdot \psi _D  \\
             &= o\left( {\mathfrak{S}_\lambda  } \right) \cdot b .
\end{aligned}
\]
  Then \[b * _L b = o\left( {\mathfrak{S}_\lambda  } \right) \cdot \frac{{n_L (b)}}
{{n_L \left( b \right)n_L (b)}} \cdot b = o\left( {\mathfrak{S}_\lambda  } \right) \cdot \frac{1}
{{n_L \left( b \right)}} \cdot b = \frac{{o\left( {\mathfrak{S}_\lambda  } \right)}}
{{\# O\left( {C,\mu } \right)o\left( {\mathfrak{S}_{\mu \left( C \right)} } \right)}} \cdot b \] where $n_L (b) = n_L \left( {O\left( {C,\mu } \right)} \right) = \# O\left( {C,\mu } \right) \cdot o(\mathfrak{S}_{\mu (C)} )$.  Since $\# O\left( {C,\mu } \right) = 1$ and $\frac{{o\left( {\mathfrak{S}_\lambda  } \right)}}
{{o\left( {\mathfrak{S}_{\mu (C)} } \right)}} = \frac{{\lambda _a !}}
{{\left( {\lambda _a  - 1} \right)!1!}} = \lambda _a $, we get $b * _L b = \lambda _a  \cdot b$.

    Computing the bracket gives $\left\langle {C_{O(D,\mu ),T}^\lambda  \,,\,\,_{O\left( {C,\mu } \right),S} C_{}^\lambda  } \right\rangle  = \lambda _a $.  Since $p$ does not divide $\lambda _a $ (by definition),  $\lambda _a  \ne 0$ in $k$.   So the bracket is not identically zero in $S_L \left( {\tau _r ,k} \right)$  and $\lambda  \in \Lambda _0 $ as desired.
\end{proof}

    We claim that $\Lambda _{L,p}  = \Lambda _0 $, that is, that every irreducible representation of  $S_L \left( {\mathcal{T} _r ,k} \right)$ corresponds to some $\lambda  \in \Lambda _{L,p} $.  \cite{May4} gives the following parameterization of the isomorphism classes of irreducible representations of $S_L \left( {\mathcal{T} _r ,k} \right)$.  There is one isomorphism class corresponding to the following set of data:  i) a set of nonnegative integers $s_m ,s_{m + 1} , \cdots ,s_M $ with $m \geqslant 0,\,s_m  > 0$ such that $r = s_m p^m  + s_{m + 1} p^{m + 1}  +  \cdots  + s_M p^M $, ii) a $p$-restricted partition of $s_i $ for each $m < i \leqslant M$,  iii) a $p$-restricted partition of $s_m $ if $m > 0$ or a $p$-restricted partition of $i$ for some integer $1 \leqslant i \leqslant s_0 $ if $m = 0$.  (The index of the corresponding irreducible is $r - (s_0  - i)$.)  We will show that each such set of data corresponds with a unique element $\lambda  \in \Lambda _{L,p} $, so that  the number of isomorphism classes is less than or equal to $\# \Lambda _{L,p} $.   We know that the number of isomorphism classes is $\# \Lambda _0 $ and by the lemma $\Lambda _{L,p}  \subseteq \Lambda _0 $, so we must have $\Lambda _{L,p}  = \Lambda _0 $ . 

     Consider first a set of data for the case $m > 0$.  Then we have $r = s_m p^m  + s_{m + 1} p^{m + 1}  +  \cdots  + s_M p^M $ and a $p$-restricted partition of $s_m $ for all $m$.  So by the construction preceding theorem \ref{tt6.2} there is a unique partition $\lambda $ of index $r$ corresponding to the data, and since the index of $\lambda $ is $r$ we have $\lambda  \in \Lambda _{L,p} $.  Next consider a set of data for the case $m = 0$.  Putting $s'_0  = i$ we have $s'_0  + s_1 p^1  +  \cdots  + s_M p^M  = r - (s_0  - s'_0 )$ with $p$-restricted partitions of $s'_0  = i$ and all $s_i ,\,\,i > 0$.  The result is a unique partition $\lambda $ of $r - (s_0  - i)$ with $k\left( {p,\lambda } \right) = m = 0$.  But $k\left( {p,\lambda } \right) = 0$ means that at least one row length $\lambda _j $ of $\lambda $ is not divisible by $p^{k\left( {p,\lambda } \right) + 1}  = p$, that is, that $\lambda  \in \Lambda _{L,p} $.  So corresponding to each set of data there is a unique element $\lambda  \in \Lambda _{L,p} $ as desired.  As remarked above, this proves the following theorem. 

\begin{thm} \label{tt6.3}
  If $k$ is a field of characteristic $p$, then $\Lambda _{L,p}  = \Lambda _0 $ in $S_L \left( {\mathcal{T} _r ,k} \right)$.
\end{thm}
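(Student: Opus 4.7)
The plan is to establish $\Lambda_{L,p} = \Lambda_0$ by combining the containment already proved in Lemma \ref{ll6.2} with a counting argument. By Lemma \ref{ll6.2} I have $\Lambda_{L,p}\subseteq \Lambda_0$, and by Theorem \ref{t4.1} the set $\Lambda_0$ is in bijection with the isomorphism classes of irreducible left $S_L(\mathcal{T}_r,k)$-modules. Hence it suffices to show $\#\Lambda_0 \le \#\Lambda_{L,p}$, which I will accomplish by constructing an injection from the known parameter set of irreducible modules (from \cite{May4}) into $\Lambda_{L,p}$.

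The main tool is the iterative decomposition of partitions introduced just before Theorem \ref{tt6.2}: given $\lambda$, one repeatedly peels off the columns whose $\Delta_i$'s contribute to $k(\lambda)$, obtaining a sequence of nonnegative integers $(s_m, s_{m+1}, \ldots, s_M)$ with $s_m > 0$ satisfying $|\lambda| = s_mp^m + \cdots + s_Mp^M$, together with a $p$-restricted partition of each $s_i$ for $m \le i \le M$. This construction is invertible: from such data one rebuilds $\lambda$ uniquely by replacing each box in the $p$-restricted partition of $s_i$ by a row of $p^i$ boxes and reassembling, and moreover the value $m$ produced by the decomposition coincides with $k(p,\lambda)$. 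I would use this invertible correspondence to map parameter data back to partitions.

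Next I would split the data from \cite{May4} into the two cases required by the parameterization. When $m > 0$, the data consists of $p$-restricted partitions of $s_i$ for every $m \le i \le M$, and the reconstruction yields a partition $\lambda$ of index exactly $r = s_mp^m + \cdots + s_Mp^M$; thus $\lambda \in \Lambda(r) \subseteq \Lambda_{L,p}$ by definition. When $m = 0$, the parameter $i$ ranges over $1 \le i \le s_0$; setting $s'_0 = i$ and applying the reconstruction to $(s'_0, s_1, \ldots, s_M)$ yields a partition $\lambda$ of $r - (s_0 - i)$ with $k(p,\lambda) = 0$, which means that $p$ fails to divide at least one $\lambda_j$, so $\lambda \in \Lambda_{L,p}$. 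In both cases, uniqueness of the decomposition guarantees that distinct data produce distinct partitions, giving the desired injection and hence $\#\Lambda_0 \le \#\Lambda_{L,p}$. Combined with Lemma \ref{ll6.2}, this forces $\Lambda_0 = \Lambda_{L,p}$.

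The main obstacle is purely bookkeeping: one has to reconcile the two different integers $k(\lambda)$ (used in the decomposition step) and $k(p,\lambda)$ (used in the definition of $\Lambda_{L,p}$), and then verify in the $m=0$ case that the reconstructed $\lambda$ genuinely has a row length not divisible by $p$, rather than merely being of index less than $r$. Once the construction's output is shown to land in $\Lambda_{L,p}$ and the map is checked to be injective, the theorem follows immediately from the inequality $\#\Lambda_0 \le \#\Lambda_{L,p} \le \#\Lambda_0$.
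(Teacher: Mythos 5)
Your proposal is correct and follows essentially the same route as the paper: Lemma \ref{ll6.2} for the containment $\Lambda_{L,p}\subseteq\Lambda_0$, then the inverse of the iterative partition decomposition (with the same $m>0$ versus $m=0$ case split, landing in $\Lambda(r)$ respectively in partitions with $k(p,\lambda)=0$) to inject the parameter data of \cite{May4} into $\Lambda_{L,p}$ and conclude by counting via Theorem \ref{t4.1}. The bookkeeping points you flag (that the reconstruction's $m$ equals $k(p,\lambda)$, and that $k(p,\lambda)=0$ forces some $\lambda_j$ prime to $p$) are exactly the observations the paper records, so nothing further is needed.
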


     Notice that if $p \geqslant r$ then $\Lambda _0  = \Lambda _{L,p}  = \Lambda $ and $S_L \left( {\mathcal{T} _r ,k} \right)$ is quasi-hereditary.  However, when $r > p$ we have $\Lambda _0  = \Lambda _{L,p}  \ne \Lambda $ and $S_L \left( {\mathcal{T} _r ,k} \right)$ is not quasi-hereditary for the given poset structure $\Lambda $.

     Now consider the case when $M$ contains the rook monoid $\Re _r $.  

\begin{thm}  \label{tt6.4}
  Assume $M$ contains the rook monoid $\Re _r $.  Then for any field $k$, $\Lambda _0  = \Lambda $ for  both cell algebras $S_L \left( {M,k} \right)$ and $S_R \left( {M,k} \right)$.  Both $S_L \left( {M,k} \right)$ and $S_R \left( {M,k} \right)$ are quasi-hereditary.
\end{thm}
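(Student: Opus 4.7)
The plan is to show $\Lambda_0 = \Lambda$ by exhibiting, for each $\lambda \in \Lambda$, a cell-basis element whose self-bracket equals $1 \in k$; quasi-hereditariness then follows from Corollary~\ref{c6.1}. The key advantage of the hypothesis $\Re_r \subseteq M$ is that the configuration $D := \{\{1\}, \{2\}, \ldots, \{i\}\}$ lies in $D(M,i,r)$, since it is realized by the partial permutation $j \mapsto j$ (for $j \le i$, else $0$) in $\Re_r$. Combined with $C := \{1, \ldots, i\}$ this forces $\psi_D \circ \phi_C = \mathrm{id}_{\bar i}$, which is precisely the bijectivity condition that made the computations in Lemmas~\ref{ll6.1} and~\ref{ll6.2} tractable, and which here will be available for every $\lambda$ regardless of the characteristic.

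Given $\lambda \in \Lambda(i)$ with $i > 0$, I would let $k$ be the last nonzero row of $\lambda$ and define $\mu \in \Lambda(r,n)$ by $\mu_j = \lambda_j$ for $j \le k$ and (if $i < r$) $\mu_{k+1} = r - i$. With $C$ as above, $\phi_C$ is the inclusion, $\mu(C) = \lambda$, and because $C$ equals $b_1^\mu \cup \cdots \cup b_k^\mu$ it is fixed by $\mathfrak{S}_\mu$, so $O(\mu, C) = \{C\}$. With $D$ as above, $\mathfrak{S}_\mu$ merely permutes singletons within each block, so $O(\mu, D) = \{D\}$, and the ordering convention of this section makes $\psi_D(j) = j$ for $j \le i$. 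Take $S$ and $T$ to be the canonical semistandard $\lambda$-tableaux of the appropriate types; the only standard tableau of either type is $s = t = \mathrm{id}$, so $_sC_t^\lambda = r_\lambda := \sum_{\sigma \in \mathfrak{S}_\lambda} \sigma$. Setting $b = \phi_C \circ r_\lambda \circ \psi_D$ (a single summand, since both orbits are singletons), direct calculation gives $b \cdot b = \phi_C \circ r_\lambda^2 \circ \psi_D = o(\mathfrak{S}_\lambda) \cdot b$ in $\bar A$. Applying Lemma~\ref{l10.5} with $n_L(b) = \#O(\mu,C) \cdot o(\mathfrak{S}_{\mu(C)}) = o(\mathfrak{S}_\lambda)$ and analogously $n_R(b) = o(\mathfrak{S}_\lambda)$ yields $b *_L b = b *_R b = b$, so by Lemma~\ref{l2.1}(c) the bracket is $1 \ne 0$ in $k$ and $\lambda \in \Lambda_0$.

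The exceptional case $\lambda_0 \in \Lambda(0)$ (present since $z \in \Re_r$) is immediate: the basis element $z$ satisfies $z \cdot z = z$ with $n_L(z) = n_R(z) = 1$, so the bracket is $1$. Thus $\Lambda_0 = \Lambda$ and Corollary~\ref{c6.1} completes the proof. The only real combinatorial check is that the ordering convention of this section actually produces $\psi_D = \mathrm{id}_{\bar i}$; by that convention singleton sets in block $b_j^\mu$ precede those in $b_{j+1}^\mu$ and within a block they are ordered by their integer label, so the enumeration of $D$ matches the natural one of $\{1, \ldots, i\}$. What makes the argument work uniformly in the characteristic of $k$---and thereby forces $\Lambda_0 = \Lambda$ rather than some proper subset---is that the rook hypothesis gives $\#O(\mu, C) = \#O(\mu, D) = 1$, so the factor $o(\mathfrak{S}_\lambda)$ appearing in $b \cdot b$ is cancelled exactly by $n_L(b) = n_R(b)$, with no residual binomial or integer coefficient to risk vanishing in positive characteristic.
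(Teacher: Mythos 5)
Your argument is correct and is essentially the paper's own proof: the same choices $C=\{1,\dots,i\}$ and $D=\{\{1\},\dots,\{i\}\}$ (realized in $M$ via the rook-monoid element), the same observation that $\psi_D\circ\phi_C=\mathrm{id}$, singleton orbits, $b\cdot b=o\left(\mathfrak{S}_\lambda\right)b$, and exact cancellation by $n_L(b)=n_R(b)=o\left(\mathfrak{S}_\lambda\right)$ giving bracket $1$ in any characteristic. The only cosmetic difference is that you pad $\mu$ with one block of size $r-i$ where the paper uses $r-i$ blocks of size $1$; since $\mu(C)=\mu(D)=\lambda$ and both orbits remain singletons either way, nothing changes.
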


Remark:  If $M$ is just the rook monoid, $M = \Re _r $, then $S_L \left( {M,k} \right)$ and $S_R \left( {M,k} \right)$ are both actually cellular algebras and are anti-isomorphic as algebras.
 
\begin{proof}
  Take any partition $\lambda $ of $i$ , $0 \leqslant i \leqslant r$.  We must show that $\lambda  \in \Lambda _0 $.  Let $m \geqslant 0$
 be the smallest integer such that  $\lambda _j  = 0{\text{ for }}j > m$
.  Define a composition $\mu $ of $r$ by adding $r - i$ rows of length 1 to $\lambda $.  So 
\[ \mu _j  =
     \begin{cases}
        \lambda _j     &\text{if $j \leqslant m$ }  \\
        1              &\text{if $m + 1 \leqslant j \leqslant m + r - i$ }  \\
        0              &\text{if $j > m + r - i$ .}
      \end{cases}
\]  
     Let $C = \left\{ {1,2, \cdots ,i} \right\}$.  Then $\mu \left( C \right) = \lambda $.  Let $S$ be the semistandard $\lambda $-tableau of type $\mu \left( C \right)$ where $S_{j,l}  = j\,,\,1 \leqslant l \leqslant \lambda _j \,,\,1 \leqslant j \leqslant m$.  There is only one standard $\lambda $-tableau of type $S$, namely $s = id$, the identity in $\mathfrak{S}_i $.  The orbit $O\left( {C,\mu } \right) = \left\{ C \right\}$, so $\# O\left( {C,\mu } \right) = 1$.  Also, $\phi _C :\bar i \to \bar r$ is the identity $\phi _C (j) = j\,,\,j = 1,2, \cdots ,i$.

     Define $D \in D\left( {i,r} \right)$ by $D = \left\{ {\left\{ j \right\}:1 \leqslant j \leqslant i} \right\}$.  $M$ contains the rook monoid $\Re _r $, so it contains the map $\alpha :\bar r \cup 0 \to \bar r \cup 0$ given by
      $\alpha \left( j \right) =
         \begin{cases}
           j         &\text{if $1 \leqslant j \leqslant i$ }  \\
           0         &\text{if $j > i$ }
         \end{cases} .$  Then $D = D_\alpha   \in D\left( {i,M,r} \right)$.  As a composition of $i$, $\mu \left( D \right) = \mu \left( C \right) = \lambda $, so we can take $T = S$ as a semistandard $\lambda $
-tableau of type $\mu \left( D \right)$.  Then again there is only one standard $\lambda $-tableau of type $T$, namely $t = id$, the identity in $\mathfrak{S}_i $.  

       We have $O\left( {D,\mu } \right) = \left\{ D \right\}$, $\# O\left( {D,\mu } \right) = 1$,  $\psi _D \left( j \right) =
            \begin{cases}
               j      &\text{if $1 \leqslant j \leqslant i$ }  \\
               0       &\text{if $j > i$ }
             \end{cases} .$  Then $\psi _D  \circ \phi _C  = id:\bar i \to \bar i$.
 
   We have $\,_S C_T^\lambda   = \,_s C_t^\lambda   = id \cdot r_\lambda   \cdot id = r_\lambda  $, so $\,_S C_T^\lambda   \circ \psi _D  \circ \phi _C  \circ \,_S C_T^\lambda   = r_\lambda   \cdot id \cdot r_\lambda   = r_\lambda   \cdot r_\lambda   = o\left( {\mathfrak{S}_\lambda  } \right)\,r_\lambda  $.  Then writing $b = _{O\left( {C,\mu } \right),S} C_{O(D,\mu ),T}^\lambda   = \phi _C  \cdot \,_S C_T^\lambda   \cdot \psi _D $, compute  $b \cdot b = \phi _C  \cdot \,_S C_T^\lambda   \cdot \psi _D  \cdot \phi _C  \cdot \,_S C_T^\lambda   \cdot \psi _D  = \phi _C  \cdot o\left( {\mathfrak{S}_\lambda  } \right) \cdot r_\lambda   \cdot \psi _D  = o\left( {\mathfrak{S}_\lambda  } \right) \cdot b$.

    Then $b * _L b = o\left( {\mathfrak{S}_\lambda  } \right) \cdot \frac{{n_L (b)}}
{{n_L \left( b \right)n_L (b)}} \cdot b = o\left( {\mathfrak{S}_\lambda  } \right) \cdot \frac{1}
{{n_L \left( b \right)}} \cdot b = \frac{{o\left( {\mathfrak{S}_\lambda  } \right)}}
{{\# O\left( {C,\mu } \right)o\left( {\mathfrak{S}_{\mu \left( C \right)} } \right)}} \cdot b$ where $n_L (b) = n_L \left( {O\left( {C,\mu } \right)} \right) = \# O\left( {C,\mu } \right) \cdot o(\mathfrak{S}_{\mu (C)} )$.  Since $\# O\left( {C,\mu } \right) = 1$ and $\mathfrak{S}_\lambda   = \mathfrak{S}_{\mu \left( C \right)} $, we get $b * _L b = b$.

    Computing the bracket in $S_L \left( {M,k} \right)$ gives $\left\langle {C_{O(D,\mu ),T}^\lambda  \,,\,\,_{O\left( {C,\mu } \right),S} C_{}^\lambda  } \right\rangle  = 1 \ne 0$.  So the bracket is not identically zero in $S_L \left( {M,k} \right)$  and $\lambda  \in \Lambda _0 $ as desired. 

      Similarly, $b * _R b = o\left( {\mathfrak{S}_\lambda  } \right) \cdot \frac{{n_R (b)}}
{{n_R \left( b \right)n_R (b)}} \cdot b = o\left( {\mathfrak{S}_\lambda  } \right) \cdot \frac{1}
{{n_R \left( b \right)}} \cdot b = \frac{{o\left( {\mathfrak{S}_\lambda  } \right)}}
{{\# O\left( {D,\mu } \right)o\left( {\mathfrak{S}_{\mu \left( D \right)} } \right)}} \cdot b$ where $n_R (b) = n_R \left( {O\left( {D,\mu } \right)} \right) = \# O\left( {D,\mu } \right) \cdot o(\mathfrak{S}_{\mu (D)} )$.  Since $\# O\left( {D,\mu } \right) = 1$ and $\mathfrak{S}_\lambda   = \mathfrak{S}_{\mu \left( D \right)} $, we get $b * _R b = b$.

    Computing the bracket in $S_R \left( {M,k} \right)$ again gives $\left\langle {C_{O(D,\mu ),T}^\lambda  \,,\,\,_{O\left( {C,\mu } \right),S} C_{}^\lambda  } \right\rangle  = 1 \ne 0$.  So the bracket is not identically zero in $S_R \left( {M,k} \right)$  and $\lambda  \in \Lambda _0 $ as desired. 

     By corollary \ref{c6.1}, cell algebras with $\Lambda _0  = \Lambda $ are quasi-hereditary.
\end{proof}

\end{document}